\documentclass[12pt]{article}
\usepackage[margin=1in]{geometry}
\usepackage[utf8]{inputenc}

\usepackage[english]{babel}

\usepackage[round]{natbib}
\usepackage{xcolor}

\usepackage{mathtools,amssymb,amsthm}
\usepackage{graphicx,color}
\usepackage{subcaption}
\usepackage{pifont}
\usepackage{booktabs}
\usepackage{microtype}
\usepackage{dsfont}

\usepackage{float}
\usepackage[boxed]{algorithm2e}

\usepackage[toc,page]{appendix}
\usepackage{hyperref}
\usepackage[capitalize]{cleveref}
\usepackage{bm}
\usepackage{enumitem}

\newcommand{\cN}{\mathcal{N}}

\newcommand{\cP}{\mathcal{P}}

\newcommand{\cT}{\mathcal{T}}

\newcommand{\PP}{\mathbb{P}}

\newcommand{\RR}{\mathbb{R}}

\newcommand{\R}{\RR}
\newcommand{\N}{\mathbb{N}}
\newcommand{\E}{\mathbb{E}}

\newcommand*{\kl}[3][]{%
\ifthenelse{\isempty{#1}}{\operatorname{D}(#2\,\|\,#3)}%
{\operatorname{D}(#2\,\|\,#3\mid#1)}%
}

\newcommand*{\triplenorm}[1]{{\left\vert\kern-0.25ex\left\vert\kern-0.25ex\left\vert #1
    \right\vert\kern-0.25ex\right\vert\kern-0.25ex\right\vert}}

\DeclareMathOperator{\Cov}{Cov}

\newcommand*{\defeq}{\coloneqq}
\newcommand*{\rd}{\mathrm{d}}
\newcommand*{\dd}{\, \rd}

\newcommand{\eps}{\varepsilon}

\renewcommand{\phi}{\varphi}

\newcommand{\mmid}{\,\Vert\,}
\newcommand{\deq}{\coloneqq}

\DeclareMathOperator*{\KL}{\mathsf{H}}
\DeclareMathOperator*{\FI}{\mathsf{FI}}

\newcommand*{\norm}[1]{\left\|#1\right\|}
\newcommand{\comp}{\mathsf c}

\renewcommand{\geq}{\geqslant}
\renewcommand{\ge}{\geqslant}
\renewcommand{\leq}{\leqslant}
\renewcommand{\le}{\leqslant}

\newcommand{\KM}{\mathsf T}
\newcommand{\mom}{\mathsf m_2}
\DeclareMathOperator{\id}{id}
\usepackage{eufrak}

\hypersetup{citecolor=purple, urlcolor=cyan, colorlinks=true, linkcolor=blue}

\theoremstyle{plain}
\newtheorem{theorem}{Theorem}[section]
\newtheorem{proposition}[theorem]{Proposition}

\newtheorem{lemma}[theorem]{Lemma}

\theoremstyle{definition}
\newtheorem{definition}[theorem]{Definition}

\theoremstyle{remark}
\newtheorem{remark}[theorem]{Remark}

\title{Near-Lipschitz stability of the 
Kim--Milman flow map}

\author{
Sinho Chewi\thanks{Department of Statistics and Data Science, Yale University. \tt{sinho.chewi@yale.edu}}\and 
Katharina Eichinger\thanks{ParMA, Inria Saclay and Laboratoire de mathématiques d’Orsay, Université Paris-Saclay. \tt katharina.eichinger@inria.fr}\and
Aram-Alexandre Pooladian\thanks{Institute for Foundations of Data Science, Yale University. \tt aram-alexandre.pooladian@yale.edu} 
}

\date{\today}

\begin{document}
\maketitle
\begin{abstract}
We prove that the Kim--Milman flow map enjoys favorable stability properties with respect to variations in the target measure, provided that one of the {target} measures is sufficiently regular. Our results include stability in relative entropy, and more notably, Lipschitz stability in the $2$-Wasserstein distance  up to a logarithmic factor. We complement our results with a general existence theorem for these maps for any target measure with finite second moment.
\end{abstract}

\section{Introduction}
Let $\mu$ be a probability measure over $\R^d$ and let $\gamma \deq \cN(0,I)$. The seminal work of~\cite{kim2012generalization} introduced a flow map $\mathsf T^{\mu}$ which transports $\gamma$ to $\mu$, i.e., for $X \sim \gamma$, $\mathsf T^{\mu}(X)\sim\mu$. Unlike other transport maps, such as optimal transport maps \citep{brenier1991polar} or the Schr\"odinger bridge \citep{Leo14Schrodinger}, the study of $\mathsf T^{\mu}$ does not typically proceed by viewing it as the solution to a variational problem
but instead leverages the theory of forward and reverse diffusions \citep{Fol85}; see Section~\ref{sec:construction} for more details on its construction. We henceforth call $\mathsf T^\mu$ the Kim--Milman flow map or simply Kim--Milman map.

The Kim--Milman map was developed as a means of generalizing Caffarelli's celebrated contraction theorem beyond the case in which $\mu$ is strongly log-concave \citep{caffarelli2000monotonicity}, and has remained a subject of intense investigation in recent years; see for example \cite{neeman2022lipschitz, klartag2023spectral, MikShe23HeatFlow, fathi2024transportation, BriPed25HeatFlow,conforti2025coupling, ge2025generalization, lopez2025bakry, serres2026contractive} for developments and applications in pure mathematics.

Whereas the works above mostly focus on regularity of $\mathsf T^\mu$ as a function of its input, here we focus on its \emph{stability} with respect to the targets: For two measures $\mu$ and $\nu$, can we accurately quantify the distance between the maps $\mathsf T^\mu$ and $\mathsf T^\nu$
by some (pseudo-)metric between the measures? This question is motivated, in part, by extensive work on the analogous question for optimal and entropic transport maps, which we review in Section~\ref{sec:related}.

Our main result is the following theorem, which states that if a Kim--Milman flow map is Lipschitz (with respect to its input), then it is also nearly Lipschitz stable with respect to variations in the target measure in the $2$-Wasserstein distance.
\begin{theorem}[Informal; see Theorem~\ref{thm:wass_stab}]\label{thm:intro}
Suppose that $\mu$ satisfies a suitable regularity condition, implying in particular that $\mathsf T^\mu$ is Lipschitz, and that $\nu \in \cP(\mathbb{R}^d)$ has a finite second moment. Then,
\begin{align*}
    \|\mathsf T^\mu - \mathsf T^\nu\|_{L^2(\gamma)} \le c(\mu)\, W_2(\mu,\nu) \log\Bigl(2 + \frac{c'(\mu)}{W_2(\mu,\nu)}\Bigr)\,, 
\end{align*}
where $c(\mu)$, $c'(\mu)$ are constants that depend only on $\mu$.
\end{theorem}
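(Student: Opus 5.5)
The plan is to realize both maps through the Föllmer/heat-flow construction and compare the two resulting ODEs along a synchronous coupling. Write $\mathsf T^\mu$ as the time-$\infty$ (or time-$1$, after reparametrization) endpoint of the flow $\dot x_t = v^\mu_t(x_t)$ started at $x_0 \sim \gamma$. Here $v^\mu_t$ is the velocity field of the reverse heat-flow interpolation between $\gamma$ and $\mu$. Concretely, $v^\mu_t(x)$ is an affine function of $x$ and of the posterior mean $\E[X \mid X_t = x]$ along an Ornstein--Uhlenbeck or heat semigroup path from $\mu$. Define $v^\nu_t$ the same way and run both flows from the same initial point $x_0$. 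Then
\[
\frac{\rd}{\rd t}\,|x^\mu_t - x^\nu_t| \le \|\nabla v^\mu_t\|_{\mathrm{op}}\, |x^\mu_t - x^\nu_t| + |v^\mu_t(x^\nu_t) - v^\nu_t(x^\nu_t)|\,.
\]
Gronwall in $L^2(\gamma)$ then gives
\[
\|\mathsf T^\mu - \mathsf T^\nu\|_{L^2(\gamma)} \le \int_0^\infty \exp\Bigl(\int_t^\infty \Lambda_s \dd s\Bigr)\, \|v^\mu_t - v^\nu_t\|_{L^2(\rho^\nu_t)} \dd t\,,
\]
where $\Lambda_s$ bounds $\nabla v^\mu_s$. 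Here $\rho^\nu_t$ is the law at time $t$ of the $\nu$-flow, which equals the heat-flow marginal of $\nu$. The regularity hypothesis on $\mu$ (the one making $\mathsf T^\mu$ Lipschitz, e.g.\ a covariance bound on the posteriors, or semi-log-concavity plus bounded support) is exactly what makes $\int_0^\infty \Lambda_s \dd s$ finite, so the exponential factor is a constant $c(\mu)$.

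The core is to bound the field discrepancy $\|v^\mu_t - v^\nu_t\|_{L^2(\rho^\nu_t)}$ at each time. I would take an optimal coupling $(X,Y)$ of $(\mu,\nu)$ and add the same Gaussian noise, so $X_t = e^{-t}X + \sqrt{1-e^{-2t}}\,Z$ and similarly $Y_t$. Write the posterior means as $m^\mu_t(x) = \E[X \mid X_t = x]$ and $m^\nu_t$ analogously. Using the triangle inequality through $m^\mu_t(Y_t)$:
\begin{itemize}
\item the term $\|m^\mu_t(X_t) - m^\mu_t(Y_t)\|$ is at most $\mathrm{Lip}(m^\mu_t)\, e^{-t} W_2$, where $\mathrm{Lip}(m^\mu_t)$ is a posterior covariance divided by the noise variance;
\item the term $\|m^\nu_t(Y_t) - m^\mu_t(Y_t)\|$ is controlled by $\E\|Y - m^\mu_t(Y_t)\|^2 - \E\|Y - m^\nu_t(Y_t)\|^2$, by the $L^2$ projection property;
\item $\|X - Y\|$ contributes at most $W_2$.
\end{itemize}
For small $t$ the noise variance $\sigma_t^2 \approx 2t$ is small, so the Lipschitz constant of $m^\mu_t$ blows up like $\sigma_t^{-2}$ times the posterior covariance. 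This yields a time-wise bound of the form $\min\{C(\mu)\,W_2/\sigma_t,\; C\}$ times the relevant factors. Alternatively one uses the crude bound $\|v^\mu_t - v^\nu_t\| \lesssim (\text{second moments})/\sigma_t$ near $t=0$.

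Integrating in time, I would split at a threshold $\tau$. On $[\tau,\infty)$ the Lipschitz bound gives $\int_\tau^{1} W_2\,\frac{\rd t}{t} \approx W_2 \log(1/\tau)$. On $[0,\tau]$ I would use the direct bound: both flows move little, so $|x^\mu_\tau - x^\nu_\tau|$ is $O(\sqrt\tau)$ times moment constants; better, use the time-$\tau$ endpoint comparison to pay $O(\tau \cdot \text{stuff})$. Choosing $\tau \asymp W_2^2$ (or $W_2$) gives $c\,W_2\log(2 + c'/W_2)$. The step I expect to be the main obstacle is the small-time regime. There the singularity of the velocity field near the target end produces exactly a $1/t$ rate, and one must show the contribution of $[0,\tau]$ is at most $O(W_2)$ rather than $O(\sqrt\tau)$. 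I would first try the cruder $\sqrt\tau$ bound with $\tau = W_2^2$, which already gives the claimed log. If that fails, I would try to control $\E[(X-Y)\cdot(\text{posterior differences})]$ via an entropy/Girsanov comparison of the path measures; that comparison would also produce the relative-entropy stability result mentioned in the abstract.
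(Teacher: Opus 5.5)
\textbf{The key estimate is missing.} Your architecture matches the paper's in two places: the synchronous coupling of the two reverse flows from the same Gaussian input, followed by Gr\"onwall, is the paper's inequality~\eqref{eq:stab_start}; and early stopping at $\tau\asymp W_2^2$ with the crude bound $\int_0^\tau\bigl(\sqrt{\FI(q_t^\mu\mmid\gamma)}+\sqrt{\FI(q_t^\nu\mmid\gamma)}\bigr)\,\mathrm{d}t\lesssim M\tau+\sqrt{d\tau}$ is Proposition~\ref{prop:ineq_2} together with the choice of $\zeta$. By Tweedie, your field discrepancy is exactly
$\|v_t^\mu-v_t^\nu\|_{L^2(q_t^\nu)}=\sqrt{\FI(q_t^\nu\mmid q_t^\mu)}=\frac{e^{-t}}{1-e^{-2t}}\,\|m_t^\mu-m_t^\nu\|_{L^2(q_t^\nu)}$.
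So the whole result hinges on showing $\|m_t^\mu-m_t^\nu\|_{L^2(q_t^\nu)}\lesssim W_2$ uniformly as $t\searrow 0$, and your argument does not deliver this.

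The projection property gives the identity $D_t^2:=\E\|m_t^\nu(Y_t)-m_t^\mu(Y_t)\|^2=\E\|Y-m_t^\mu(Y_t)\|^2-\mathrm{mmse}_\nu(t)$. Your triangle inequality gives $\|Y-m_t^\mu(Y_t)\|_{L^2}\le\sqrt{\mathrm{mmse}_\mu(t)}+KW_2$ with $K=1+e^{-t}\mathrm{Lip}(m_t^\mu)$. Squaring leaves
$D_t^2\le\mathrm{mmse}_\mu(t)-\mathrm{mmse}_\nu(t)+2KW_2\sqrt{\mathrm{mmse}_\mu(t)}+K^2W_2^2$.
You have no tool to lower-bound $\mathrm{mmse}_\nu$, since $\nu$ has no regularity. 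Even granting it, the cross term is first order in $W_2$. Since $\mathrm{mmse}_\mu(t)$ is of order $t$ (e.g.\ $\mathrm{mmse}_\gamma(t)=d(1-e^{-2t})$), on $[\tau,1]$ this gives only $D_t\lesssim\sqrt{W_2}\,t^{1/4}$. The field discrepancy is then $\lesssim\sqrt{W_2}\,t^{-3/4}$, which integrates to $O(\sqrt{W_2})$: H\"older-$\frac12$ stability, not near-Lipschitz. A $W_2$-linear bound needs the excess risk $\E_\nu\|Y-m_t^\mu(Y_t)\|^2-\mathrm{mmse}_\nu(t)$ to vanish to \emph{second} order at $\nu=\mu$, and a triangle inequality cannot see that cancellation. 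Your claimed time-wise bound $\min\{C(\mu)W_2/\sigma_t,C\}$ is therefore unsupported.

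\textbf{What the paper does instead.} It uses the regularization of relative Fisher information along the OU flow, in three steps:
(i) the log-Harnack inequality $\mathsf H(q_s^\nu\mmid q_s^\mu)\le W_2^2/(2(e^{2s}-1))$;
(ii) under~\eqref{theta}, $\frac{\mathrm{d}}{\mathrm{d}t}\FI(q_t^\nu\mmid q_t^\mu)\le-2(1-2\theta_t)\FI(q_t^\nu\mmid q_t^\mu)$, so the Fisher information at time $t$ is bounded, up to the factor $e^{4\mathsf C_\mu}$, by its value at any $s\in[t/2,t]$;
(iii) de Bruijn's identity, $\int_{t/2}^t\FI(q_s^\nu\mmid q_s^\mu)\,\mathrm{d}s\le\mathsf H(q_{t/2}^\nu\mmid q_{t/2}^\mu)$.
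Together these give $\FI(q_t^\nu\mmid q_t^\mu)\le e^{4\mathsf C_\mu}W_2^2/(e^t-1)^2$, which is the $W_2/t$ rate with exponential decay at large $t$. Your fallback (an entropy/Girsanov comparison) points toward (i). But an entropy bound only controls the \emph{time-integrated} Fisher information. Turning it into a pointwise-in-time bound requires the monotonicity step (ii), which uses~\eqref{theta} for $\mu$ a second time; your plan has no counterpart to it.

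\textbf{Two smaller points.} First, your Gr\"onwall step should use the one-sided bound $\nabla^2\log(q_t^\mu/\gamma)\preceq\theta_t I$, not $\|\nabla v_t^\mu\|_{\rm op}$. The universal lower bound $-\frac{e^{-2t}}{1-e^{-2t}}I$ is not integrable at $0$, so the operator norm need not have finite integral. Second, since $\nu$ is an arbitrary element of $\cP_2(\R^d)$, the existence and ODE representation of $\mathsf T^\nu$ are not given. They must be obtained by approximation, as the paper does with regular approximating sequences.
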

The regularity condition assumed in Theorem~\ref{thm:intro} is ubiquitous in the literature, because it implies the Lipschitzness of the map $\KM^\mu$; we formally state the assumption later as the condition \eqref{theta}.
Thus, our theorem shows that this same condition suffices for quantitative stability bounds as well.

We mention here that this result is perhaps surprising, since optimal transport maps are at most $\frac{1}{3}$-H\"older stable {w.r.t. the $2$-Wasserstein distance} in general (see Section~\ref{sec:related}). 

Our proof uses the flow map interpretation of the Kim{--}Milman map and reduces the stability bound to estimates for the relative Fisher information along the Ornstein--Uhlenbeck process. For large times, we combine the de Bruijn identity with a log-Harnack inequality to bound this in terms of the initial Wasserstein distance between the measures. For short times, we use an alternative bound which has better time dependence, but which does not depend on the initial Wasserstein distance. Balancing these two regimes yields our result. We believe the logarithmic factor is an artifact of our proof technique, although Section~\ref{sec:lower_bound} discusses how certain estimates in our proof cannot be improved. 

As a final contribution, we prove that Kim--Milman flow maps exist for any probability measure with finite second moment; see Lemma~\ref{lem:existence_km}. This result follows from a purely technical argument that exploits the regularizing property of the relative Fisher information.

\subsubsection*{Acknowledgments}
KE acknowledges the support of the Agence nationale de la recherche, through the PEPR PDE-AI project (ANR-23-PEIA-0004). AAP thanks the Institute for Foundations of Data Science at Yale University for financial support. 

\section{Notation, preliminaries, and related work}
We let $\cP_2(\R^d)$ denote the space of probability measures on $\R^d$ with finite second moment, and let $\cP_{2,\rm ac}(\R^d)$ be the subset of absolutely continuous measures. For two measures $P,Q \in \cP_2(\R^d)$, we write the relative entropy between them as 
\begin{align*}
{\mathsf H}(P\mmid Q) \defeq \int \log \bigl(\frac{\dd P}{\dd Q}\bigr) \dd P\,,
\end{align*}
whenever $P \ll Q$, with  ${\mathsf H}(P\mmid Q) = +\infty$ otherwise. We similarly define the relative Fisher information between $P$ and $Q$ as
\begin{align*}
\FI(P\mmid Q) \defeq \E_P\|\nabla \log P - \nabla \log Q\|^2\,,
\end{align*}
whenever this expression makes sense; in this paper, we will only ever consider the relative Fisher information between $C^\infty$ densities, due to the regularization along the Ornstein--Uhlenbeck process.

For any $P,Q \in \cP_2(\R^d)$, recall that the $2$-Wasserstein distance is
\begin{align*}
W_2(P,Q) \defeq \Bigl(\inf_{\pi \in \Pi(P,Q)} \iint \|x-y\|^2\, \pi(\!\dd x,\!\dd y)\Bigr)^{1/2}\,,
\end{align*} 
where $\Pi(P,Q)$ is the space of joint probability measures with marginals $P$ and $Q$. For any $P \in \cP_2(\R^d)$, we write $\mom(P) \defeq (\int \|x\|^2\dd P(x))^{1/2} < \infty$.  

\subsection{Construction of the map}\label{sec:construction}

We first provide some background on the construction of the flow map due to \cite{kim2012generalization}, which is based on time reversal of the Langevin dynamics. We differ from the usual construction in the literature based on inverting the forward map, and instead first invert the time before taking the limiting flow map.

Let $\gamma = \cN(0,I)$ and let $\mu \in \cP_2(\R^d)$ be arbitrary. Our forward dynamics are governed by the Ornstein--Uhlenbeck (OU) process: for $X_0^{\rightarrow} \sim \mu$, 
\begin{align}\label{eq:ou}
    \dd X_t^\rightarrow = - X_t^{\rightarrow}{\dd t} + \sqrt{2}\dd B_t\,,
\end{align}
where $(B_t)_{t \geq 0}$ is a standard Brownian motion. Writing $q_t^\mu \defeq {\rm Law}(X_t^{\rightarrow})$, $(q_t^\mu)_{t\ge 0}$ satisfies the following Fokker--Planck equation\footnote{Here we  abuse notation and identify measures with their Lebesgue densities.}
\begin{align*}
    \partial_t q_t^\mu - \nabla \cdot (q_t^\mu\, ({\rm id} + \nabla \log q_t^\mu)) = 0\,, \quad q_0^\mu = \mu\,.
\end{align*}
Observe that if $\mu \in \cP_2(\mathbb{R}^d)$, a simple coupling argument yields that $q_t^\mu \to \gamma$ (for instance, in $W_2$) as $t \to \infty$. 

Performing a time reversal on the interval $[0,T]$ of this equation yields
\begin{align*}
    \partial_t q^\mu_{T-t} + \nabla \cdot (q^\mu_{T-t}\,({\rm id} +  \nabla \log q^\mu_{T-t})) = 0\,. 
\end{align*}
Recognizing this as a continuity equation, it describes the evolution of the marginal law for the following ODE, i.e., ${\rm Law}(X_t^{\leftarrow}) = q^\mu_{T-t}$ with
\begin{align}\label{eq:rev_ou_ode}
    \dot X_t^{\leftarrow} = X_t^{\leftarrow} + \nabla \log q^\mu_{T-t}(X_t^\leftarrow) = \nabla \log\Bigl( \frac{\!\dd q^\mu_{T-t}}{\!\dd \gamma}\Bigr)(X_t^\leftarrow) \,, \quad X_0^\leftarrow \sim q^\mu_T\,.
\end{align}
We can also write \eqref{eq:rev_ou_ode} using semigroup notation. Letting $(Q_s)_{s \geq 0}$ denote the OU semigroup, using $\frac{\dd q^\mu_{T-t}}{\dd \gamma} = Q_{T-t}\frac{\dd \mu}{\dd \gamma}$ we have that
\begin{align}\label{eq:ode_semigroup}
    \dot X_t^{\leftarrow} 
    = \nabla \log Q_{T-t}\Bigl[\frac{\!\dd\mu}{\!\dd\gamma}\Bigr](X_t^{\leftarrow})\,, \quad X_0^\leftarrow \sim q^\mu_T\,,
\end{align}
whenever $\mu \ll \gamma$.
Here, we made use of the fact that $\gamma$ is the stationary distribution of the OU process.

Now, let $\mathsf T^{\mu,T} \equiv (\mathsf T^{\mu,T}_t)_{t \in [0,T]}$ denote the flow map solution to \eqref{eq:rev_ou_ode}. So, $\mathsf T^{\mu,T}_{t}(X^{\leftarrow}_0) = X^{\leftarrow}_t$ and $(\mathsf T^{\mu,T}_{T-t})_\sharp q^\mu_T = q^\mu_t$. We now take the limit $T \to\infty$, and the transport map from $\gamma$ to $\mu$ is thus given by $\mathsf T^{\mu,\infty} \defeq \lim_{T\to\infty} \mathsf T^{\mu,T}$.
{We also abbreviate $\mathsf T^\mu \deq \mathsf T^{\mu,\infty}$.}

We will also make use of the flow map from $\gamma$ to $q^\mu_t$ given by
\begin{equation}\label{eq:flow-map-q_t}
    \mathsf T^{\mu}_{t \gets} \deq \lim_{T \rightarrow \infty} \mathsf T^{\mu,T}_{T-t}
\end{equation}
which satisfies $(\KM^{\mu}_{t\gets})_{\sharp}\gamma = q^\mu_t$.
The notation indicates that this is the KM map, stopped at $q_t^\mu$.

\subsection{Assumption and existing properties}\label{sec:existing}

\subsubsection{Tweedie identities}

For later reference, we collect together the basic Tweedie identities here.

\begin{lemma}
    Let $X_0 \sim \mu$ and $Z \sim \gamma$ be independent, and write $X_t = e^{-t}\,X_0 + \sqrt{1-e^{-2t}}\,Z$.
    Then, for all $t > 0$,
    \begin{align}
        \nabla \log \frac{q_t^\mu}{\gamma}(x_t)
        &= -\frac{e^{-t}}{1-e^{-2t}}\, \E[e^{-t} X_t - X_0 \mid X_t = x_t]\,,\label{eq:tweedie_one} \\[0.25em]
        \nabla^2 \log \frac{q_t^{\mu}}{\gamma}(x_t)
        &= -\frac{e^{-2t}}{1-e^{-2t}}\,I + \frac{e^{-2t}}{(1-e^{-2t})^2} \Cov(X_0 \mid X_t=x_t)\,.\label{eq:tweedie_two}
    \end{align}
\end{lemma}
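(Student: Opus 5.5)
The plan is to write $q_t^\mu$ explicitly as a Gaussian convolution and differentiate under the integral sign. Set $\sigma_t^2 \deq 1-e^{-2t}$. Since $X_t \mid X_0 = x_0 \sim \cN(e^{-t}x_0, \sigma_t^2 I)$, the Lebesgue density of $q_t^\mu$ is
\begin{align*}
q_t^\mu(x) = \int (2\pi\sigma_t^2)^{-d/2} \exp\Bigl(-\frac{\|x-e^{-t}x_0\|^2}{2\sigma_t^2}\Bigr)\,\mu(\rd x_0)\,,
\end{align*}
and this is smooth and positive for $t>0$. Differentiating under the integral is justified because the Gaussian kernel and its derivatives in $x$ are bounded by $C(1+\|x_0\|^2)$ times a bounded function, locally uniformly in $x$, and $\mu\in\cP_2(\R^d)$. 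Applying Bayes' rule, we obtain
\begin{align*}
\nabla\log q_t^\mu(x) = -\frac{1}{\sigma_t^2}\,\E[X_t - e^{-t}X_0 \mid X_t = x]\,.
\end{align*}
The posterior of $X_0$ given $X_t=x$ is proportional to $\exp(-\|x-e^{-t}x_0\|^2/(2\sigma_t^2))\,\mu(\rd x_0)$.

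For \eqref{eq:tweedie_one}, we subtract $\nabla\log\gamma(x) = -x$. This gives
\begin{align*}
\nabla\log\frac{q_t^\mu}{\gamma}(x) = x - \frac{x - e^{-t}\,\E[X_0\mid X_t=x]}{\sigma_t^2} = \frac{-e^{-2t}x + e^{-t}\,\E[X_0\mid X_t=x]}{\sigma_t^2}\,,
\end{align*}
using $\sigma_t^2 - 1 = -e^{-2t}$. The right-hand side equals $-\frac{e^{-t}}{\sigma_t^2}\,\E[e^{-t}X_t - X_0\mid X_t=x]$, which is \eqref{eq:tweedie_one}.

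For \eqref{eq:tweedie_two}, we differentiate once more. The cleanest route is to write $\nabla\log q_t^\mu(x) = -x/\sigma_t^2 + (e^{-t}/\sigma_t^2)\,m(x)$, where $m(x) \deq \E[X_0\mid X_t=x]$. The posterior density of $X_0$ has the exponential-family form $\propto \exp\bigl(\langle x, e^{-t}x_0\rangle/\sigma_t^2\bigr)\,\tilde\mu(\rd x_0)$, with the $x$-independent factor $\exp(-e^{-2t}\|x_0\|^2/(2\sigma_t^2))$ absorbed into $\tilde\mu$. Differentiating the mean of an exponential family with respect to its natural parameter gives the covariance, so
\begin{align*}
\nabla m(x) = \frac{e^{-t}}{\sigma_t^2}\,\Cov(X_0\mid X_t=x)\,.
\end{align*}
Hence $\nabla^2\log q_t^\mu = -I/\sigma_t^2 + \frac{e^{-2t}}{\sigma_t^4}\Cov(X_0\mid X_t=x)$. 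Adding $\nabla^2(-\log\gamma) = I$ and using $1 - 1/\sigma_t^2 = -e^{-2t}/\sigma_t^2$ yields \eqref{eq:tweedie_two}.

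The only genuine obstacle is justifying the differentiation under the integral, i.e.\ the finiteness of the posterior moments. For $t>0$ the posterior has Gaussian tails relative to $\mu$, so its second moments are finite. Dominated convergence with the bound $C(1+\|x_0\|^2)$, valid for $x$ in a compact set, then handles both derivatives.
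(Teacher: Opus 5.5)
Your proposal is correct. The Bayes-rule formula $\nabla \log q_t^\mu(x) = -\sigma_t^{-2}\,\E[X_t - e^{-t}X_0 \mid X_t = x]$ is right. Adding $x$ (since $\nabla\log\gamma(x) = -x$) and using $\sigma_t^2 - 1 = -e^{-2t}$ gives the first identity. The exponential-family step, with natural parameter $e^{-t}x/\sigma_t^2$, gives $\nabla m(x) = \frac{e^{-t}}{\sigma_t^2}\Cov(X_0 \mid X_t = x)$. Together with $1 - \sigma_t^{-2} = -e^{-2t}/\sigma_t^2$, this yields the second identity exactly.

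The paper does not prove this lemma; it only collects these as the standard Tweedie identities. So there is no competing argument to compare, and yours is the standard derivation.

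One minor refinement: you do not need $\mu \in \cP_2(\R^d)$ to justify differentiating under the integral. For $t>0$, the functions $\|x - e^{-t}x_0\|^j \exp\bigl(-\|x - e^{-t}x_0\|^2/(2\sigma_t^2)\bigr)$ are bounded uniformly in $(x,x_0)$. Also, for fixed $x$ the posterior weight decays like a Gaussian in $x_0$, so all posterior moments are finite. Hence both identities hold for every probability measure $\mu$.
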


\subsubsection{Existence and Lipschitz continuity}

The (reverse) heat flow map was developed by \citet{kim2012generalization} as a means of providing a generalization of Caffarelli's contraction theorem.
Since then, there has been much progress towards proving Lipschitz continuity of the heat flow map; for recent developments see \cite{MikShe23HeatFlow}, \cite{fathi2024transportation, conforti2025coupling} for general Langevin dynamics, and \cite{BriPed25HeatFlow, chaintron_propagation_2025} for the most general treatment in the case of the OU process.

All of the above works have in common finding sufficient conditions for the following  pointwise log-semi-concavity bound and accompanying integrability condition:
\begin{align}\label{theta}\tag{$\Theta$}
    \nabla^2 \log Q_s\Bigl[\frac{\!\dd\mu}{\!\dd\gamma}\Bigr](\cdot) = I + \nabla^2 \log q^\mu_s(\cdot) \preceq \theta_s I \quad \text{and}\quad \sup_{t\ge 0}\int_t^\infty \theta_s \dd s < +\infty\,.
\end{align}
This implies Lipschitz continuity of the heat flow map, as stated in the proposition below, and corresponds to our main assumption.
Note also that by~\eqref{eq:tweedie_two}, we always have the lower bound
\begin{align*}
    \nabla^2 \log Q_s\Bigl[\frac{\!\dd\mu}{\!\dd\gamma}\Bigr](\cdot) \succeq -\frac{e^{-2s}}{1-e^{-2s}}\,I\,.
\end{align*}

Lipschitz continuity of the flow map follows from Proposition~\ref{prop:lip_km} below, first given by \cite{kim2012generalization} and later formalized in this form by \citet[Lemma 11]{MikShe23HeatFlow}. We write the statement for the general flow map from \eqref{eq:flow-map-q_t}.

\begin{proposition}\label{prop:lip_km}
Assume that $\mu$ satisfies \eqref{theta}. Then, for all $t \ge 0$, $\mathsf T^{\mu}_{t\gets}$ exists and is Lipschitz continuous with constant at most 
\begin{align}\label{eq:def-L_t}
    L_t(\mu) \deq \exp(\textstyle \int_t^\infty \theta_s \dd s)\,.
\end{align}
\end{proposition}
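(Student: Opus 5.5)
The plan is to work first with the finite-horizon flow maps $\KM^{\mu,T}_{T-t}$, which transport $q^\mu_T$ to $q^\mu_t$, and then pass to the limit $T\to\infty$. For fixed $T$ and $0\le t\le T$, the ODE \eqref{eq:rev_ou_ode} has a velocity field $v_r(x) = \nabla \log Q_{T-r}[\frac{\dd\mu}{\dd\gamma}](x)$ that is $C^\infty$ for $T-r>0$. By \eqref{theta} its Jacobian satisfies $\nabla v_r \preceq \theta_{T-r} I$, and by \eqref{eq:tweedie_two} it is bounded below. So on $[0,T-t]$ with $t>0$ the field is smooth and globally Lipschitz in $x$, because both Hessian bounds are finite once $T-r\ge t>0$. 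The flow therefore exists and is unique. For $t=0$ I will handle the endpoint afterwards.

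The first key step is the one-sided Lipschitz estimate. For two solutions $x_r,y_r$,
\begin{align*}
\frac{\dd}{\dd r}\|x_r-y_r\|^2 = 2\langle v_r(x_r)-v_r(y_r), x_r-y_r\rangle \le 2\theta_{T-r}\|x_r-y_r\|^2 .
\end{align*}
The inequality comes from writing the difference of $v_r$ as the integral of $\nabla v_r$ along the segment and using $\nabla v_r\preceq \theta_{T-r}I$. Grönwall then gives
\begin{align*}
\|\KM^{\mu,T}_{T-t}(x)-\KM^{\mu,T}_{T-t}(y)\| \le \exp\Bigl(\int_t^T \theta_s\dd s\Bigr)\|x-y\| \le L_t(\mu)\,\|x-y\| .
\end{align*}
Only the upper bound on the Hessian enters here. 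If $\theta_s$ can be negative, one uses $\theta_s^+$, or notes that the integral is bounded above by the supremum in \eqref{theta}. This gives a Lipschitz bound uniform in $T$.

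The second step, which I expect to be the main obstacle, is existence of the limit $T\to\infty$. The maps $\KM^{\mu,T}_{T-t}$ start from $q^\mu_T$ rather than from $\gamma$. I will compare $T<T'$: the map $\KM^{\mu,T'}_{T'-t}$ equals $\KM^{\mu,T}_{T-t}\circ \KM^{\mu,T'}_{T'-T}$, by the flow property, since both solve the same ODE with time-shifted but identical vector fields. Hence
\begin{align*}
\|\KM^{\mu,T'}_{T'-t}(x)-\KM^{\mu,T}_{T-t}(x)\| \le L_t(\mu)\,\|\KM^{\mu,T'}_{T'-T}(x)-x\| \le L_t(\mu)\int_T^{T'} \|v\|\dd r .
\end{align*}
The velocity along the trajectory is $\nabla\log(q^\mu_s/\gamma)$ with $s\in[T,T']$. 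By \eqref{eq:tweedie_one} this is $O(e^{-s})(\|x_s\|+\mathbb E[\|X_0\|\mid X_s=x_s])$. Bounding it pointwise requires some control of trajectory growth, which is again available from the Lipschitz bound and the origin's trajectory. Alternatively, I will work in $L^2(\gamma)$: the integral of $\|v\|_{L^2}$ is at most $\int_T^\infty \FI(q^\mu_s\mmid\gamma)^{1/2}\dd s$. Here $\FI(q^\mu_s\mmid\gamma)\lesssim e^{-2s}\mom(\mu)^2/(1-e^{-2s})^2$ by Tweedie and Jensen, but this holds under $q^\mu_s$ rather than $\gamma$. The two laws are close for large $s$, and I will handle that discrepancy with the uniform Lipschitz bound. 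Either way the family is Cauchy locally uniformly, so $\KM^\mu_{t\gets}$ exists as a locally uniform limit and inherits the Lipschitz constant $L_t(\mu)$.

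Finally, pushforwards pass to the limit. $(\KM^{\mu,T}_{T-t})_\sharp q^\mu_T=q^\mu_t$, and $q^\mu_T\to\gamma$ in $W_2$. Combined with equi-Lipschitzness, this gives $(\KM^\mu_{t\gets})_\sharp\gamma = q^\mu_t$. For $t=0$, the maps $\KM^\mu_{t\gets}$ are $L_0(\mu)$-Lipschitz for all $t>0$, since $L_t\le \exp(\sup_t\int_t^\infty\theta_s\dd s)$. As $t\downarrow0$ they converge by the same Cauchy argument near $s=0$, where the velocity is $L^2$-integrable in time by the same Tweedie bound. Their limit is $\KM^\mu$, it pushes $\gamma$ to $\mu$, and it is Lipschitz with constant $L_0(\mu)$.
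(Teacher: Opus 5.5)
Your architecture is the one the paper uses, and it is sound up to the $t\searrow0$ step, which fails as written.

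\textbf{Where you match the paper.} The one-sided Gr\"onwall estimate on $\|x_r-y_r\|^2$ is the Kim--Milman/Mikulincer--Shenfeld argument the paper cites for the constant. The $T\to\infty$ step is Lemma~\ref{lem:km_regular_case}. Your factorization $\KM^{\mu,T'}_{T'-t}=\KM^{\mu,T}_{T-t}\circ\KM^{\mu,T'}_{T'-T}$, combined with the uniform Lipschitz bound, is a tidy replacement for the paper's Gr\"onwall on the integral equation. You still need a local velocity bound such as $\|\nabla\log(q^\mu_s/\gamma)(y)\|\le C(\delta,R,M)\,e^{-s}$ on $\{\|y\|\le R\}$, as in Lemma~\ref{lem:technical_estimates}. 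The pointwise route suffices there, so your $L^2(\gamma)$ alternative, with its $q^\mu_s$-versus-$\gamma$ mismatch, is unnecessary. The $t\searrow0$ step corresponds to the second half of Lemma~\ref{lem:existence_km}.

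\textbf{The gap at $t\searrow0$.} Your bound $\FI(q^\mu_s\mmid\gamma)\lesssim e^{-2s}\mom^2(\mu)/(1-e^{-2s})^2$ omits the dimensional term. For $\mu=\delta_0$ the left side equals $d\,e^{-4s}/(1-e^{-2s})$, while your bound vanishes. Restoring that term over the same denominator gives a bound whose square root is of order $1/s$ near $0$. Then $\int_0^t\FI(q^\mu_u\mmid\gamma)^{1/2}\dd u=\infty$, so ``the same Tweedie bound'' does not make $\{\KM^\mu_{t\gets}\}$ Cauchy as $t\searrow0$.

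\textbf{The fix.} Expand $e^{-s}X_s-X_0=-(1-e^{-2s})\,X_0+e^{-s}\sqrt{1-e^{-2s}}\,Z$ before applying Jensen to \eqref{eq:tweedie_one}. This gives Lemma~\ref{lem:FI_abs_bound}:
\begin{align*}
\FI(q^\mu_s\mmid\gamma)\le e^{-2s}\mom^2(\mu)+\frac{e^{-4s}\,d}{1-e^{-2s}}\,,
\end{align*}
whose square root is $O(\mom(\mu)+\sqrt{d/s})$ and hence integrable at $0$. Now use the limiting flow identity $\KM^\mu_{s\gets}=\KM^{\mu,t}_{t-s}\circ\KM^\mu_{t\gets}$ for $0<s<t$. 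Together with $(\KM^\mu_{u\gets})_\sharp\gamma=q^\mu_u$, the $L^2(\gamma)$ norm of the velocity along the limit flow is exactly $\FI(q^\mu_u\mmid\gamma)^{1/2}$. Minkowski's inequality then yields $\|\KM^\mu_{s\gets}-\KM^\mu_{t\gets}\|_{L^2(\gamma)}\le\mom(\mu)\,t+\sqrt{d\,(1-e^{-2t})}$, which is the Cauchy estimate you need.

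\textbf{Two slips in the constant.}
\begin{enumerate}[label=(\roman*)]
\item The inequality $\exp(\int_t^T\theta_s\dd s)\le L_t(\mu)$ requires $\int_T^\infty\theta_s\dd s\ge0$. This fails for $\alpha$-strongly log-concave $\mu$ with $\alpha>1$, where $\theta_s<0$ by \eqref{eq:theta_slc}. Instead, let $T\to\infty$ in the constant itself. This works because $\theta_s\ge-e^{-2s}/(1-e^{-2s})$ holds automatically by \eqref{eq:tweedie_two}, so $\int_T^\infty\theta_s\dd s\to0$.
\item Your claim that $\KM^\mu_{t\gets}$ is $L_0(\mu)$-Lipschitz for every $t>0$ is false in general. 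For the same $\mu$, $L_t$ increases to $1$ while $L_0=\alpha^{-1/2}$. The uniform bound $\exp(\sup_t\int_t^\infty\theta_s\dd s)$ equals $1$ there, not $L_0$. Instead, use $L_t(\mu)\to L_0(\mu)$ as $t\searrow0$ and pass to an a.e.\ convergent subsequence.
\end{enumerate}
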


We note that~\citet{MikShe23HeatFlow} proved the existence of the map {under the condition \eqref{theta}, but not that it admits an ODE representation. Since we need this fact for our results, we prove this as Lemma~\ref{lem:km_regular_case} in Appendix~\ref{app:existence} (see also Section~\ref{sec:ode_rep}).}

Note that the Lipschitz continuity with constant $L_t(\mu)$ implies in particular that $q^\mu_t$ satisfies a \emph{log-Sobolev inequality}: for any $\rho \in \cP(\R^d)$ such that $\rho \ll q_t^\mu$,
\begin{align}\label{eq:lsi}
    \mathsf H(\rho\mmid q_t^\mu) \leq \frac{L_t^2(\mu)}{2} \FI(\rho\mmid q_t^\mu)\,.
\end{align}
As an example, suppose that $\mu$ is $\alpha$-strongly log-concave. Then, it is known that \eqref{theta} is satisfied with~\citep[see, e.g.,][]{MikShe23HeatFlow,BriPed25HeatFlow}
\begin{align}\label{eq:theta_slc}
    I + \nabla^2 \log q_s^\mu(\cdot) \preceq \frac{1-\alpha}{\alpha(e^{2s}-1) + 1}\,I\,.
\end{align}
One can verify that for all $\alpha > 0$, $L_0(\mu) = \alpha^{-1/2}$.

In our main stability result, we will assume that $\mu$ satisfies~\eqref{theta}. However, we do not wish to impose the assumption~\eqref{theta} on $\nu$ as well, but then the existence of the KM map for $\nu$ is not guaranteed by Proposition~\ref{prop:lip_km}. To address this gap, we state a general existence result for any measure $\nu \in \cP_2(\R^d)$ below. We were unable to find this result in the literature, so it could be of independent interest. We defer the proof to Appendix~\ref{app:existence}.

\begin{lemma}[Existence]\label{lem:existence_km}
    Let $\nu \in \cP_2(\R^d)$, and let $\{\nu_n\}_{n\in\N}$ be a regular approximating sequence for $\nu$ (see Definition~\ref{def:reg_approx}).
    Then, for every $t > 0$, the $L^2(\gamma)$ limit $\KM^\nu_{t\gets} \deq \lim_{n\to\infty} \KM^{\nu_n}_{t\gets}$ exists, is independent of the approximating sequence $\{\nu_n\}_{n\in\N}$, and $(\KM^\nu_{t\gets})_\sharp \gamma = q_t^\nu$.

    Moreover, $\KM^\nu \deq \lim_{t\searrow 0} \KM^\nu_{t\gets}$ exists in $L^2(\gamma)$, and $(\KM^\nu)_\sharp \gamma = \nu$.
\end{lemma}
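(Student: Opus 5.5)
The plan is to obtain $\KM^\nu_{t\gets}$ as a Cauchy limit in $L^2(\gamma)$, using a stability estimate between flow maps of two regular measures that depends on the measures only through quantities controlled by $W_2$. A regular approximating sequence (Definition~\ref{def:reg_approx}) presumably consists of measures $\nu_n$ satisfying \eqref{theta}, for instance Gaussian convolutions of compactly supported truncations, with $W_2(\nu_n,\nu)\to 0$. For such measures, Proposition~\ref{prop:lip_km} and Lemma~\ref{lem:km_regular_case} give the ODE representation, so each $\KM^{\nu_n}_{t\gets}$ is the limit as $T\to\infty$ of the flow of $\dot X = \nabla\log(q^{\nu_n}_{T-s}/\gamma)(X)$.

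The key step is a stability estimate for fixed $t>0$. Fix $t>0$ and two regular measures $\mu,\nu'$. Couple the two ODEs driven by $\nabla\log(q^{\mu}_{s}/\gamma)$ and $\nabla\log(q^{\nu'}_{s}/\gamma)$, for $s$ running from $T$ down to $t$, from the same starting point $X\sim\gamma$. Differentiate $\E\|X_s-Y_s\|^2$ and split the velocity difference into two pieces.
\begin{itemize}
\item The first piece is a Lipschitz term. By \eqref{eq:tweedie_two}, for $s\ge t$ the Hessian of $\log(q^{\nu'}_s/\gamma)$ lies between $-\frac{e^{-2s}}{1-e^{-2s}}I$ and $\frac{e^{-2s}}{(1-e^{-2s})^2}\,\mathrm{diam}$-type bounds. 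For general $\nu'$ with only a second moment the covariance is not bounded, so I would use one-sided control only. Only the upper bound matters for growth of the distance, and I would apply it on the $\mu$ side, where \eqref{theta} holds.
\item The second piece is a score discrepancy $\E_{q_s}\|\nabla\log q^\mu_s-\nabla\log q^{\nu'}_s\|^2$. This is essentially a relative Fisher information, evaluated on the pushforward law. Because $q_s^\mu$ is exactly the law of the $\mu$-trajectory, this equals $\FI(q_s^\mu\mmid q_s^{\nu'})$.
\end{itemize}
Grönwall then gives
\[
\|\KM^{\mu}_{t\gets}-\KM^{\nu'}_{t\gets}\|_{L^2(\gamma)}\le L_t(\mu)\int_t^\infty \FI(q^\mu_s\mmid q^{\nu'}_s)^{1/2}\dd s .
\]
The Fisher information is controlled via the regularization of the OU semigroup. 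A reverse transport/log-Harnack-type bound gives $\FI(q_s^\mu\mmid q_s^{\nu'})\lesssim \frac{e^{-2s}}{(1-e^{-2s})^2}W_2^2(\mu,\nu')$-type control. Alternatively, de Bruijn gives $\int_t^\infty \FI\,\dd s\le \mathsf H(q_t^\mu\mmid q_t^{\nu'})\lesssim W_2^2/(e^{2t}-1)$, and combined with monotonicity this gives integrability of the square root with exponential decay.

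The main obstacle is that $\mu$ in the Cauchy argument is itself $\nu_n$, so $L_t(\nu_n)$ must be bounded uniformly in $n$. I would first try to avoid \eqref{theta} entirely for fixed $t>0$. At times $s\ge t$, \eqref{eq:tweedie_two} gives $\nabla^2\log(q^{\nu_n}_s/\gamma)\preceq \frac{e^{-2s}}{(1-e^{-2s})^2}\Cov(X_0\mid X_s)$. This is not uniformly bounded, so instead I would work in $L^2$ via a synchronous-coupling estimate. The term $\E\langle X-Y,\nabla\phi(X)-\nabla\phi(Y)\rangle$ can be bounded using an averaged Hessian involving $\E\Cov(X_0\mid X_s)\le \mom(\nu_n)^2$, which is uniform in $n$. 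Failing that, I would choose the approximation of Definition~\ref{def:reg_approx} so that compact truncation gives a uniform \eqref{theta} for $s\ge t$. On compact support, $\Cov\preceq R^2 I$, so $\theta_s\lesssim R^2 e^{-2s}/(1-e^{-2s})^2$ is integrable on $[t,\infty)$. I would then handle the truncation radius by a separate moment-tail argument.

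Once the Cauchy property holds, the limit $\KM^\nu_{t\gets}$ exists. Independence of the approximating sequence follows by interlacing two sequences. The pushforward identity $(\KM^\nu_{t\gets})_\sharp\gamma=q_t^\nu$ follows because $L^2$ convergence implies convergence of pushforwards in $W_2$, and $q_t^{\nu_n}\to q_t^\nu$.

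For $t\searrow 0$, I would compare $\KM^\nu_{t\gets}$ and $\KM^\nu_{t'\gets}$ for $t'<t$, again through approximants. The difference is the flow on $[t',t]$ applied to the same point. Its $L^2$ size is at most $\int_{t'}^t \|\nabla\log(q_s^{\nu_n}/\gamma)\|_{L^2(q_s^{\nu_n})}\dd s$, and by \eqref{eq:tweedie_one} together with Jensen, $\|\nabla\log(q_s/\gamma)\|_{L^2(q_s)}\lesssim \frac{e^{-s}}{\sqrt{1-e^{-2s}}}(1+\mom)$. This behaves like $s^{-1/2}$, which is integrable at $0$, uniformly in $n$. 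Hence $t\mapsto\KM^\nu_{t\gets}$ is Cauchy in $L^2(\gamma)$ as $t\to 0$, and the limit pushes $\gamma$ to $\lim q_t^\nu=\nu$.
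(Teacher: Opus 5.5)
\textbf{There is a genuine gap in the Cauchy step at fixed $t>0$.} Your Gr\"onwall estimate $\|\KM^{\nu_n}_{t\gets}-\KM^{\nu_m}_{t\gets}\|_{L^2(\gamma)}\le L_t(\nu_n)\int_t^\infty \FI(\cdot\mmid\cdot)^{1/2}\dd s$ gives a Cauchy sequence only under two conditions. First, $L_t(\nu_n)$ must be bounded uniformly in $n$. Second, the relative Fisher information between $q_s^{\nu_m}$ and $q_s^{\nu_n}$ must be controlled by $W_2(\nu_m,\nu_n)$ with $n$-independent constants. You have neither:
\begin{itemize}
\item Definition~\ref{def:reg_approx} lets each $\nu_n$ satisfy \eqref{theta} with its own constants, and $\nu$ itself need not satisfy any regularity, so $L_t(\nu_n)$ has no uniform bound.
\item Log-Harnack controls only the entropy. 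The Fisher-information bound \eqref{eq:FI<W2} goes through Wibisono's differential inequality and needs \eqref{theta} for the reference measure, with constants entering through $c(t)$ in \eqref{eq:ct}.
\item Your ``de Bruijn plus monotonicity'' route has the same problem, because relative Fisher information between two OU marginals is not monotone in general.
\end{itemize}
There is also a mismatch of direction: if you apply the one-sided Hessian bound on the $\mu$ side, the score discrepancy is evaluated along the $\nu'$-trajectory. That gives $\FI(q_s^{\nu'}\mmid q_s^\mu)$, as in \eqref{eq:stab_start}, not $\FI(q_s^\mu\mmid q_s^{\nu'})$.

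Neither proposed fix closes the gap.
\begin{itemize}
\item An averaged Hessian cannot drive Gr\"onwall. You need $\E\int_0^1\langle X-Y,\nabla^2\phi((1-\tau)X+\tau Y)\,(X-Y)\rangle\dd\tau\le C\,\E\|X-Y\|^2$. That requires a pointwise Hessian bound at points whose law is not $q_s$, and the Hessian is multiplied by $\|X-Y\|^2$, so the expectation does not factor. Hence $\|\E\Cov(X_0\mid X_s)\|_{\mathrm{op}}\le\mom^2(\nu_n)$ does not help.
\item Compact truncation with radius $R_n\to\infty$ gives $\int_t^\infty\theta_s\dd s\lesssim R_n^2e^{-2t}/(1-e^{-2t})$. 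So $L_t(\nu_n)$ blows up like $\exp(c_tR_n^2)$, and the ``separate moment-tail argument'' is exactly the missing step. Moreover, committing to one special sequence would not prove the claim for an arbitrary regular approximating sequence, and your interlacing argument for independence needs exactly that.
\end{itemize}

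\textbf{The missing idea is to localize in space rather than in the support of the measure.} Then only $\sup_n\mom(\nu_n)\le M$ enters quantitatively, and \eqref{theta} is used only qualitatively, to get the ODE representation of Lemma~\ref{lem:km_regular_case}.
\begin{itemize}
\item From \eqref{eq:tweedie_one}--\eqref{eq:tweedie_two}, lower-bounding the normalizing integral by its mass on $\{\|x\|\le 2M\}$, one gets for $\|y\|\le R$ and $s\ge\delta$ the bounds $\|\nabla\log(q_s^{\nu}/\gamma)(y)\|\le C(\delta,R,M)\,e^{-s}$ and $\|\nabla^2\log(q_s^{\nu}/\gamma)(y)\|_{\mathrm{op}}\le C'(\delta,R,M)\,e^{-2s}$. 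These hold uniformly over all $\nu$ with $\mom(\nu)\le M$ (Lemma~\ref{lem:technical_estimates}).
\item Let $E_R$ be the event that both trajectories $(\KM^{\nu_m}_{s\gets})_{s\ge\delta}$ and $(\KM^{\nu_n}_{s\gets})_{s\ge\delta}$ stay in $B(0,R)$. On $E_R$, a pathwise Gr\"onwall argument bounds their distance at time $\delta$ by a constant times $\int_\delta^\infty\sup_{B(0,R)}\|\nabla\log(q_s^{\nu_m}/\gamma)-\nabla\log(q_s^{\nu_n}/\gamma)\|\dd s$. This integral vanishes as $m,n\to\infty$ by Tweedie's formula, weak convergence and dominated convergence.
\item $\gamma(E_R^\comp)=O(R^{-2})$ uniformly in $n$, by the ODE and Lemma~\ref{lem:FI_abs_bound}.
\item Convergence in probability is upgraded to $L^2(\gamma)$ by uniform integrability, since $q_\delta^{\nu_n}\to q_\delta^\nu$ in $W_2$.
\end{itemize}

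With this in place, the rest of your plan is sound and agrees with the paper. The pushforward identity follows from $W_2((\KM^{\nu_n}_{t\gets})_\sharp\gamma,(\KM^{\nu}_{t\gets})_\sharp\gamma)\le\|\KM^{\nu_n}_{t\gets}-\KM^{\nu}_{t\gets}\|_{L^2(\gamma)}$. Independence follows by interlacing, which is cleaner than the paper's ``inspect the proof''. The $t\searrow0$ step follows from $\int_s^t\sqrt{\FI(q_u^{\nu_n}\mmid\gamma)}\dd u=O(\sqrt t)$ uniformly in $n$.
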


\subsubsection{Stability in relative Fisher information}

Recently, \cite{chewi2025stability} proved the first stability bound for the Kim--Milman flow map. If $\mu$ is such that \eqref{theta} holds, they proved that 
\begin{align}\label{eq:stab_fisher_old}
    \|\mathsf T^{\mu} - \mathsf T^{\nu}\|_{L^2(\gamma)}\lesssim \sqrt{\FI(\nu\mmid\mu)}\,,
\end{align}
where the underlying constant depends on \eqref{theta}. Through similar arguments, they also establish stability of the flow map under stronger metrics:
\begin{align*}
    \|\mathsf T^{\mu} - \mathsf T^{\nu}\|_{L^\infty(\gamma)}\lesssim \sqrt{{\mathsf{FI}}_\infty(\nu\mmid\mu)}\,,
\end{align*}
where $\FI_\infty(\nu\mmid\mu) = {\rm ess} \sup_\nu\|\nabla \log (\!\dd\nu/\!\dd\mu)\|^2$.

The proof of our stability result builds upon the proof therein, supplementing it with new ingredients in order to obtain bounds involving only $W_2(\mu,\nu)$ on the right-hand side. Moreover, in Theorem~\ref{thm:stab_ent}, we prove a stability result in which the right-hand side of~\eqref{eq:stab_fisher_old} is replaced by $\sqrt{\KL(\nu\mmid \mu)}$, which strictly improves upon~\eqref{eq:stab_fisher_old}.

We mention that a similar result also appears in \cite{khudiakova2025optimal}, though with a different motivation, proof, and result.

\subsection{Related work}\label{sec:related}
Inspired by the work of \cite{caffarelli2000monotonicity}, \cite{kim2012generalization} showed that if $\mu$ is $\alpha$-strongly log-concave, then it holds that both the optimal transport map and the KM map are $\alpha^{-1/2}$-Lipschitz \citep{caffarelli2000monotonicity,kim2012generalization}. Recent techniques have extended this result to new scenarios. For example, new results based on entropic optimal transport have extended Caffarelli's original result for optimal transport maps \citep{FatGozPro20Caff, chewi2023entropic,gozlan2025global}. Similarly, the regularity estimates for $\mathsf T^{\mu,\infty}$ have recently been generalized by \cite{MikShe23HeatFlow,BriPed25HeatFlow,chaintron_propagation_2025}, going well beyond the strongly log-concave setting originally considered by \cite{kim2012generalization}.

\paragraph{Stability of optimal transport maps.}
Letting $\cT(\rho,\mu)$ be the set of transport maps from $\rho$ to $\mu$, optimal transport maps are those which minimize the average cost of displacement:
\begin{align}
    \mathsf  T_{0}^{\rho\to\mu} \defeq \underset{\mathsf  T \in \cT(\rho,\mu)}{\rm argmin}\int \|x- \mathsf T(x)\|^2 \, \rho(\!\dd x)\,.
\end{align}
Remarkably, these maps are gradients of convex functions \citep{brenier1991polar}. The stability of these transport maps has been widely studied across various communities \citep{gigli2011holder,delalande2023quantitative, balakrishnan2025stability, letrouit2025quantitative,cazelles2026statistical, merigot2026sharp}.

This line of work was inaugurated by \citet{gigli2011holder}, who proved that if $D\mathsf  T_0^{\rho\to\mu} \preceq L I$ for some constant $L > 0$, then for $\rho,\mu,\nu \in \cP(B(0,R))$ the following stability bound holds:
\begin{align*}
    \|\mathsf  T_0^{\rho\to\mu} - \mathsf  T_0^{\rho\to\nu}\|_{L^2(\rho)}\leq (4RL)^{1/2}\, W_2^{1/2}(\mu,\nu)\,. 
\end{align*}
\cite{manole2024plugin} proved that for any triple of probability measures $\rho,\mu,\nu$, so long as  $0 \prec \ell I \preceq D \mathsf T_0^{\rho\to\mu} \preceq L I$ for $\ell,L> 0$, the stability bound for optimal transport maps can be augmented to
\begin{align}\label{eq:ot_lip_stab}
    \|\mathsf  T_0^{\rho\to\mu} - \mathsf  T_0^{\rho\to\nu}\|_{L^2(\rho)}\leq \Bigl(\frac{L}{\ell}\Bigr)^{1/2}\, W_2(\mu,\nu)\,.
\end{align}
Recent work has sought to characterize stability properties of optimal transport maps in the absence of regularity assumptions. Indeed, \cite{gigli2011holder} provided a construction in which
\begin{align*}
    \|\mathsf T_0^{\rho\to\mu} - \mathsf  T_0^{\rho\to\nu}\|_{L^2(\rho)}\gtrsim W_2^{1/2}(\mu,\nu)\,.
\end{align*}
{Recently, \cite{letrouit_unstable_2026} has improved upon this lower bound by providing a counterexample for Hölder exponent $\frac{1}{3}$.}
Thus, in general, the best stability bound possible is a $\frac{1}{3}$-H\"older bound.
To this end, \cite{MerDelCha20Stability,delalande2023quantitative,letrouit2024gluing,cazelles2026statistical,merigot2026sharp} made recent progress in closing this gap; see \citet{kitagawa2025stability} for the Riemannian setting. The most recent result is by 
\cite{merigot2026sharp}, who proves that for $\rho$ with density bounded from above, and $\mu,\nu \in \cP(B(0,R))$, 
\begin{align}\label{eq:one_over_four}
    \|\mathsf T_0^{\rho\to\mu} - \mathsf T_0^{\rho\to\nu}\|_{L^2(\rho)}\leq C(d,R)\, W_1^{1/4}(\mu,\nu) \leq C(d,R)\,W_2^{1/4}(\mu,\nu)\,,
\end{align}
where $C(d,R)$ is an explicit constant involving the dimension and the radius.\footnote{For simplicity, we stated the result as holding for measures over a ball of radius $R$, though an analogous result holds when the target measures are supported on a convex, compact subset of $\R^d$.} {We note that the Hölder exponent in the first inequality for the $W_1$ distance is also sharp due to \cite{letrouit_unstable_2026}.}

\paragraph{Stability of entropic transport maps.}
Entropic transport maps are a computationally friendly proxy for estimating optimal transport maps \citep{pooladian2021entropic,pooladian2023minimax,rigollet2025sample}. Entropic transport maps are governed by a regularization parameter $\eps > 0$, written
\begin{align*}
    \mathsf  T_\eps^{\rho\to\mu}(x) \defeq \E_{\pi_\eps^{\rho\mu}}[Y\mid X=x]\,,
\end{align*}
where $\pi_\eps^{\rho\mu}$ is the entropic optimal coupling between $\rho$ and $\mu$ \citep{peyre2019computational, pooladian2021entropic}. Recently, a tight stability bound was established by \cite{divol2025tight}: for any $\rho,\mu,\nu \in \cP(B(0,R))$, 
\begin{align}\label{eq:eot_stab}
    \|\mathsf  T_\eps^{\rho\to\mu} - \mathsf  T_\eps^{\rho\to\nu}\|_{L^2(\rho)}\leq \frac{2R^2}{\eps}\, W_2(\mu,\nu)\,.
\end{align}
This result turns out to be an exponential improvement on prior work by \cite{carlier2024displacement}, who proved this result where the constant scales as $\exp(\eps^{-1})$. Equation~\eqref{eq:eot_stab} implies that the embedding $\mu \mapsto \mathsf  T_\eps^{\rho\to\mu}$ is Lipschitz in $W_2$, which improves upon the $\tfrac14$-H\"older continuity of optimal transport maps (recall \eqref{eq:one_over_four}). Finally, it is worth mentioning that under suitable conditions~\citep[see, e.g.,][]{chewi2023entropic}, the results of \cite{divol2025tight} can be used to recover \eqref{eq:ot_lip_stab} (up to a universal constant) by taking the $\eps \searrow 0$ limit.

\section{Main results}

\subsection{ODE representation and starting point}\label{sec:ode_rep}
We follow the coupling construction of~\citet{chewi2025stability}, but with a notable refinement. Namely,~\citet{chewi2025stability} considered the ODE~\eqref{eq:rev_ou_ode} over a finite time horizon $[0,T]$ and then let $T \to \infty$. However, in Lemma~\ref{lem:km_regular_case}, we show that for $\mu$, $\nu$ satisfying~\eqref{theta}, the $T\to\infty$ limit directly admits an ODE interpretation. Namely, we have the representation
\begin{align}\label{eq:ode_representation}
    \KM^\mu_{\delta\gets} = {\id} + \int_\delta^\infty \nabla \log \frac{q_t^\mu}{\gamma} \circ \KM^\mu_{t\gets}\dd t
\end{align}
for each $\delta > 0$.
Then, we can take $\delta \searrow 0$ using Lemma~\ref{lem:existence_km}.
This leads to the following analogue of~\eqref{eq:stab_fisher_old} in~\cite{chewi2025stability}.

\begin{proposition}
    Suppose that $\mu,\nu \in \cP_2(\R^d)$ satisfy~\eqref{theta}, and let $(\theta_t)_{t\ge 0}$ denote the corresponding constants for $\mu$.
    Then,
\begin{align}\label{eq:stab_start}
    \frac{\dd}{\dd t} \, \|\KM^\mu_{t\gets} - \KM^\nu_{t\gets}\|_{L^2(\gamma)}\geq -\theta_t\,\|\KM^\mu_{t\gets} - \KM^\nu_{t\gets}\|_{L^2(\gamma)} - \sqrt{\FI(q^\nu_t\mmid q^\mu_t)}\,.
\end{align}
\end{proposition}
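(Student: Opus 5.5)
From \eqref{eq:ode_representation}, differentiating in $\delta$ shows that $t\mapsto \KM^\mu_{t\gets}(x)$ satisfies the backward ODE
\[
\frac{\dd}{\dd t}\KM^\mu_{t\gets} = -\nabla \log \frac{q_t^\mu}{\gamma}\circ \KM^\mu_{t\gets}\,,
\]
and likewise for $\nu$. I plan to set $\Delta_t \deq \KM^\mu_{t\gets} - \KM^\nu_{t\gets}$, abbreviate $\psi^\mu_t \deq \nabla\log(q^\mu_t/\gamma)$, and compute
\[
\frac{1}{2}\frac{\dd}{\dd t}\|\Delta_t\|^2_{L^2(\gamma)} = -\E_\gamma\bigl\langle \Delta_t,\, \psi^\mu_t\circ\KM^\mu_{t\gets} - \psi^\nu_t\circ\KM^\nu_{t\gets}\bigr\rangle\,.
\]
Exchanging derivative and integral should be justified by the Lipschitz bounds from Proposition~\ref{prop:lip_km} together with the Tweedie bounds on $\nabla^2\log(q_t/\gamma)$ for $t>0$. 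These give linear growth of $\psi_t$ and hence domination.

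Next I split the difference by adding and subtracting $\psi^\mu_t\circ \KM^\nu_{t\gets}$. For the first piece, $\psi^\mu_t = \nabla\log Q_t[\dd\mu/\dd\gamma]$ has Jacobian $\preceq \theta_t I$ by \eqref{theta}. Integrating along the segment between the two points gives the one-sided Lipschitz bound
\[
\bigl\langle x-y,\, \psi^\mu_t(x)-\psi^\mu_t(y)\bigr\rangle \le \theta_t\|x-y\|^2\,,
\]
so this piece contributes at least $-\theta_t\|\Delta_t\|^2$ to $\frac12\frac{\dd}{\dd t}\|\Delta_t\|^2$ after the minus sign. For the second piece, $\psi^\mu_t-\psi^\nu_t = \nabla\log q^\mu_t - \nabla\log q^\nu_t$. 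Since $(\KM^\nu_{t\gets})_\sharp\gamma = q^\nu_t$, Cauchy--Schwarz gives
\[
\bigl|\E_\gamma\langle \Delta_t, (\psi^\mu_t-\psi^\nu_t)\circ \KM^\nu_{t\gets}\rangle\bigr| \le \|\Delta_t\|_{L^2(\gamma)}\sqrt{\FI(q^\nu_t\mmid q^\mu_t)}\,.
\]
This step is the key one: the pushforward is evaluated at the $\nu$-map, which yields exactly $\FI(q^\nu_t\mmid q^\mu_t)$ in the correct order.

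Combining the two pieces gives
\[
\frac12\frac{\dd}{\dd t}\|\Delta_t\|^2 \ge -\theta_t\|\Delta_t\|^2 - \|\Delta_t\|\sqrt{\FI(q^\nu_t\mmid q^\mu_t)}\,.
\]
Where $\|\Delta_t\|>0$, dividing by $\|\Delta_t\|$ yields \eqref{eq:stab_start}. Where $\|\Delta_t\|=0$, I will interpret the inequality in the sense of Dini derivatives. Alternatively, I can apply the argument to $\sqrt{\|\Delta_t\|^2+\eta}$ and let $\eta\to0$, which gives the integrated form, and that is all that is needed downstream.

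I expect the main obstacle to be technical rather than conceptual. Specifically, I need to justify that the ODE representation holds pointwise and can be differentiated, that the integrands are in $L^2(\gamma)$, and that $\FI(q^\nu_t\mmid q^\mu_t)$ is finite for $t>0$. The smoothness of $q_t$ for $t>0$ and the finite second moments, combined with the regularity statement of Lemma~\ref{lem:km_regular_case}, should handle all three.
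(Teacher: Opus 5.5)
Your proposal is correct and follows the paper's proof essentially line for line. Like the paper, you differentiate $\|\KM^\mu_{t\gets}-\KM^\nu_{t\gets}\|_{L^2(\gamma)}^2$ via the ODE representation and add and subtract $\nabla\log\frac{q_t^\mu}{\gamma}\circ\KM^\nu_{t\gets}$. You then bound the first piece by the one-sided Lipschitz property from \eqref{theta}, bound the second by Cauchy--Schwarz with $(\KM^\nu_{t\gets})_\sharp\gamma=q_t^\nu$, and divide by the norm; your remarks on differentiating under the integral and on the case where the norm vanishes only make explicit technicalities the paper leaves implicit.
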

\begin{proof}
    Indeed,
    \begin{align*}
        \frac{\dd}{\dd t} \,\|\KM^\mu_{t\gets} - \KM^\nu_{t\gets}\|_{L^2(\gamma)}^2
        &= -2\int_{\RR^d}\Bigl\langle \KM^\mu_{t\gets} - \KM^\nu_{t\gets}\,,\, \nabla \log \frac{q_t^\mu}{\gamma} \circ \KM^\mu_{t\gets} - \nabla \log \frac{q_t^\nu}{\gamma} \circ \KM^\nu_{t\gets}\Bigr\rangle\dd \gamma \\[0.25em]
        &= -2\int_{\RR^d}\Bigl\langle \KM^\mu_{t\gets} - \KM^\nu_{t\gets}\,,\, \nabla \log \frac{q_t^\mu}{\gamma} \circ \KM^\mu_{t\gets} - \nabla \log \frac{q_t^\mu}{\gamma} \circ \KM^\nu_{t\gets}\Bigr\rangle\dd \gamma \\[0.25em]
        &\qquad{} -2\int_{\RR^d}\Bigl\langle \KM^\mu_{t\gets} - \KM^\nu_{t\gets}\,,\, \bigl(\nabla \log \frac{q_t^\mu}{\gamma} - \nabla \log \frac{q_t^\nu}{\gamma}\bigr) \circ \KM^\nu_{t\gets}\Bigr\rangle\dd \gamma \\
        &\ge -2\theta_t\,\|\KM^\mu_{t\gets} - \KM^\nu_{t\gets}\|_{L^2(\gamma)}^2 - 2\,\|\KM^\mu_{t\gets} - \KM^\nu_{t\gets}\|_{L^2(\gamma)} \sqrt{\FI(q_t^\nu \mmid q_t^\mu)}\,.
    \end{align*}
    This implies the result.
\end{proof}
Starting from this proposition, we use suitable bounds on the Fisher information between the time marginals of the OU flow in order to establish stability.

A well-known result is de Bruijn's identity along OU channels, which we will use repeatedly. We state it here for reference.
\begin{lemma}
For any $t > 0$, it holds that
\begin{align*}
    \frac{\dd}{\dd t}\,{\mathsf H}(q_t^\nu\mmid q_t^\mu) = -\FI(q_t^\nu\mmid q_t^\mu)\,.
\end{align*}
\end{lemma}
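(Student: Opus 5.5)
The plan is to verify de Bruijn's identity directly by differentiating the relative entropy along the two Fokker--Planck flows. For $t>0$ both $q_t^\nu$ and $q_t^\mu$ are smooth, strictly positive densities: each is a Gaussian convolution of a rescaled measure with finite second moment. They solve the same equation
\begin{align*}
\partial_t q_t = \nabla\cdot\bigl(q_t\,\nabla \log (q_t/\gamma)\bigr)\,.
\end{align*}
Write $h_t \deq \log(q_t^\nu/q_t^\mu)$, so that ${\mathsf H}(q_t^\nu\mmid q_t^\mu) = \int q_t^\nu h_t$. Differentiating formally gives
\begin{align*}
\frac{\dd}{\dd t}{\mathsf H}(q_t^\nu\mmid q_t^\mu) = \int \partial_t q_t^\nu\, h_t + \int \partial_t q_t^\nu - \int \frac{q_t^\nu}{q_t^\mu}\,\partial_t q_t^\mu\,.
\end{align*}
The middle term vanishes by mass conservation. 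In the remaining two terms, substitute the Fokker--Planck equation and integrate by parts. This yields
\begin{align*}
-\int q_t^\nu \bigl\langle \nabla \log \tfrac{q_t^\nu}{\gamma}, \nabla h_t\bigr\rangle + \int q_t^\mu \bigl\langle \nabla\log\tfrac{q_t^\mu}{\gamma}, \nabla \tfrac{q_t^\nu}{q_t^\mu}\bigr\rangle\,.
\end{align*}
Now use $\nabla (q_t^\nu/q_t^\mu) = (q_t^\nu/q_t^\mu)\nabla h_t$. The second term becomes $\int q_t^\nu\langle \nabla\log(q_t^\mu/\gamma),\nabla h_t\rangle$. The $\gamma$ contributions then cancel, and the total is $-\int q_t^\nu\|\nabla h_t\|^2 = -\FI(q_t^\nu\mmid q_t^\mu)$.

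The only real work is justifying these formal steps: differentiation under the integral and boundary terms vanishing in the integrations by parts. I would handle this with the Gaussian-convolution structure. From $q_t^\mu = (e^{-t}\cdot)_\sharp\mu * \cN(0,(1-e^{-2t})I)$ and Tweedie's formula \eqref{eq:tweedie_one}, $\nabla\log q_t^\mu(x)$ is an affine function of $x$ plus a conditional expectation of $X_0$. A standard computation then gives growth bounds, locally uniform in $t>0$, of the form
\begin{align*}
\|\nabla \log q_t^\mu(x)\| \lesssim 1+\|x\| + \E[\|X_0\|\mid X_t=x]\,,
\end{align*}
and $|\log q_t^\mu(x)|\lesssim 1+\|x\|^2$, whose moments are controlled by $\mom(\mu)^2$. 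The same bounds hold for $\nu$. Since $q_t^\nu$ has Gaussian-type tails, all the integrands above are integrable.

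Technically, I would first carry out the computation on $\int \chi_R\, q_t^\nu h_t$ with a smooth cutoff $\chi_R$. I would then let $R\to\infty$ by dominated convergence, using these integrability bounds. Differentiation under the integral is justified by bounding $\partial_t$ of the integrand locally uniformly on compact time intervals in $(0,\infty)$.

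The main obstacle is controlling the term $\int q_t^\nu \|\nabla h_t\|^2$ and the cross terms when neither measure is assumed regular. Even so, by the conditional-expectation representation and Jensen,
\begin{align*}
\E_{q_t^\nu}\|\nabla\log q_t^\mu\|^2 \lesssim 1+\mom(\nu)^2+\E_{q_t^\nu}\bigl[\E_\mu\text{-posterior second moment}\bigr]\,.
\end{align*}
The posterior second moment can be bounded by $\mom(\mu)^2$ times a Gaussian likelihood ratio. I expect this bound to be finite for each fixed $t>0$, which makes the Fisher information finite and closes the argument. As a fallback, I would instead prove the integrated form
\begin{align*}
{\mathsf H}(q_s^\nu\mmid q_s^\mu)-{\mathsf H}(q_t^\nu\mmid q_t^\mu)=\int_s^t \FI(q_r^\nu\mmid q_r^\mu)\dd r
\end{align*}
via truncation and Fatou, which suffices for all later uses.
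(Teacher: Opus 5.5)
The paper gives no proof of this lemma; it simply quotes de Bruijn's identity as well known. Your formal computation (mass conservation for the middle term, integration by parts against $\partial_t q=\nabla\cdot(q\,\nabla\log(q/\gamma))$, cancellation of the $\gamma$-drifts) is the standard derivation and is correct. The weak point is the justification, starting with the step you flag yourself.

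To bound the $\mu$-posterior second moment by ``$\mom(\mu)^2$ times a Gaussian likelihood ratio'', you must bound the ratio uniformly in $y$, i.e.\ by $(2\pi(1-e^{-2t}))^{-d/2}/q_t^\mu(x)$. Integrating this against $q_t^\nu$ produces $\int q_t^\nu/q_t^\mu\,\rd x$, which is $+\infty$ already for $\nu=\mu$. Likewise, moments of $\E[\|X_0\|\mid X_t=x]$ are controlled by $\mom(\mu)$ only when $x\sim q_t^\mu$, whereas every integral in your computation is taken against $q_t^\nu$.

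What is missing is a \emph{pointwise} growth bound for the $\mu$-posterior. Compare its weight on $\{y : e^{-t}\|y\|\ge 2\|x\|+2e^{-t}\mom(\mu)+s\}$ with its weight on $B(0,2\mom(\mu))$, which has $\mu$-mass at least $3/4$ by Markov. This gives
\begin{align*}
\PP\bigl(e^{-t}\|X_0\|\ge 2\|x\|+2e^{-t}\mom(\mu)+s \bigm| X_t=x\bigr)\le \tfrac{4}{3}\exp\Bigl(-\frac{s^2}{2(1-e^{-2t})}\Bigr)\,,\qquad s\ge 0\,.
\end{align*}
By Tweedie, $\|\nabla\log q_t^\mu(x)\|\lesssim_t 1+\|x\|$ and $\|\nabla^2\log q_t^\mu(x)\|_{\rm op}\lesssim_t (1+\|x\|)^2$, with constants depending on $t$ and $\mom(\mu)$, locally uniformly in $t>0$. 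The same holds for $\nu$. This gives $\FI(q_t^\nu\mmid q_t^\mu)<\infty$ using only $\mom(q_t^\nu)<\infty$.

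Also, $q_t^\nu$ does \emph{not} have Gaussian-type tails. It inherits the possibly polynomial tails of $\nu$, so all domination must come from the polynomial bounds above plus second moments. The available bounds on both $|h_t|$ and $|\partial_t q_t^\nu|/q_t^\nu$ are only quadratic in $\|x\|$, so dominating $\partial_t(q_t^\nu h_t)$ on all of $\R^d$ would need a fourth moment of $\nu$.

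So keep the cutoff, integrate the truncated identity over $[s,t]$, and only then let $R\to\infty$. The cutoff errors include $\int\chi_R\,\partial_t q_r^\nu$, which no longer vanishes. They are all at most $R^{-1}\int_{R\le\|x\|\le 2R}(1+\|x\|)^3\,q_r^\nu(\rd x)\lesssim\int_{\|x\|\ge R}(1+\|x\|)^2\,q_r^\nu(\rd x)$. This tends to $0$ uniformly in $r\in[s,t]$ because $\|X_r\|\le\|X_0\|+\|Z\|$, while monotone convergence handles the Fisher information term. The result is $\KL(q_s^\nu\mmid q_s^\mu)-\KL(q_t^\nu\mmid q_t^\mu)=\int_s^t\FI(q_r^\nu\mmid q_r^\mu)\,\rd r$. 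The pointwise derivative then follows from continuity of $r\mapsto\FI(q_r^\nu\mmid q_r^\mu)$, which uses the same bounds.

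Finally, a Fatou-type argument, as in your fallback, would by itself only give $\int_s^t\FI(q_r^\nu\mmid q_r^\mu)\,\rd r\le\KL(q_s^\nu\mmid q_s^\mu)-\KL(q_t^\nu\mmid q_t^\mu)$. That happens to be the only direction the paper actually uses.
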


\subsection{Warm-up: Stability in relative entropy}
Leveraging the log-Sobolev inequality \eqref{eq:lsi}, we obtain almost immediately a stability bound in relative entropy.

\begin{theorem}\label{thm:stab_ent}
For $\mu \in \cP_2(\mathbb{R}^d)$ which satisfies \eqref{theta}, and any $\nu \in \cP_2(\mathbb{R}^d)$, it holds that
\begin{align}\label{eq:ent_stab_bd}
\|\mathsf T^{\mu} - \mathsf T^{\nu}\|_{L^2(\gamma)}^2 \leq 2L_0^2(\mu) \, {\mathsf H}(\nu\mmid\mu)\,,
\end{align}
where $$L_0(\mu) \deq \exp\Bigl(\int_0^\infty \theta_t \dd t\Bigr)\, .$$
\end{theorem}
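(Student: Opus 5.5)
Our plan is to integrate the differential inequality \eqref{eq:stab_start} with an integrating factor, and then control the Fisher information term by combining de Bruijn's identity with the log-Sobolev inequality \eqref{eq:lsi}. We first assume that both $\mu$ and $\nu$ satisfy \eqref{theta}, so that the ODE representation \eqref{eq:ode_representation} and \eqref{eq:stab_start} are available. At the end we remove this assumption on $\nu$ through a regular approximating sequence and Lemma~\ref{lem:existence_km}.

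Set $D_t \deq \|\KM^\mu_{t\gets} - \KM^\nu_{t\gets}\|_{L^2(\gamma)}$. Both maps converge to the identity as $t\to\infty$ by \eqref{eq:ode_representation}, so $D_t \to 0$. Rewrite \eqref{eq:stab_start} as $\frac{\dd}{\dd t}\bigl(e^{\int_0^t \theta_s \dd s} D_t\bigr) \ge -e^{\int_0^t\theta_s\dd s}\sqrt{\FI(q_t^\nu\mmid q_t^\mu)}$ and integrate from $\delta$ to $\infty$. This gives
\[ D_\delta \le \int_\delta^\infty e^{\int_\delta^t \theta_s\dd s}\sqrt{\FI(q_t^\nu\mmid q_t^\mu)}\dd t \le \int_\delta^\infty L_\delta(\mu)\, L_t(\mu)^{-1}\sqrt{\FI(q_t^\nu\mmid q_t^\mu)}\dd t\,. \]
The key point is to avoid Cauchy--Schwarz in the naive form. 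Writing $h_t \deq \KL(q_t^\nu\mmid q_t^\mu)$, de Bruijn gives $\FI = -\dot h_t$. The LSI \eqref{eq:lsi} for $q_t^\mu$ gives $\sqrt{\FI} \ge \sqrt{2 h_t}/L_t$, hence
\[ \sqrt{\FI} = \frac{-\dot h_t}{\sqrt{\FI}} \le \frac{L_t\,(-\dot h_t)}{\sqrt{2h_t}} = -L_t \sqrt 2\,\frac{\dd}{\dd t}\sqrt{h_t}\,. \]
The factor $L_t$ cancels the $L_t^{-1}$ from the integrating factor. We therefore obtain $D_\delta \le \sqrt2\, L_\delta(\mu)\,(\sqrt{h_\delta} - \lim_{t\to\infty}\sqrt{h_t}) \le \sqrt{2}\,L_\delta(\mu)\sqrt{\KL(q_\delta^\nu\mmid q_\delta^\mu)}$. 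The data processing inequality gives $h_\delta \le \KL(\nu\mmid\mu)$, and $L_\delta \le L_0$ since $\theta_s\ge 0$ may be assumed (if needed, replace $\theta_s$ by $\max(\theta_s,0)$, or note directly that $L_\delta\to L_0$). Squaring and letting $\delta\searrow 0$ via Lemma~\ref{lem:existence_km} yields \eqref{eq:ent_stab_bd}.

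The main obstacle is technical rather than conceptual. First, one must justify differentiability of $D_t$ and handle the set where $h_t=0$ or $\FI=0$ in the chain-rule step. There we simply use $\sqrt{\FI}\le L_t\sqrt2\,(-\frac{\dd}{\dd t}\sqrt{h_t})$ wherever $h_t>0$; once $h_t$ vanishes it stays zero and the integrand vanishes. Second, for general $\nu$ (not satisfying \eqref{theta}), we take a regular approximating sequence $\nu_n$ satisfying \eqref{theta}, apply the bound at level $\delta>0$, and pass to the limit. This requires $\KL(q_\delta^{\nu_n}\mmid q_\delta^\mu)\to \KL(q_\delta^{\nu}\mmid q_\delta^\mu)$, or at least a limsup bound by $\KL(\nu\mmid\mu)$. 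We expect that the properties in Definition~\ref{def:reg_approx} together with the smoothing at time $\delta$ supply this. If they do not, we would instead choose $\nu_n = q^\nu_{1/n}$-type approximations, for which convexity or data processing gives $\KL(q_\delta^{\nu_n}\mmid q_\delta^\mu)$ controlled directly. We may assume $\KL(\nu\mmid\mu)<\infty$, since otherwise there is nothing to prove.
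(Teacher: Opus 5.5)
Your proposal is correct and follows the paper's proof essentially step for step. The paper also applies Grönwall to \eqref{eq:stab_start} to reach \eqref{eq:stab_FIs}, proves $\sqrt{\FI(q_t^\nu\mmid q_t^\mu)}\le -\sqrt2\,L_t\,\frac{\dd}{\dd t}\sqrt{\KL(q_t^\nu\mmid q_t^\mu)}$ via de Bruijn plus the LSI so that $L_t$ cancels, then uses data processing, a regular approximating sequence and $\delta\searrow0$. Two small fixes: the limit step you left open is settled in the paper by the $W_2$-continuity of relative entropy under Gaussian smoothing (Polyanskiy--Wu), so no special approximating sequence is needed; and you should use $L_\delta\to L_0$ rather than $L_\delta\le L_0$, since $\theta_s$ can be negative (e.g.\ $\alpha$-strongly log-concave $\mu$ with $\alpha>1$), and replacing $\theta_s$ by its positive part would weaken the stated constant.
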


Notice that this immediately provides a strengthening of Talagrand's $T_2$-inequality by introducing a tighter lower bound on the relative entropy, as
\begin{align*}
    \tfrac{1}{2}W_2^2(\mu,\nu) \leq \tfrac12 \|\mathsf T^{\mu} - \mathsf T^{\nu}\|_{L^2(\gamma)}^2 \leq L_{0}^2(\mu) \, {\mathsf H}(\nu\mmid\mu)\,,
\end{align*}
where the leftmost inequality follows from a trivial coupling argument. Also note that Theorem~\ref{thm:stab_ent} implies (and sharpens) the original stability bound by \cite{chewi2025stability} by applying the log-Sobolev inequality, which is satisfied for $\mu$ thanks to \eqref{theta}. 

\begin{proof}[Proof of Theorem~\ref{thm:stab_ent}]
First, consider $\nu$ which also satisfies~\eqref{theta}. Below, $(\theta_t)_{t\ge 0}$ denotes the corresponding constants for $\mu$ only.

Starting from \eqref{eq:stab_start}, we directly apply Gr\"onwall's inequality  on the interval $[\delta,T]$ to obtain
\begin{align*}
    \|\mathsf T_{T\gets}^\mu - \mathsf T_{T\gets}^\nu\|_{L^2(\gamma)} &\geq \exp\Bigl(-\int_\delta^T \theta_s \dd s\Bigr)\,\|\mathsf T_{\delta\gets}^\mu - \mathsf T_{\delta\gets}^\nu\|_{L^2(\gamma)} 
    \\[0.25em]
    &\qquad{}- \int_\delta^T \exp\Bigl(-\int_s^T \theta_{r}\dd r\Bigr)\FI(q^\nu_{s}\mmid q^\mu_{s})^{1/2} \dd s\,.
\end{align*}
Letting $T \rightarrow \infty$ then gives
\begin{align}\label{eq:stab_FIs}
     \|\mathsf T_{\delta\gets}^{\mu} - \mathsf T_{\delta\gets}^{\nu}\|_{L^2(\gamma)} \leq L_\delta(\mu) \int_\delta^\infty \exp\Bigl(-\int_s^\infty \theta_{r}\dd r\Bigr) \FI(q^\nu_{s}\mmid q^\mu_{s})^{1/2}\dd s \,,
\end{align}
where $L_\delta(\mu)$ has been defined in \eqref{eq:def-L_t}.
We now claim that
\begin{align*}
    \FI(q^\nu_{t}\mmid q^\mu_{t})^{1/2} \leq -\sqrt{2}L_t(\mu)\, \frac{\dd}{\dd t}\, {\mathsf H}(q^\nu_{t}\mmid q^\mu_{t})^{1/2}\,.
\end{align*}
To prove it, observe that since $q_{t}^\mu = (\mathsf T^{\mu}_{t\gets})_{\sharp}\gamma$ with $\mathsf T^{\mu}_{t\gets}$ being Lipschitz continuous with constant $L_{t}(\mu)$,
$q_{t}^\mu$ satisfies the following log-Sobolev inequality (recall \eqref{eq:lsi}):
\begin{align*}
{\mathsf H}(q_{t}^\nu\mmid q_{t}^\mu) \leq \frac{L^2_{t}(\mu)}{2}\FI(q_{t}^\nu\mmid q_{t}^\mu)\,.
\end{align*}
Using de Bruijn's identity and the above log-Sobolev inequality, it holds that
\begin{align*}
    -\sqrt{2}L_t\, \frac{\dd}{\dd t}\, {\mathsf H}(q^\nu_{t}\mmid q^\mu_{t})^{1/2} 
    = L_t\, \frac{\FI(q_{t}^\nu\mmid q_{t}^\mu)}{\sqrt{2}\,{\mathsf H}(q^\nu_{t}\mmid q^\mu_{t})^{1/2} }
    \geq \FI(q^\nu_{t}\mmid q^\mu_{t})^{1/2}\,,
\end{align*}
which is the claim.

Substituting the claim into \eqref{eq:stab_FIs} then yields
\begin{align*}
     \|\mathsf T_{\delta\gets}^{\mu} - \mathsf T_{\delta\gets}^{\nu}\|_{L^2(\gamma)} &\leq -\sqrt{2}L_\delta(\mu) \int_\delta^\infty \exp\Bigl(-\int_t^\infty \theta_s \dd s\Bigr) \exp\Bigl(\int_t^\infty \theta_s \dd s\Bigr)\,\frac{\dd}{\dd t}\,{\mathsf H}(q_t^\nu\mmid q_t^\mu)^{1/2} \dd t \\
    &= -\sqrt{2}L_\delta(\mu)\int_\delta^\infty \frac{\dd }{\dd t}\,\mathsf H(q_t^\nu\mmid q_t^\mu)^{1/2}\dd t\\
    &= \sqrt{2} L_\delta(\mu)\,\bigl( {\mathsf H}(q_\delta^\nu\mmid q_\delta^\mu)^{1/2} - {\mathsf H}(\gamma\mmid \gamma)^{1/2}\bigr)\,.
\end{align*}
Now, for $\nu \in \cP_2(\R^d)$, let $\{\nu_n\}_{n\in\N}$ be a regular approximating sequence for $\nu$ as in Definition~\ref{def:reg_approx}.
Applying the above bound to $\nu_n$,
\begin{align*}
     \|\mathsf T_{\delta\gets}^{\mu} - \mathsf T_{\delta\gets}^{\nu_n}\|_{L^2(\gamma)}
    \le \sqrt{2} L_\delta(\mu)\,{\mathsf H}(q_\delta^{\nu_n}\mmid q_\delta^\mu)^{1/2}\,.
\end{align*}
As $n\to\infty$, the LHS converges to $\|\KM^\mu_{\delta\gets} - \KM^\nu_{\delta\gets}\|_{L^2(\gamma)}$, by Lemma~\ref{lem:existence_km}.
As for the RHS, as shown in~\citet[Corollary 4]{PolWu16WassCont}, the heat flow (or equivalently, the OU process) induces $W_2$ continuity of the relative entropy, and thus $\KL(q_\delta^{\nu_n} \mmid q_\delta^\mu) \to \KL(q_\delta^\nu \mmid q_\delta^\mu)$.
This yields
\begin{align*}
     \|\mathsf T_{\delta\gets}^{\mu} - \mathsf T_{\delta\gets}^{\nu}\|_{L^2(\gamma)}
    \le \sqrt{2} L_\delta(\mu)\,{\mathsf H}(q_\delta^{\nu}\mmid q_\delta^\mu)^{1/2}
    \le \sqrt{2} L_\delta(\mu)\,{\mathsf H}(\nu\mmid \mu)^{1/2}\,,
\end{align*}
by the data-processing inequality.
Finally, let $\delta\searrow 0$ using Lemma~\ref{lem:existence_km} again.
\end{proof}

\begin{remark}
Our initial proof of Theorem \ref{thm:stab_ent} is inspired by the proof of Talagrand's inequality in \cite{otto2000generalization} using the OU flow. In fact, carefully tracing their proof, one can recover the result of Theorem \ref{thm:stab_ent} for $\mu = \gamma$ (and hence $\theta_s =0$). This proof (included in Appendix~\ref{sec:proof_old} for completeness) generalizes their Lyapunov function to accommodate for $\mu$ satisfying \eqref{theta}, which might be of independent interest. Using the log-Sobolev inequality at the right moment yields the following differential inequality
\begin{align*}
&\frac{\dd}{\dd t} \bigl[ \|\KM^\mu_{t\gets} - \KM^\nu_{t\gets}\|_{L^2(\gamma)} - (2L^2_{t}(\mu)\,{\mathsf H}(q^\nu_{t}\mmid q^\mu_{t}))^{1/2} \bigr] \\
&\qquad \qquad \geq -\theta_{t}\,\bigl[\|\KM^\mu_{t\gets} - \KM^\nu_{t\gets}\|_{L^2(\gamma)} - (2L^2_{t}(\mu)\,{\mathsf H}(q^\nu_{t}\mmid q^\mu_{t}))^{1/2}\bigr]\,,
\end{align*}

and one concludes with Grönwall's inequality and $T \rightarrow \infty$.
\end{remark}

\subsection{Near-Lipschitz stability in \texorpdfstring{$2$}{2}-Wasserstein}

Now we leverage the regularizing effect of the heat flow to deduce stability of the Kim--Milman flow map with respect to the $2$-Wasserstein distance. Our goal is to prove the following result.

\begin{theorem}[Main theorem]\label{thm:wass_stab}
    Suppose $\mu$ satisfies \eqref{theta} and $\nu \in \cP_2(\R^d)$ with $\mathsf m_2(\mu),\mathsf m_2(\nu) \leq M$. In addition, let $\mathsf C_\mu \geq 0$ be such that for all $0\leq s\leq t$, it holds that
    \begin{align}\label{eq:int_cond}
        \int_s^t \theta_u \dd u \leq \mathsf C_\mu\,.
    \end{align}
    Then,
    \begin{align*}
        \|\mathsf T^{\mu} - \mathsf T^{\nu}\|_{L^2(\gamma)} \leq
        W_2(\mu,\nu)\exp(3\mathsf C_\mu) \log\Bigl[e^2\max\Bigl\{1,\, \frac{4M^2 + 8d}{W_2^2(\mu,\nu)\exp(6\mathsf C_\mu)}\Bigr\}\Bigr]\,.
    \end{align*}
\end{theorem}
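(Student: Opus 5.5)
We start from the integrated Grönwall bound \eqref{eq:stab_FIs}, first for $\nu$ also satisfying \eqref{theta} and then for general $\nu$ via a regular approximating sequence and Lemma~\ref{lem:existence_km}, exactly as in the proof of Theorem~\ref{thm:stab_ent}. Using \eqref{eq:int_cond}, all exponential prefactors are bounded by $\exp(\mathsf C_\mu)$. This gives
\[
\|\KM^\mu_{\delta\gets}-\KM^\nu_{\delta\gets}\|_{L^2(\gamma)} \le e^{\mathsf C_\mu}\int_\delta^\infty \FI(q_s^\nu\mmid q_s^\mu)^{1/2}\dd s\,,
\]
possibly with an extra $e^{\mathsf C_\mu}$ from $L_\delta$. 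We then split the time integral at a threshold $\tau>0$ to be optimized.

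\textbf{Large times $s\ge\tau$.} The plan is to bound $\int_\tau^\infty \FI^{1/2}$ by $W_2(\mu,\nu)$ with no logarithmic loss, as in the proof of Theorem~\ref{thm:stab_ent}. The LSI for $q_s^\mu$ with constant $L_s^2\le e^{2\mathsf C_\mu}$ and de Bruijn give
\[
\int_\tau^\infty \FI^{1/2}\dd s\le \sqrt2\, e^{\mathsf C_\mu}\,\KL(q_\tau^\nu\mmid q_\tau^\mu)^{1/2}.
\]
To control $\KL(q_\tau^\nu\mmid q_\tau^\mu)$ by $W_2$, we use a log-Harnack (reverse transport) inequality for the OU semigroup. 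By joint convexity along an optimal coupling of $(\mu,\nu)$, it bounds $\KL(q_\tau^\nu\mmid q_\tau^\mu)\le \frac{e^{-2\tau}}{2(1-e^{-2\tau})}W_2^2(\mu,\nu)$, which is of order $W_2^2/(4\tau)$ for small $\tau$. This yields a contribution of order $e^{2\mathsf C_\mu}W_2/\sqrt{\tau}$.

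\textbf{Small times $s\in(\delta,\tau)$.} We need an estimate of $\FI(q_s^\nu\mmid q_s^\mu)^{1/2}$ that is integrable near $0$ and independent of $W_2$. The first tool is Tweedie \eqref{eq:tweedie_one}, which writes each score as $-\frac{e^{-s}}{1-e^{-2s}}\E[e^{-s}X_s-X_0\mid X_s]$. Together with the fact that the Jacobian semiconcavity from \eqref{theta} makes $\nabla\log(q_s^\mu/\gamma)$ $\theta_s$-one-sided Lipschitz, this should give $\FI(q_s^\nu\mmid q_s^\mu)\lesssim (M^2+d)/s^{2}$ after tracking constants. That bound only yields a logarithmically divergent integral $\int_\delta^\tau (M^2+d)^{1/2}/s\,\dd s$, and balancing against the large-time part then gives the $W_2\log(\cdot)$ form. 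Concretely, I would instead bound $\int_\delta^\tau \FI^{1/2}$ by Cauchy–Schwarz on dyadic blocks $[2^{-k-1}\tau,2^{-k}\tau]$. On each block de Bruijn gives $\int\FI\le \KL(q_{a}^\nu\mmid q_a^\mu)$, and $\KL(q_a^\nu\mmid q_a^\mu)\lesssim \min\{W_2^2/a,\,(M^2+d)/a\}$ by log-Harnack or by a crude bound, so each block contributes $\lesssim\min\{W_2,\sqrt{M^2+d}\}$. Summing, the blocks where $a\gtrsim W_2^2/(M^2+d)$ each cost $W_2$, and there are about $\log\frac{M^2+d}{W_2^2}$ of them. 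The remaining ones are controlled by the small-time part of the transport directly: $\|\KM^\mu_{a\gets}-\KM^\mu\|_{L^2(\gamma)}\lesssim\sqrt{a(M^2+d)}$ by the ODE representation \eqref{eq:ode_representation} and the second-moment bound on $q_a$. Equivalently, we stop the integral at $\delta_0\asymp W_2^2e^{-6\mathsf C_\mu}/(4M^2+8d)$ and bound $\|\KM^\mu-\KM^\mu_{\delta_0\gets}\|+\|\KM^\nu-\KM^\nu_{\delta_0\gets}\|\lesssim W_2$.

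\textbf{Assembly and main obstacle.} The total is $e^{3\mathsf C_\mu}W_2\,(2+\log\frac{4M^2+8d}{W_2^2e^{6\mathsf C_\mu}})$, matching the stated form with $\log e^2=2$; the $\max\{1,\cdot\}$ covers the case in which no small-time region is needed. The main obstacle is the short-time endpoint estimate $\|\KM^\nu_{\delta\gets}-\KM^\nu\|_{L^2(\gamma)}\lesssim\sqrt{\delta(M^2+d)}$ for general $\nu$, which is not covered by \eqref{theta}. I would try to prove it with the ODE velocity $\nabla\log(q_t^\nu/\gamma)$, whose $L^2(q_t^\nu)$ norm by Tweedie is $\lesssim\sqrt{(M^2+d)/t}$; integrating gives $\sqrt{\delta}$ scaling, passed to general $\nu$ through Lemma~\ref{lem:existence_km}. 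Tracking constants so that the thresholds produce exactly $4M^2+8d$ and $e^{6\mathsf C_\mu}$ is routine bookkeeping.
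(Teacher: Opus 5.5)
Your proof is correct, and it takes a genuinely different route from the paper in the intermediate time regime.

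\textbf{What coincides with the paper.} The outer structure is the same. You use the Gr\"onwall bound \eqref{eq:stab_FIs} above an early-stopping time $\delta_0$. Below $\delta_0$ you use the triangle inequality with $\|\KM^\nu_{\delta_0\gets}-\KM^\nu\|_{L^2(\gamma)}\le\int_0^{\delta_0}\FI(q_t^\nu\mmid\gamma)^{1/2}\dd t$, which is Proposition~\ref{prop:ineq_2}. The general-$\nu$ case you flag as the main obstacle is not really one: it is handled by the same approximation as in the proof of Lemma~\ref{lem:existence_km}. Lemma~\ref{lem:FI_abs_bound} also gives the sharper bound $M\delta_0+\sqrt{2d\delta_0}$ in place of your $\sqrt{\delta_0(M^2+d)}$.

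\textbf{Where you differ.} The paper proves a pointwise bound $\FI(q_t^\nu\mmid q_t^\mu)\le e^{4\mathsf C_\mu}W_2^2(\mu,\nu)/(e^t-1)^2$. It does so by combining log-Harnack and de Bruijn with the Fisher-information decay estimate \eqref{eq:andre_corollary}. That step uses \eqref{theta} a second time, accounts for two of the three factors $e^{\mathsf C_\mu}$, and is then integrated in closed form. You avoid pointwise control altogether. Cauchy--Schwarz on $[a,2a]$, with de Bruijn and log-Harnack, gives $\int_a^{2a}\FI(q_s^\nu\mmid q_s^\mu)^{1/2}\dd s\le\sqrt{a\,\KL(q_a^\nu\mmid q_a^\mu)}\le W_2(\mu,\nu)/2$ for every $a>0$ and arbitrary $\mu,\nu$. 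So each dyadic scale costs $O(W_2)$, and the logarithm simply counts scales above $\delta_0$.

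\textbf{What each route buys.} Your route is more elementary. The regularity of $\mu$ enters only through the Gr\"onwall prefactor $e^{\mathsf C_\mu}$ (plus your LSI tail, which can be dropped). As a result you get $e^{\mathsf C_\mu}$ rather than $e^{3\mathsf C_\mu}$ in front of the logarithm. The paper's route instead yields a Fisher-information estimate that is sharp in $t$ (Section~\ref{sec:lower_bound}), and explicit constants with no block rounding.

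\textbf{Bookkeeping for the write-up.} The threshold should be $\delta_0\asymp W_2^2e^{6\mathsf C_\mu}/(4M^2+8d)$, with a positive exponent, not $e^{-6\mathsf C_\mu}$; otherwise you do not get the stated argument of the logarithm. The exact constants come out most cleanly from a weighted version of your idea:
$$\bigl(\textstyle\int_{\delta_0}^\infty\FI^{1/2}\dd s\bigr)^2\le\log\frac{1}{1-e^{-\delta_0}}\int_{\delta_0}^\infty(e^s-1)\,\FI\dd s.$$
Integrate the last integral by parts via de Bruijn and apply log-Harnack. This yields $\int_{\delta_0}^\infty\FI^{1/2}\dd s\le W_2(\mu,\nu)\log\frac{1}{1-e^{-\delta_0}}$, after which the paper's final optimization applies verbatim.
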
 

We give some remarks before continuing to the proof.

In known cases where $\mu$ satisfies \eqref{theta}, condition \eqref{eq:int_cond} can be directly inferred; see Section~\ref{sec:ex} below for examples. 

We emphasize that \eqref{theta} does not imply bi-Lipschitzness of $\mathsf T^{\mu}$ but merely Lipschitzness. It is perhaps surprising that nearly Lipschitz stability in $2$-Wasserstein is possible under this assumption, given that the closest result for optimal transport maps requires bi-Lipschitzness.

As for the log-factor, we believe this is an artifact of our proof technique and do not believe it is necessary. See Section~\ref{sec:lower_bound} for more discussions on this point.

\subsubsection{Proof via early stopping}

To prove stability in relative entropy, we leveraged the log-Sobolev inequality in order to suitably bound the time-integrated (square-root of the) Fisher information by the relative entropy. In order to obtain stability bounds in Wasserstein distance, it is hence natural to use the regularizing effect along the OU semigroup comparing Fisher information and Wasserstein distance. This is captured in the following lemma.
\begin{lemma}\label{lem:W2-reg-OU}
Let $\mu, \nu \in \cP_2(\RR^d)$. Then
\begin{itemize}
\item For any $t > 0$,
\begin{equation}\label{eq:H<W2}
    {\mathsf H}(q_{t}^\nu \mmid q_{t}^\mu) \leq \frac{1}{2(e^{2t}-1)}\, W_2^2(\nu,\mu)\,.
\end{equation}
\item If in addition $\mu$ satisfies \eqref{theta}, then for any $t > t_0 > 0$,
\begin{equation}\label{eq:FI<W2}
    {\FI}(q_{t}^\nu\mmid q_{t}^\mu) \le \frac{1}{c(t)}\, W_2^2(\nu,\mu)\,,
\end{equation}
where we define
\begin{align}\label{eq:ct}
    c(t) = 2(e^{2t_0}-1) \int_{t_0}^t \exp\Bigl(\int_s^t 2(1-2\theta_r)\dd r\Bigr)\dd s\,.
\end{align}
\end{itemize}
\end{lemma}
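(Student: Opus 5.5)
For the first bullet, \eqref{eq:H<W2}, I would use a log-Harnack inequality for the OU semigroup. Concretely, $Q_t\log f(x)\le \log Q_t f(y)+\frac{e^{-2t}\|x-y\|^2}{2(1-e^{-2t})}$. An equivalent and cleaner route is a direct coupling/convexity argument. Take an optimal coupling $\pi$ of $(\nu,\mu)$ and write $q_t^\nu=\int \cN(e^{-t}x,(1-e^{-2t})I)\dd\nu(x)$, and similarly for $\mu$. Joint convexity of relative entropy, applied to the mixture over $\pi$, gives
\begin{align*}
\mathsf H(q_t^\nu\mmid q_t^\mu)\le \int \mathsf H\bigl(\cN(e^{-t}x,(1-e^{-2t})I)\mmid \cN(e^{-t}y,(1-e^{-2t})I)\bigr)\dd\pi(x,y)\,.
\end{align*}
Each term equals $\frac{e^{-2t}\|x-y\|^2}{2(1-e^{-2t})}=\frac{\|x-y\|^2}{2(e^{2t}-1)}$, and integrating gives $W_2^2/(2(e^{2t}-1))$. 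Only finiteness of second moments is needed, so this step is routine.

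For the second bullet, I would differentiate the relative Fisher information along the OU flow, using the structure of \eqref{theta}, and obtain a differential inequality of the form
\begin{align*}
\frac{\dd}{\dd t}\FI(q_t^\nu\mmid q_t^\mu)\le -2(1-2\theta_t)\,\FI(q_t^\nu\mmid q_t^\mu)\,.
\end{align*}
To derive it, set $h_t=\dd q_t^\nu/\dd q_t^\mu$. Since both measures evolve under the same linear Fokker--Planck equation, $h_t$ solves the backward Kolmogorov equation for the time-inhomogeneous diffusion with drift $-x+2\nabla\log q_t^\mu$, that is, drift $x+2\nabla\log(q_t^\mu/\gamma)$. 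A Bakry--Émery $\Gamma_2$ computation then gives $\frac{\dd}{\dd t}\FI=-2\E_{q_t^\nu}[\|\nabla^2\log h_t\|^2_{\rm HS}]-2\E_{q_t^\nu}\langle\nabla\log h_t,(-\nabla^2\log q_t^\mu)\nabla\log h_t\rangle$ (up to the precise sign bookkeeping). Next, $-\nabla^2\log q_t^\mu = I-(I+\nabla^2\log q_t^\mu)\succeq (1-\theta_t)I$. Dropping the Hessian term therefore yields decay at rate $2(1-\theta_t)$, and rate $2(1-2\theta_t)$ is weaker, hence also valid. The factor $2\theta$ may come out naturally once the exact computation is done; I would just aim for an inequality at least as strong as the stated one. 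This derivative computation, with its justification via smoothness of $q_t$ for $t>0$ and the usual integration-by-parts and moment conditions, is the main obstacle.

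With the differential inequality in hand, Grönwall gives $\FI(q_t^\nu\mmid q_t^\mu)\ge \exp(\int_s^t 2(1-2\theta_r)\dd r)\FI(q_s^\nu\mmid q_s^\mu)$ for $t_0\le s\le t$, which is monotonicity in the useful direction. Integrating this in $s$ over $[t_0,t]$ and applying de Bruijn's identity gives
\begin{align*}
\FI(q_t^\nu\mmid q_t^\mu)\int_{t_0}^t \exp\Bigl(\int_s^t 2(1-2\theta_r)\dd r\Bigr)\dd s\le \int_{t_0}^t \FI(q_s^\nu\mmid q_s^\mu)\dd s = \mathsf H(q_{t_0}^\nu\mmid q_{t_0}^\mu)-\mathsf H(q_t^\nu\mmid q_t^\mu)\le \mathsf H(q_{t_0}^\nu\mmid q_{t_0}^\mu)\,.
\end{align*}
Bounding the right-hand side by \eqref{eq:H<W2} at time $t_0$ gives exactly $\FI(q_t^\nu\mmid q_t^\mu)\le W_2^2(\nu,\mu)/c(t)$, with $c(t)$ as in \eqref{eq:ct}.

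Finally, if $\nu$ is irregular, I would first prove the result for regular approximations $\nu_n$ in the sense of Definition~\ref{def:reg_approx}, and then pass to the limit. This uses the $W_2$-continuity of $\mathsf H$ after heat-flow smoothing, together with lower semicontinuity of $\FI$.
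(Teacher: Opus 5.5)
Your first bullet is correct and takes a slightly different, more self-contained route than the paper. Instead of citing the log-Harnack inequality, you combine joint convexity of $\mathsf H$ over an optimal coupling with the explicit Gaussian relative entropy. This is the entropy-side (dual) form of the same inequality and gives the same constant. The end of your second bullet (Gr\"onwall, integrating over $[t_0,t]$, de Bruijn, then \eqref{eq:H<W2} at time $t_0$) is exactly the paper's argument.

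The gap is the differential inequality for the Fisher information, which you rightly call the crux but derive incorrectly. With $h_t = \dd q_t^\nu/\dd q_t^\mu$, one finds $\partial_t h_t = \Delta h_t + \langle b_t,\nabla h_t\rangle$ with $b_t = x + 2\nabla\log q_t^\mu = -x + 2\nabla\log(q_t^\mu/\gamma)$. Neither of the two drifts you wrote is this, and they also disagree with each other. More importantly, your curvature matrix $-\nabla^2\log q_t^\mu$ is what the Bakry--\'Emery computation gives for a Langevin diffusion relative to its own \emph{stationary} measure. Here both measures follow the OU flow and the reference $q_t^\mu$ is not stationary. Accounting for this produces an extra term $-(I+\nabla^2\log q_t^\mu)$. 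The correct identity, which is the lemma of Wibisono that the paper invokes, is
\begin{align*}
\frac{\dd}{\dd t}\FI(q_t^\nu\mmid q_t^\mu) = -2\,\E_{q_t^\nu}\|\nabla^2\log h_t\|_{\rm HS}^2 - 2\,\E_{q_t^\nu}\bigl\langle \nabla\log h_t,\,(-2\nabla^2\log q_t^\mu - I)\,\nabla\log h_t\bigr\rangle\,.
\end{align*}
Equivalently, it is $-2\,\E_{q_t^\nu}$ of the $\Gamma_2$ of $\log h_t$ for the generator $\Delta + \langle b_t,\nabla\cdot\rangle$, whose drift Jacobian is $\nabla b_t = I + 2\nabla^2\log q_t^\mu$. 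Since $-2\nabla^2\log q_t^\mu - I = I - 2(I+\nabla^2\log q_t^\mu)\succeq (1-2\theta_t)\,I$, the stated rate follows directly.

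Your claimed rate $2(1-\theta_t)$ is not just unproven but false. Take $\mu = \cN(0,s^2I)$, $\nu = \cN(m,s^2I)$, and set $\sigma_t^2 = 1 + e^{-2t}(s^2-1)$. Then $\nabla\log h_t \equiv e^{-t}m/\sigma_t^2$, so $\FI(q_t^\nu\mmid q_t^\mu) = e^{-2t}\|m\|^2/\sigma_t^4$. Differentiating gives $\frac{\dd}{\dd t}\FI = -2(2\sigma_t^{-2}-1)\FI = -2(1-2\theta_t)\FI$, with the sharp value $\theta_t = 1-\sigma_t^{-2}$. So the factor $2\theta_t$ is exact, not slack. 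For $s^2=4$ the Fisher information actually increases near $t=0$, whereas your formula predicts decay.

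Separately, your reduction ``rate $2(1-\theta_t)$ implies rate $2(1-2\theta_t)$'' requires $\theta_t\ge 0$, which \eqref{theta} does not impose. For instance, for $\alpha$-strongly log-concave $\mu$ with $\alpha>1$, \eqref{eq:theta_slc} gives $\theta_t<0$. If you replace your derivative formula by the identity above (or cite it), the rest of your proof goes through as written.
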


\begin{proof}
The first bound corresponds to the Harnack inequality introduced by \cite{wang1997logarithmic}; more precisely, it is the log-Harnack inequality due to~\citet{BobGenLed01Hyper}. See also~\citet[Corollary 3.11]{altschuler_shifted_2025} for the version we use.

For the second bound, we use the following result from {\citet[Lemma 2]{wibisono2025mixing}} 
to give an upper bound on the change in relative Fisher information along OU channels: For any $t > 0$, it holds that
    \begin{align*}
        \frac{\dd}{\dd t}\FI(q^\nu_t\mmid q^\mu_t) \leq - 2\,\E_{q_t^\nu}\Bigl\langle \nabla \log \frac{q_t^\nu}{q_t^\mu}\,,\,(-2\nabla^2 \log q_t^\mu - I)\,\nabla \log \frac{q_t^\nu}{q_t^\mu} \Bigr\rangle\,.
    \end{align*}
In particular, if $\mu$ is such that \eqref{theta} holds, then an immediate corollary is the bound
\begin{align}\label{eq:andre_corollary}
\frac{\dd}{\dd t} \FI(q_t^\nu\mmid q_t^\mu) \leq -2(1-2\theta_t)\FI(q_t^\nu\mmid q_t^\mu)\,.
\end{align}
Gr\"onwall's inequality on $[s,t]$, $0 < s \leq t$, then gives
\begin{align*}
    {\FI}(q_{s}^\nu\mmid q_{s}^\mu)
    \ge {\FI}(q_t^\nu\mmid q_{t}^\mu) \exp\Bigl(\int_s^t 2(1-2\theta_r)\dd r\Bigr)\,.
\end{align*}
Then, for $0 < t_0 < t$, we have by de Bruijn's identity
\begin{align*}
    {\mathsf H}(q_{t_0}^\nu\mmid q_{t_0}^\mu) &\geq {\mathsf H}(q_{t_0}^\nu\mmid q_{t_0}^\mu) - {\mathsf H}(q_{t}^\nu\mmid q_{t}^\mu) \\
    &=  \int_{t_0}^t {\FI}(q_{s}^\nu\mmid q_{s}^\mu)\dd s \\
    &\ge {\FI}(q_{t}^\nu\mmid q_{t}^\mu) \int_{t_0}^t \exp\Bigl(\int_s^t 2(1-2\theta_r)\dd r\Bigr)\dd s\,.
\end{align*}
We conclude by rearranging and using \eqref{eq:H<W2}.
\end{proof}

Ideally, we start from \eqref{eq:stab_start} and use \eqref{eq:FI<W2}. Unfortunately, the right-hand side of \eqref{eq:FI<W2} is not integrable at $0$, albeit barely. Thus, we require  a different approach, based on ``early stopping''.
For $0 < \varepsilon \le \delta$, we consider
\begin{align*}
    \|\KM^\mu_{\eps\gets} -\KM^\nu_{\eps\gets}\|_{L^2(\gamma)} = \|\KM^\mu_{\delta\gets} - \KM^\nu_{\delta\gets}\|_{L^2(\gamma)} - \int_\eps^\delta \frac{\dd}{\dd t}\, \|\KM^\mu_{t\gets} - \KM^\nu_{t\gets}\|_{L^2(\gamma)} \dd t\,,
\end{align*}
and later perform a trade-off to determine the choice of $\delta > 0$. 
Then, we send $\varepsilon\searrow 0$.

We bound the first term in a straightforward manner by employing the regularization of the Fisher information by the heat flow.
In what follows, $(\theta_t)_{t\ge 0}$ always refers to the regularity constants for $\mu$ (not for $\nu$).

\begin{proposition}\label{prop:ineq_1}
Suppose that $\mu$, $\nu$ satisfy \eqref{theta}.
Then,
\begin{align*}
    \|\KM^\mu_{\delta\gets} - \KM^\nu_{\delta\gets}\|_{L^2(\gamma)}
    &\leq W_2(\mu,\nu)\int_\delta^\infty \frac{\exp\bigl(\textstyle \int_\delta^t \theta_\omega \dd \omega\bigr)}{\sqrt{c(t)}}\dd t\,,
\end{align*}
where $c(t)$ is defined in \eqref{eq:ct}.
\end{proposition}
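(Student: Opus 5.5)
The plan is to reuse the Grönwall argument from the proof of Theorem~\ref{thm:stab_ent}, namely estimate~\eqref{eq:stab_FIs}, but to bound the Fisher information pointwise in time by~\eqref{eq:FI<W2} rather than through the log-Sobolev inequality.

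\textbf{Step 1: Grönwall.} Since both $\mu$ and $\nu$ satisfy~\eqref{theta}, the differential inequality~\eqref{eq:stab_start} holds on $[\delta,T]$. Applying Grönwall exactly as before and letting $T\to\infty$, with $\|\KM^\mu_{T\gets}-\KM^\nu_{T\gets}\|_{L^2(\gamma)}\to 0$ by the ODE representation~\eqref{eq:ode_representation}, gives
\begin{align*}
\|\KM^\mu_{\delta\gets} - \KM^\nu_{\delta\gets}\|_{L^2(\gamma)} \le \int_\delta^\infty \exp\Bigl(\int_\delta^t \theta_\omega \dd\omega\Bigr)\, \FI(q_t^\nu\mmid q_t^\mu)^{1/2}\dd t\,.
\end{align*}
This form arises because the prefactor $L_\delta(\mu)\exp(-\int_t^\infty\theta)$ in~\eqref{eq:stab_FIs} equals $\exp(\int_\delta^t\theta)$.

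\textbf{Step 2: Fisher information bound.} For each $t>\delta$, apply~\eqref{eq:FI<W2} with some $t_0\in(0,t)$. This gives $\FI(q_t^\nu\mmid q_t^\mu)^{1/2}\le W_2(\mu,\nu)/\sqrt{c(t)}$. Substituting into Step 1 yields the claimed inequality. The integrand is finite for $t>t_0$ since $c(t)>0$ there.

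\textbf{Main obstacle: the choice of $t_0$.} The statement leaves $t_0$ implicit in $c(t)$. I would take $t_0\le\delta$ fixed, for instance $t_0=\delta$, so that $c(t)>0$ for every $t>\delta$. Note that near $t=\delta$ one has $c(t)\approx 2(e^{2\delta}-1)(t-\delta)$, so the integrand blows up like $(t-\delta)^{-1/2}$. This singularity is integrable, so the bound is finite and nothing further is needed for this proposition. Any convergence issues at $t\to\infty$ or in the limit exchange are handled as in Theorem~\ref{thm:stab_ent}.
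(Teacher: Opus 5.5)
Your proof is correct and follows the paper's argument: start from \eqref{eq:stab_FIs}, use $L_\delta(\mu)\exp(-\int_t^\infty\theta_r\dd r)=\exp(\int_\delta^t\theta_r\dd r)$, and then apply \eqref{eq:FI<W2} pointwise in $t$. One caution on your ``main obstacle'': because \eqref{eq:FI<W2} is applied separately for each $t$, you may let $t_0=t_0(t)\in(0,t)$ depend on $t$, and the paper relies on this by taking $t_0=t/2$ in the proof of Theorem~\ref{thm:wass_stab}; your fixed choice $t_0=\delta$ instead puts a factor $(e^{2\delta}-1)^{-1/2}$ in front of the whole integral, which makes it of order $\delta^{-1/2}$ rather than $\log(1/\delta)$ and would lose the near-Lipschitz conclusion downstream.
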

\begin{proof}
By~\eqref{eq:stab_FIs} in the proof of Theorem~\ref{thm:stab_ent},
\begin{align*}
     \|\mathsf T_{\delta\gets}^{\mu} - \mathsf T_{\delta\gets}^{\nu}\|_{L^2(\gamma)} \leq L_\delta(\mu) \int_\delta^\infty \exp\Bigl(-\int_t^\infty \theta_{r}\dd r\Bigr) \FI(q^\nu_{t}\mmid q^\mu_{t})^{1/2}\dd t \,.
\end{align*}
We conclude by employing the second point from Lemma \ref{lem:W2-reg-OU}.
\end{proof}

For the remaining interval, we obtain the following explicit bound.

\begin{proposition}\label{prop:ineq_2}
For any $\mu,\nu$ satisfying~\eqref{theta}, assuming that $\mom(\mu),\mom(\nu)\leq M$,
\begin{align*}
    - \int_\eps^\delta \frac{\dd}{\dd t}\, \|\KM^\mu_{t\gets} - \KM^\nu_{t\gets}\|_{L^2(\gamma)} \dd t
    &\le 2M\,(1-e^{-\delta}) + 2\sqrt{d\,(1-e^{-2\delta})}\,.
\end{align*}
\end{proposition}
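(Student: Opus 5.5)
The integral of the derivative telescopes, so the quantity to bound equals
\[
\|\KM^\mu_{\eps\gets}-\KM^\nu_{\eps\gets}\|_{L^2(\gamma)}-\|\KM^\mu_{\delta\gets}-\KM^\nu_{\delta\gets}\|_{L^2(\gamma)}.
\]
I will not try to control the derivative through \eqref{eq:stab_start}. Instead, I will bound the difference directly by the triangle inequality:
\[
\le \|\KM^\mu_{\eps\gets}-\KM^\mu_{\delta\gets}\|_{L^2(\gamma)}+\|\KM^\nu_{\eps\gets}-\KM^\nu_{\delta\gets}\|_{L^2(\gamma)}.
\]
It then suffices to show that, for any $\rho$ satisfying~\eqref{theta} with $\mom(\rho)\le M$,
\[
\|\KM^\rho_{\eps\gets}-\KM^\rho_{\delta\gets}\|_{L^2(\gamma)}\le M(1-e^{-\delta})+\sqrt{d(1-e^{-2\delta})}.
\]

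To get this, I use the ODE representation~\eqref{eq:ode_representation}. Subtracting the representations at $\eps$ and $\delta$ gives
\[
\KM^\rho_{\eps\gets}-\KM^\rho_{\delta\gets}=\int_\eps^\delta \nabla\log\frac{q^\rho_t}{\gamma}\circ\KM^\rho_{t\gets}\dd t.
\]
By Minkowski's integral inequality and $(\KM^\rho_{t\gets})_\sharp\gamma=q_t^\rho$, this is bounded by
\[
\int_\eps^\delta \Bigl\|\nabla\log\frac{q_t^\rho}{\gamma}\Bigr\|_{L^2(q_t^\rho)}\dd t .
\]
By Tweedie's identity~\eqref{eq:tweedie_one}, and writing $X_t=e^{-t}X_0+\sigma_tZ$ with $\sigma_t=\sqrt{1-e^{-2t}}$, we have $e^{-t}X_t-X_0=-(1-e^{-2t})X_0+e^{-t}\sigma_tZ$. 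Jensen's inequality for the conditional expectation then gives
\[
\Bigl\|\nabla\log\frac{q_t^\rho}{\gamma}\Bigr\|_{L^2(q_t^\rho)}\le \frac{e^{-t}}{\sigma_t^2}\bigl(\sigma_t^2\,\|X_0\|_{L^2}+e^{-t}\sigma_t\sqrt d\bigr)= e^{-t}M+\frac{e^{-2t}\sqrt d}{\sqrt{1-e^{-2t}}} .
\]

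Both terms integrate explicitly over $[0,\delta]$:
\[
\int_0^\delta e^{-t}M\dd t=M(1-e^{-\delta}),
\qquad
\int_0^\delta \frac{e^{-2t}}{\sqrt{1-e^{-2t}}}\dd t=\sqrt{1-e^{-2\delta}}\quad(u=1-e^{-2t}).
\]
The second bound holds because $\frac{\dd}{\dd t}\sqrt{1-e^{-2t}}=e^{-2t}/\sqrt{1-e^{-2t}}$. Summing over $\rho=\mu$ and $\rho=\nu$ gives $2M(1-e^{-\delta})+2\sqrt{d(1-e^{-2\delta})}$, which is the claimed bound.

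The main obstacle is justifying the integral representation and the interchange of norm and integral, i.e.\ that $\KM^\rho_{t\gets}$ is given by~\eqref{eq:ode_representation} with an integrable velocity near $t=\eps>0$. This is provided by Lemma~\ref{lem:km_regular_case} under~\eqref{theta}. The velocity's $L^2$ norm is integrable near $0$ thanks to the $t^{-1/2}$ singularity, which is exactly why the Tweedie-based bound, rather than a Fisher-information bound, is used on this short interval.
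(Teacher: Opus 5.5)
Your proposal is correct and is essentially the paper's proof. The paper bounds $-\frac{\dd}{\dd t}\|\KM^\mu_{t\gets}-\KM^\nu_{t\gets}\|_{L^2(\gamma)}$ pointwise in $t$ by Cauchy--Schwarz, getting $\sqrt{\FI(q_t^\mu\mmid\gamma)}+\sqrt{\FI(q_t^\nu\mmid\gamma)}$, and then applies Lemma~\ref{lem:FI_abs_bound}, which is your Tweedie--Jensen estimate; you reach the same integrand through the integrated triangle and Minkowski inequalities, which is just the integrated form of that differential bound and additionally avoids dividing by a norm that might vanish (the telescoping only needs $t\mapsto\|\KM^\mu_{t\gets}-\KM^\nu_{t\gets}\|_{L^2(\gamma)}$ to be absolutely continuous on $[\eps,\delta]$, which follows from the same integral representation).
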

\begin{proof}
From \eqref{eq:ode_representation}, we have by Cauchy--Schwarz
\begin{align*}
    -\frac{\dd}{\dd t}\, \|\KM^\mu_{t\gets} - \KM^\nu_{t\gets}\|_{L^2(\gamma)}^2
    &=2\int\Bigl\langle \KM^\mu_{t\gets} - \KM^\nu_{t\gets}\,,\, \nabla \log \frac{q_t^\mu}{\gamma}\circ \KM^\mu_{t\gets} - \nabla \log \frac{q_t^\nu}{\gamma}\circ \KM^\nu_{t\gets} \Bigr\rangle\dd \gamma \\
    &\leq 2\, \|\KM^\mu_{t\gets} - \KM^\nu_{t\gets}\|_{L^2(\gamma)}\,\bigl(\sqrt{\FI(q_t^\mu\mmid \gamma)} + \sqrt{\FI(q_t^\nu\mmid \gamma)}\bigr)
\end{align*}
and thus
\begin{align*}
    -\frac{\dd}{\dd t}\, \|\KM^\mu_{t\gets} - \KM^\nu_{t\gets}\|_{L^2(\gamma)}
    &\le \sqrt{\FI(q_t^\mu\mmid \gamma)} + \sqrt{\FI(q_t^\nu\mmid \gamma)}\,.
\end{align*}
By Lemma \ref{lem:FI_abs_bound}, 
\begin{align*}
    \sqrt{\FI(q_{t}^\mu\mmid\gamma)}\vee \sqrt{\FI(q_{t}^\nu\mmid\gamma)} \leq Me^{-t} + \frac{\sqrt d e^{-2t}}{\sqrt{1-e^{-2t}}}\,.
\end{align*}
We conclude by integration, as
\begin{align*}
\int_{0}^\delta  e^{-2t}\, (1-e^{-2t})^{-1/2}\dd t
&=  \sqrt{1-e^{-2\delta}}\,. \qedhere
\end{align*}
\qedhere
\end{proof}

With these two inequalities, we can prove our main result.

\begin{proof}[Proof of Theorem~\ref{thm:wass_stab}]
Combining both Proposition~\ref{prop:ineq_1} and Proposition~\ref{prop:ineq_2}, we have that along a regular approximating sequence $\{\nu_n\}_{n\in\N}$, for any $0 < \eps \le \delta$,
\begin{align*}
    \|\KM^\mu_{\eps\gets} - \KM^{\nu_n}_{\eps\gets}\|_{L^2(\gamma)}
    &\le W_2(\mu,\nu_n)\int_\delta^\infty \frac{\exp\bigl(\textstyle \int_\delta^t \theta_\omega \dd \omega\bigr)}{\sqrt{c(t)}}\dd t + 2M_n\,(1-e^{-\delta}) + 2\sqrt{d\,(1-e^{-2\delta})}\,,
\end{align*}
where $M_n \deq \max\{\mom(\mu), \mom(\nu_n)\}$.
Passing $n\to\infty$ and then $\eps\searrow 0$ via Lemma~\ref{lem:existence_km},
\begin{align*}
    \|\KM^\mu - \KM^{\nu}\|_{L^2(\gamma)}
    &\le W_2(\mu,\nu)\int_\delta^\infty \frac{\exp\bigl(\textstyle \int_\delta^t \theta_\omega \dd \omega\bigr)}{\sqrt{c(t)}}\dd t + 2M\,(1-e^{-\delta}) + 2\sqrt{d\,(1-e^{-2\delta})}\,,
\end{align*}
with $M \deq \max\{\mom(\mu), \mom(\nu)\}$.
We now turn to bound $c(t)$. By \eqref{eq:int_cond} and the choice $t_0 = t/2$, we see that
\begin{align*}
    \int_{t/2}^t \exp\Bigl(\int_s^t 2(1-2\theta_r) \dd r \Bigr)\dd s &\geq \exp(-4\mathsf C_\mu) \int_{t/2}^t e^{2(t-s)}\dd s \\
    &= \exp(-4\mathsf C_\mu)\,  \frac{e^{2t}}{2}\,(e^{-t} - e^{-2t}) \\
    &= \frac{1}{2\exp(4\mathsf C_\mu)}\, (e^t - 1)\,,
\end{align*}
and so we have the bound $c(t) \geq \exp(-4\mathsf C_\mu)\,(e^{t}-1)^{2}$. 
Rearranging and invoking \eqref{eq:int_cond} again, our bound now reads
\begin{align*}
    \|\mathsf T^{\mu} - \mathsf T^{\nu}\|_{L^2(\gamma)} \leq  W_2(\mu,\nu) \exp(3\mathsf C_\mu) \int_\delta^\infty (e^t-1)^{-1} \dd t + 2M\,(1-e^{-\delta}) + 2\sqrt{d\,(1-e^{-2\delta})}\,.
\end{align*}
We also readily compute
\begin{align*}
    \int_\delta^\infty (e^t - 1)^{-1} \dd t &= - \log(1-e^{-\delta})\,.
\end{align*}
Writing $\zeta \deq 1-e^{-\delta}$, our bound reads
\begin{align*}
    \|\mathsf T^{\mu} - \mathsf T^{\nu}\|_{L^2(\gamma)} \leq  W_2(\mu,\nu) \exp(3\mathsf C_\mu) \log(1/\zeta) + 2M\zeta + \sqrt{8d\zeta}\,.
\end{align*}
A simple choice is to make the last two terms at most $W_2(\mu,\nu)\exp(3\mathsf C_\mu)$, by taking the choice $\zeta \deq 1 \wedge \frac{W_2(\mu,\nu) \exp(3\mathsf C_\mu)}{2M} \wedge \frac{W_2^2(\mu,\nu) \exp(6\mathsf C_\mu)}{8d}$.
This yields
\begin{align*}
    &W_2(\mu,\nu) \exp(3\mathsf C_\mu)\,(2 + \log(1/\zeta)) \\
    &\qquad = W_2(\mu,\nu) \exp(3\mathsf C_\mu) \log\Bigl[e^2\max\Bigl\{1,\, \frac{2M}{W_2(\mu,\nu)\exp(3\mathsf C_\mu)},\, \frac{8d}{W_2^2(\mu,\nu)\exp(6\mathsf C_\mu)}\Bigr\}\Bigr] \\[0.25em]
    &\qquad \le W_2(\mu,\nu) \exp(3\mathsf C_\mu) \log\Bigl[e^2\max\Bigl\{1,\, \frac{4M^2 + 8d}{W_2^2(\mu,\nu)\exp(6\mathsf C_\mu)}\Bigr\}\Bigr]\,,
\end{align*}
where we used $A \le \frac{1}{2}\,(1+A^2) \le \max\{1,A^2\}$ to simplify the bound.
\end{proof}

\begin{remark}
    In the statement of the informal theorem, we can use $M \le \mom(\mu) + W_2(\mu,\nu)$ to show that the term in the logarithm can indeed be written as $1+c'(\mu)/W_2(\mu,\nu)$ for some $c'(\mu)$ that only depends on $\mu$.
\end{remark}
\subsection{Some examples}\label{sec:ex}

We now go over some cases where \eqref{theta} has been previously established. In all instances, we will write $\mu \propto \exp(-V)$ for some function $V :\R^d \to \R$, and state the assumption on $\mu$ in terms of properties of $V$.

The most general hypothesis for validity of \eqref{theta} has been established in \cite{chaintron_propagation_2025}, more precisely in the proof of Theorem 1.4 therein. It is commonly referred to as asymptotic convexity, a hypothesis well-known in the coupling by reflection literature, ensuring among other things exponential contraction in the $1$-Wasserstein distance of the associated overdamped Langevin dynamics, see, e.g., \cite{eberle2016reflection}. In order to state rigorously what we mean by asymptotic convexity, we define the convexity profile of $V$ as follows
\begin{align*}
    \kappa_V(r) \defeq \inf\Bigl\{ \frac{\langle \nabla {V}(x) - \nabla {V}(y), x-y \rangle}{\|x-y\|^2} \, : \, \|x-y\| = r\Bigr\} \quad r>0\, .
\end{align*}
The potential $V$ is then said to be $\alpha$-asymptotically convex, $\alpha>0$, if
\begin{align*}
    \liminf_{r \rightarrow + \infty} \kappa_V(r) \geq \alpha \quad \text{and} \quad \int_0^1 \kappa_V^-(r)\, r \dd r < + \infty \,,
\end{align*}
where $\kappa_V^-(r) := \max\{0,-\kappa_V(r)\}$ denotes the negative part of $\kappa_V(r)$.

Obviously, this class encompasses $\alpha$-strongly convex potentials $V$, as in this case $\kappa_V(r) \geq \alpha$ for all $r > 0$.

Under the assumptions of asymptotic convexity, by properly rescaling \cite[Proposition 2.9]{chaintron_propagation_2025} one has the following expression
\[
\theta_t = \frac{1-\alpha}{\alpha(e^{2t}-1)+1} -\frac{e^{2t}}{(e^{2t}-1)(\alpha(e^{2t}-1)+1)} +\frac{e^{2t}}{8 (e^{2t}-1)^2}\, \mathbb{E}[X_t^2]\,,
\]
where $X_t$ is a random variable with density
$$\xi_t ({\rm d}x) \propto \mathds{1}_{x \geq 0} \exp \Bigl( -\frac{1}{8} \int_0^{x} u\kappa_{V}^- (u) \dd u \Bigr) \exp \Bigl( -\frac{x^2}{16 (1-e^{-2t})} \Bigr)\,. $$

Let us give two more explicit examples of asymptotically convex potentials, where the constants are tractable.

\paragraph{Strongly convex with Lipschitz perturbation.}
Suppose $V = W + H$ where $W$ is $\alpha$-strongly convex and $H$ is a smooth $L$-Lipschitz perturbation. In this case
$
\kappa_V(r) \geq \alpha - 2L/r
$
for $r\geq 0$, so $V$ is asymptotically convex.
\cite{BriPed25HeatFlow}, see also \cite{chaintron_propagation_2025} for a slight improvement of some absolute constants, showed that \eqref{theta} holds with 
\begin{align}\label{eq:slc_pert_theta}
    \theta_u = \frac{1-\alpha}{\alpha(e^{2u}-1)+1} + \frac{e^{2u}L^2}{(\alpha(e^{2u}-1)+1)^2}+ \frac{2Le^{2u}}{(\alpha(e^{2u}-1)+1)^{3/2}\sqrt{e^{2u}-1}}\,.
\end{align}
In the case where $\alpha \in (0,1]$, all these terms are positive and so we have the na\"{\i}ve bound
\begin{align*}
    \int_s^t \theta_u \dd u \leq \int_0^\infty \theta_u \dd u = \frac{1}{2}\log(\alpha^{-1}) + \frac{L^2}{2\alpha} + \frac{2L}{\sqrt{\alpha}}\,.
\end{align*}
If $\alpha > 1$, we can always drop the first term in \eqref{eq:slc_pert_theta}, and obtain
\begin{align*}
    \int_s^t \theta_u \dd u \leq \frac{L^2}{2\alpha} + \frac{2L}{\sqrt{\alpha}}\,,
\end{align*}
which is the required bound for \eqref{eq:int_cond}. 
\paragraph{Strongly convex with Lipschitz perturbation with bounded Hessian.}
If now $V = W + H$ where $W$ is strongly convex and $H$ is a smooth Lipschitz function with bounded Hessian, then one can find suitable $\alpha, M>0$ such that
$$\kappa_V(r) \geq \alpha - \frac{1}{r}\,f_M(r)\, ,$$
where $f_M(r)  \deq  2\sqrt{M} \tanh(r\sqrt{M}/2 )$.
Studying propagation of convexity profiles of this type has been pioneered by \cite{conforti2024weak}, and further used in several works such as \cite{conforti2025projected}. We cite the formula for $\theta_t$ from \citet[Lemma B.3]{silveri2025beyond}
\begin{align*}
    \theta_t &= 1 - \frac{\alpha}{\alpha+(1-\alpha)e^{-2t}} + \frac{e^{-2t}}{(\alpha+(1-\alpha)e^{-2t})^2}\,M \\
    &= \frac{1-\alpha}{\alpha(e^{2t}-1)+1} + \frac{e^{2t}M}{(\alpha(e^{2t}-1)+1)^2} \,.
\end{align*}
As before, it is now immediate that \eqref{theta} and \eqref{eq:int_cond} are satisfied.

\subsection{Fisher information lower bound}\label{sec:lower_bound}
The core of our proof hinges on \eqref{eq:FI<W2}, which upper bounds the relative Fisher information along OU channels by the $2$-Wasserstein distance. In this section, we prove that this Fisher information bound cannot be improved by more than a constant factor in general. By rescaling, it is sufficient to consider the example for the heat semigroup, see also the remark below.

Henceforth, for a probability measure $P \in \cP(\R)$, we write $P_t \defeq P * \cN(0,t)$. We will show the following.
\begin{proposition}\label{prop:fi_lb}
Fix $t > 0$ and let $\mu = \cN(0,1)$; it holds that
\begin{align*}
    \sup_{\nu \neq \mu} \frac{\FI(\nu_t\mmid \mu_t)}{W_2^2(\mu,\nu)} \geq \frac{1}{t^2}\,.
\end{align*}
\end{proposition}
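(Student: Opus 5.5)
The plan is to test the ratio on explicit one-dimensional families $\nu$ for which both $\FI(\nu_t\mmid\mu_t)$ and $W_2^2(\mu,\nu)$ are computable in closed form. Here $\mu=\cN(0,1)$, so $\mu_t=\cN(0,1+t)$ with score $-x/(1+t)$. I would first record the Gaussian computations, because they are exact and show which mechanism can give order $1/t^2$. Then I would build a non-Gaussian family that makes $\nu_t$ locally very different from $\mu_t$ while keeping the transport cost of order $t$ per unit mass. I have not found a family that verifiably reaches the exact constant $1$; that is the step I flag as the main obstacle below.

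\emph{Step 1 (Gaussian families).} For $a,b>0$ one has $\FI(\cN(m,a)\mmid\cN(0,b))=\frac{(b-a)^2}{ab^2}+\frac{m^2}{b^2}$.
\begin{itemize}[leftmargin=*]
\item The shift $\nu=\cN(m,1)$ gives ratio $1/(1+t)^2$.
\item The dilation $\nu=\cN(0,\sigma^2)$ has $W_2^2=(1-\sigma)^2$ and gives ratio $\frac{(1+\sigma)^2}{(\sigma^2+t)(1+t)^2}$. As $\sigma\to0$, i.e.\ $\nu=\delta_0$, this tends to $\frac{1}{t(1+t)^2}$.
\end{itemize}
Both are strictly below $1/t^2$ for every $t>0$, so Gaussians alone do not prove the proposition. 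They do identify the mechanism. The term $(b-a)^2/(ab^2)$ is large when $\nu_t$ carries Gaussian bumps of width $\sqrt t$, whose within-bump score $-(x-x_i)/t$ contributes about $1/t$ to the Fisher information. One therefore wants many such bumps, each obtained by moving mass only a distance of order $\sqrt t$.

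\emph{Step 2 (quantization / comb family).} Let $\nu^h$ be the pushforward of $\mu$ by the nearest-point projection onto the lattice $h\mathbb Z$.
\begin{itemize}[leftmargin=*]
\item Transport cost: $W_2^2(\mu,\nu^h)=\frac{h^2}{12}(1+o(1))$ as $h\to0$.
\item Score of $\nu^h_t$: it equals $-\frac{x-\E[X_i\mid x]}{t}$, where $X_i$ is the lattice point posterior to $x$. Hence
\begin{align*}
\E_{\nu^h_t}\bigl|\nabla\log\nu^h_t\bigr|^2=\frac1t-\frac{\E\Var(X_i\mid x)}{t^2}.
\end{align*}
\item Cross term: the $\mu_t$-score is $O(|x|)$, so it only adds a bounded correction.
\item Scaling: setting $h^2=Kt$ and letting $t\to0$ reduces everything to a scale-free problem on a periodic comb. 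The ratio becomes $\frac{12}{K\,t^2}\bigl(1-g(K)\bigr)$, where $g(K)=\E\Var(X_i\mid x)/t$ is an explicit Jacobi-theta expression that decays like $e^{-K/8}$.
\end{itemize}
Maximizing over $K$ then gives $\sup_\nu \FI/W_2^2\ge C_\star/t^2$ for small $t$. For large $t$ I would use the shift family, whose ratio $1/(1+t)^2$ is comparable to $1/t^2$.

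\emph{Main obstacle.} The difficulty is the exact constant: showing $C_\star\ge1$ uniformly in $t$, rather than only some positive constant. The crude estimate $g(K)\approx 2/\sqrt{K}$ from the logistic transition width already suggests that $\max_K\frac{12}{K}(1-g(K))$ may fall below $1$. If it does, I would try three improvements in order.
\begin{itemize}[leftmargin=*]
\item Replace the nearest-point quantization by a partial quantization: move only the mass within distance $\eta h$ of each lattice point, so that $W_2^2$ shrinks faster than the Fisher gain.
\item Optimize the cell profile as a one-dimensional variational problem on a single period, numerically if necessary.
\item Interpolate between the comb and the shift families to cover intermediate $t$.
\end{itemize}
If none of these reaches $1$, the honest conclusion is that the argument proves $\gtrsim 1/t^2$. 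That weaker bound is all that the paper's claim ``cannot be improved by more than a constant factor'' actually requires.
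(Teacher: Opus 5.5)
Your proposal does not prove the proposition. The statement asserts the explicit constant $1/t^2$ for each fixed $t>0$, and none of your families reaches it.

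\textbf{Gaussian families.} Over all Gaussians $\nu=\cN(m,\sigma^2)$, the ratio is a mediant of your shift and dilation ratios. It is maximized at $m=0$, $\sigma=t$, where it equals $1/(t(1+t))<1/t^2$. So your large-$t$ fallback also falls short.

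\textbf{The comb.} The obstacle you flag is real, not merely unverified. The quantity $1-g(K)$ is the Fisher information, in units of $1/t$, of a periodic comb of unit-variance Gaussians with spacing $\sqrt K$. Hence it is at most $1$, and a ratio $\ge 1$ would need $K\le 12$. By Poisson summation, that comb has density $K^{-1/2}\bigl(1+2\sum_{n\ge1}e^{-2\pi^2n^2/K}\cos(2\pi n x/\sqrt K)\bigr)$, which for $K\le 12$ oscillates only mildly; already $1-g(12)\approx 0.25$. A direct numerical evaluation gives $\max_K \frac{12}{K}(1-g(K))\approx 0.37$.

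\textbf{Why this happens.} Suppose the transported mass is small compared with the local mass of $\mu_t$ at its destination. Then $\nu_t/\mu_t-1$ is small there, so $\FI$ is quadratic in the moved mass while $W_2^2$ is linear. In the bulk of $\mu$ you are therefore forced into order-one rearrangements, and these lose a constant. Your suggested refinements stay in this regime, with no evidence that they recover the constant. Settling for $\gtrsim 1/t^2$ would prove a weaker statement than the one posed.

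\textbf{The missing idea.} Go to the Gaussian tail. There an exponentially small mass can dominate $\mu_t$ locally, which makes both $\FI$ and $W_2^2$ linear in the moved mass. The paper takes $\nu=\mu-\mu|_{A_R}+p_R\delta_R$ with $A_R=[R/(1+t),\,R/(1+t)+R^{-1}]$ and $p_R=\mu(A_R)=\exp\bigl(-R^2/(2(1+t)^2)+O(\log R)\bigr)$.
\begin{itemize}
\item Write $\beta$ for the density of $\cN(R,t)$. Then $\log\bigl(\mu_t(R)/(p_R\beta(R))\bigr)=-tR^2/(2(1+t)^2)+O(\log R)\to-\infty$, and this holds uniformly on $B_R=[R-R^{1/3},R+R^{1/3}]$.
\item The smoothed hole left at $A_R$ is exponentially smaller still. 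So on $B_R$ the density $\nu_t$ is essentially $p_R\beta$, with score $-(x-R)/t=O(R^{1/3})$.
\item Meanwhile $\mu_t=\cN(0,1+t)$ has score $-x/(1+t)=-R/(1+t)+O(R^{1/3})$ on $B_R$.
\item Since $\nu_t(B_R)\ge p_R(1-o(1))$, this gives $\FI(\nu_t\mmid\mu_t)\ge p_R R^2(1+t)^{-2}(1-o(1))$.
\item Transporting $A_R$ to $R$ costs $W_2^2(\mu,\nu)\le p_R\,t^2R^2(1+t)^{-2}(1+o(1))$.
\end{itemize}
The ratio therefore tends to $1/t^2$ as $R\to\infty$. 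The source location $R/(1+t)$ is chosen so that the moved mass is exponentially larger than $\mu_t$ at the destination, while the displacement is only $tR/(1+t)$.
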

To prove this result, we will construct for each $t > 0$, a $\nu \in \cP(\R)$ that verifies the lower bound. To this end, let $\mu = \cN(0,1)$ and fix $t > 0$.
Let $\mu_t = \mu * \cN(0,t) = \cN(0,1+t)$, $\nu_t = \nu * \cN(0,t)$. Letting $R > 0$ (which will ultimately be taken to $+\infty$), define the intervals
\begin{align*}
    A_R \defeq [R/(1+t), R/(1+t) + R^{-1}]\,, \quad B_R \defeq [R - R^{1/3}, R + R^{1/3}]\,.
\end{align*}
Our adversarial choice of $\nu$ is given by
\begin{align*}
    \nu \defeq \mu - \mu\big|_{A_R} + p_R \delta_R\,,
\end{align*}
where $p_R \defeq \mu(A_R)$. Note that $\nu \geq p_R \delta_R$ always holds.

First, we bound the $2$-Wasserstein distance from above. Choosing the coupling to only move mass from inside the interval $A_R$ to the point-mass $\delta_R$, we obtain
\begin{align}\label{eq:counterex_0}
    W_2^2(\mu,\nu) \leq \int_{A_R}|x-R|^2 \,\mu(\!\dd x) = \frac{p_Rt^2 R^2}{(1+t)^2}\,(1+o(1))\,,
\end{align}
where the asymptotics are for $R\to\infty$, with $t$ fixed. If we then manage to show that
\begin{align}\label{eq:fi_w2_proofstep}
    \FI(\nu_t\mmid \mu_t) \geq \frac{p_RR^2}{(1+t)^2}\,(1-o(1))\,,
\end{align}
the proof concludes, as
\begin{align*}
    \frac{\FI(\nu_t\mmid \mu_t)}{W_2^2(\mu,\nu)} \geq \frac{(p_R R^2\,(1+t)^{-2})\,(1-o(1))}{(p_R R^2\,(1+t)^{-2}\,t^2)\,(1+o(1))} =\frac{1-o(1)}{t^2\,(1+o(1))}\,, 
\end{align*}
and hence
\begin{align*}
    \limsup_{R\to\infty} \frac{\FI(\nu_t\mmid\mu_t)}{W_2^2(\mu,\nu)} \ge \frac{1}{t^2}\,.
\end{align*}
To obtain \eqref{eq:fi_w2_proofstep}, it is sufficient to study the integral over $B_R$
\begin{align*}
    \FI(\nu_t\mmid \mu_t)
    &= \int \bigl\lvert \bigl(\log \frac{\nu_t}{\mu_t}\bigr)'\bigr\rvert^2\dd \nu_t
    \ge \int_{B_R} \bigl\lvert \bigl(\log \frac{\nu_t}{\mu_t}\bigr)'\bigr\rvert^2\dd \nu_t
\end{align*}
where we conclude thanks to the following two bounds
\begin{align}\label{eq:counterex_1}
    \bigl\lvert \log \bigl(\frac{\nu_t}{\mu_t}\bigr)'\bigr(x)\rvert^2 
    \ge \frac{R^2}{(1+t)^2}\,(1-o(1))
    \quad \text{for } x \in B_R,
\end{align}
 as well as
\begin{align}\label{eq:counterex_2}
    \nu_t(B_R) \ge p_R\,(1-o(1))\,.
\end{align}
Note that the proof of \eqref{eq:counterex_2} is straightforward, as by definition of $\nu_t$, we have that
\begin{align*}
    \nu_t(B_R) &\geq p_R\, (\delta_R * N(0,t))(B_R) \\
    &= p_R\, \PP(|Y_t| \leq R^{1/3}) \\
    &\geq p_R\,(1 - 2\exp(-R^{2/3}/2t))\\
    &= p_R\,(1 - o(1))\,,
\end{align*}
where $Y_t \sim \cN(0,t)$, and the second inequality is a standard Gaussian tail bound. The proof of \eqref{eq:counterex_1} exploits linearity of the heat semigroup and tedious estimates; see Appendix~\ref{app:fi_lb_proof} for the remaining details of the proof.

\begin{remark}
Proposition~\ref{prop:fi_lb} was written for the heat semigroup. By rescaling this estimate, it is straightforward to obtain a corresponding bound for the OU process:
\begin{align*}
    \sup_{\nu\ne \mu} \frac{\FI(q_t^\nu\mmid q_t^\mu)}{W_2^2(\mu,\nu)}
    \ge \frac{{e^{2t}}}{(e^{2t}-1)^2}\,.
\end{align*}    
\end{remark}
On the other hand, the upper bound that we use in the proof of Theorem~\ref{thm:wass_stab} reads
\begin{align*}
    \sup_{\nu\ne \mu} \frac{\FI(q_t^\nu\mmid q_t^\mu)}{W_2^2(\mu,\nu)}
    &\le \frac{\exp(4\mathsf C_\mu)}{(e^t-1)^2}\,.
\end{align*}
Hence, our bound has the correct behavior in $t$, both for $t\searrow 0$ and for $t\to\infty$.

\bibliography{ref}

@article{cazelles2026statistical,
  title={Statistical Estimation of {M}onge Transport Maps via {B}renier Potentials},
  author={Cazelles, Elsa and Pauwels, Edouard and Portales, L{\'e}o},
  journal={arXiv preprint 2604.22366},
  year={2026}
}

@incollection {Fol85,
    AUTHOR = {F\"{o}llmer, Hans},
     TITLE = {An entropy approach to the time reversal of diffusion
              processes},
 BOOKTITLE = {Stochastic differential systems ({M}arseille-{L}uminy, 1984)},
    SERIES = {Lect. Notes Control Inf. Sci.},
    VOLUME = {69},
     PAGES = {156--163},
 PUBLISHER = {Springer, Berlin},
      YEAR = {1985},
}

@article{khudiakova2025optimal,
  title={{$L^\infty$}-optimal transport of anisotropic log-concave measures and exponential convergence in {F}isher’s infinitesimal model},
  author={Khudiakova, Ksenia A. and Maas, Jan and Pedrotti, Francesco},
  journal={The Annals of Applied Probability},
  volume={35},
  number={3},
  pages={1913--1940},
  year={2025},
  publisher={Institute of Mathematical Statistics}
}

@article{gozlan2025global,
  title={Global regularity estimates for optimal transport via entropic regularisation},
  author={Gozlan, Nathael and Sylvestre, Maxime},
  journal={arXiv preprint 2501.11382},
  year={2025}
}

@article{caffarelli2000monotonicity,
  title={Monotonicity properties of optimal transportation and the {FKG} and related inequalities},
  author={Caffarelli, Luis A},
  journal={Communications in Mathematical Physics},
  volume={214},
  number={3},
  pages={547--563},
  year={2000},
  publisher={Springer}
}

@article{merigot2026sharp,
    author = {M\'{e}rigot, Quentin},
    title = {Sharp stability of {B}renier maps via quantitative regularity of potentials},
    journal = {HAL-
05616391},
    year = {2026}
}

@book{peyre2019computational,
  title={Computational optimal transport: With applications to data science},
  author={Peyr{\'e}, Gabriel and Cuturi, Marco},
  year={2019},
  publisher={Now Foundations and Trends}
}

@article{chewi2025stability,
  title={Stability of the {K}im--{M}ilman flow map},
  author={Chewi, Sinho and Pooladian, Aram-Alexandre and Zhang, Matthew S.},
  journal={arXiv preprint 2511.01154},
  year={2025}
}

@article {brenier1991polar,
    AUTHOR = {Brenier, Yann},
     TITLE = {Polar factorization and monotone rearrangement of
              vector-valued functions},
   JOURNAL = {Comm. Pure Appl. Math.},
  FJOURNAL = {Communications on Pure and Applied Mathematics},
    VOLUME = {44},
      YEAR = {1991},
    NUMBER = {4},
     PAGES = {375--417},
}

@article{delalande2023quantitative,
  title={Quantitative stability of optimal transport maps under variations of the target measure},
  author={Delalande, Alex and M\'erigot, Quentin},
  journal={Duke Mathematical Journal},
  volume={172},
  number={17},
  pages={3321--3357},
  year={2023},
  publisher={Duke University Press}
}

@article{conforti2025coupling,
  title={A coupling approach to {L}ipschitz transport maps},
  author={Conforti, Giovanni and Eichinger, Katharina},
  journal={arXiv preprint 2502.01353},
  year={2025}
}

@article{serres2026contractive,
  title={Contractive transport maps from {$\mathbb S^2$} to nearly spherical surfaces with positive {R}icci curvature},
  author={Serres, Jordan},
  journal={Nonlinear Analysis},
  volume={267},
  pages={114058},
  year={2026},
  publisher={Elsevier}
}

@article{lopez2025bakry,
  title={A {B}akry--{E}mery approach to {L}ipschitz transportation on manifolds},
  author={L{\'o}pez-Rivera, Pablo},
  journal={Potential Analysis},
  volume={62},
  number={2},
  pages={331--353},
  year={2025},
  publisher={Springer}
}

@article{neeman2022lipschitz,
  title={Lipschitz changes of variables via heat flow},
  author={Neeman, Joe},
  journal={arXiv preprint 2201.03403},
  year={2022}
}

@inproceedings{klartag2023spectral,
  title={Spectral monotonicity under {G}aussian convolution},
  author={Klartag, Bo’az and Putterman, Eli},
  booktitle={Annales de la Facult{\'e} des sciences de Toulouse: Math{\'e}matiques},
  volume={32},
  number={5},
  pages={939--967},
  year={2023}
}

@article{fathi2024transportation,
  title={Transportation onto log-{L}ipschitz perturbations},
  author={Fathi, Max and Mikulincer, Dan and Shenfeld, Yair},
  journal={Calculus of Variations and Partial Differential Equations},
  volume={63},
  number={3},
  pages={61},
  year={2024},
  publisher={Springer}
}

@article{otto2000generalization,
  title={Generalization of an inequality by {T}alagrand and links with the logarithmic {S}obolev inequality},
  author={Otto, Felix and Villani, C{\'e}dric},
  journal={Journal of Functional Analysis},
  volume={173},
  number={2},
  pages={361--400},
  year={2000},
  publisher={Elsevier}
}

@article {eberle2016reflection,
    AUTHOR = {Eberle, Andreas},
     TITLE = {Reflection couplings and contraction rates for diffusions},
   JOURNAL = {Probab. Theory Related Fields},
  FJOURNAL = {Probability Theory and Related Fields},
    VOLUME = {166},
      YEAR = {2016},
    NUMBER = {3-4},
     PAGES = {851--886},
}

@incollection {MikShe23HeatFlow,
    AUTHOR = {Mikulincer, Dan and Shenfeld, Yair},
     TITLE = {On the {L}ipschitz properties of transportation along heat flows},
 BOOKTITLE = {Geometric aspects of functional analysis},
    SERIES = {Lecture Notes in Math.},
    VOLUME = {2327},
     PAGES = {269--290},
 PUBLISHER = {Springer, Cham},
      YEAR = {2023},
}

@article{kitagawa2025stability,
  title={Stability of optimal transport maps on {R}iemannian manifolds},
  author={Kitagawa, Jun and Letrouit, Cyril and M{\'e}rigot, Quentin},
  journal={arXiv preprint 2504.05412},
  year={2025}
}

@article{carlier2024displacement,
  title={Displacement smoothness of entropic optimal transport},
  author={Carlier, Guillaume and Chizat, L{\'e}na{\"\i}c and Laborde, Maxime},
  journal={ESAIM: Control, Optimisation and Calculus of Variations},
  volume={30},
  pages={25},
  year={2024},
  publisher={EDP Sciences}
}

@article{letrouit2024gluing,
  title={Gluing methods for quantitative stability of optimal transport maps},
  author={Letrouit, Cyril and M{\'e}rigot, Quentin},
  journal={arXiv preprint 2411.04908},
  year={2024}
}

@article{balakrishnan2025stability,
  title={Stability bounds for smooth optimal transport maps and their statistical implications},
  author={Balakrishnan, Sivaraman and Manole, Tudor},
  journal={arXiv preprint 2502.12326},
  year={2025}
}

@InProceedings{wibisono2025mixing,
  title = 	 {Mixing time of the proximal sampler in relative {F}isher information via strong data processing inequality (extended abstract)},
  author =       {Wibisono, Andre},
  booktitle = 	 {Proceedings of Thirty Eighth Conference on Learning Theory},
  pages = 	 {5716--5717},
  year = 	 {2025},
  editor = 	 {Haghtalab, Nika and Moitra, Ankur},
  volume = 	 {291},
  series = 	 {Proceedings of Machine Learning Research},
  month = 	 {7},
  publisher =    {PMLR},
}

@article{divol2025tight,
  title={Tight stability bounds for entropic {B}renier maps},
  author={Divol, Vincent and Niles-Weed, Jonathan and Pooladian, Aram-Alexandre},
  journal={International Mathematics Research Notices},
  volume={2025},
  number={7},
  pages={rnaf078},
  year={2025},
  publisher={Oxford University Press}
}

@article {BriPed25HeatFlow,
    AUTHOR = {Brigati, Giovanni and Pedrotti, Francesco},
     TITLE = {Heat flow, log-concavity, and {L}ipschitz transport maps},
   JOURNAL = {Electron. Commun. Probab.},
  FJOURNAL = {Electronic Communications in Probability},
    VOLUME = {30},
      YEAR = {2025},
     PAGES = {1--12},
publisher = {Institute of Mathematical Statistics and Bernoulli Society},
}

@article{kim2012generalization,
  title={A generalization of {C}affarelli’s contraction theorem via (reverse) heat flow},
  author={Kim, Young-Heon and Milman, Emanuel},
  journal={Mathematische Annalen},
  volume={354},
  number={3},
  pages={827--862},
  year={2012},
  publisher={Springer}
}

@article{conforti2025projected,
  title={Projected {L}angevin dynamics and a gradient flow for entropic optimal transport},
  author={Conforti, Giovanni and Lacker, Daniel and Pal, Soumik},
  journal={Journal of the European Mathematical Society},
  year={2025}
}

@article{rigollet2025sample,
  title={On the sample complexity of entropic optimal transport},
  author={Rigollet, Philippe and Stromme, Austin J.},
  journal={The Annals of Statistics},
  volume={53},
  number={1},
  pages={61--90},
  year={2025},
  publisher={Institute of Mathematical Statistics}
}

@article{letrouit2025quantitative,
  title={Quantitative stability of optimal transport},
  author={Letrouit, Cyril},
  journal={Notes du cours Peccot},
  volume={2025},
  year={2025}
}

@inproceedings{pooladian2023minimax,
  title={Minimax estimation of discontinuous optimal transport maps: the semi-discrete case},
  author={Pooladian, Aram-Alexandre and Divol, Vincent and Niles-Weed, Jonathan},
  booktitle={International Conference on Machine Learning},
  pages={28128--28150},
  year={2023},
  organization={PMLR}
}

@article{ge2025generalization,
  title={A generalization of {C}affarelli's contraction theorem to nearly spherical manifolds},
  author={Ge, Yuxin and Serres, Jordan},
  journal={arXiv preprint 2512.01496},
  year={2025}
}

@article{pooladian2021entropic,
  title={Entropic estimation of optimal transport maps},
  author={Pooladian, Aram-Alexandre and Niles-Weed, Jonathan},
  journal={arXiv preprint 2109.12004},
  year={2021}
}

@article{conforti2024weak,
  title={Weak semiconvexity estimates for {S}chr{\"o}dinger potentials and logarithmic {S}obolev inequality for {S}chr{\"o}dinger bridges},
  author={Conforti, Giovanni},
  journal={Probability Theory and Related Fields},
  volume={189},
  number={3},
  pages={1045--1071},
  year={2024},
  publisher={Springer}
}

@article{chewi2023entropic,
  title={An entropic generalization of {C}affarelli’s contraction theorem via covariance inequalities},
  author={Chewi, Sinho and Pooladian, Aram-Alexandre},
  journal={Comptes Rendus. Math{\'e}matique},
  volume={361},
  number={G9},
  pages={1471--1482},
  year={2023}
}

@article{manole2024plugin,
  title={Plugin estimation of smooth optimal transport maps},
  author={Manole, Tudor and Balakrishnan, Sivaraman and Niles-Weed, Jonathan and Wasserman, Larry},
  journal={The Annals of Statistics},
  volume={52},
  number={3},
  pages={966--998},
  year={2024},
  publisher={Institute of Mathematical Statistics}
}

@article {Leo14Schrodinger,
    AUTHOR = {L\'{e}onard, Christian},
     TITLE = {A survey of the {S}chr\"{o}dinger problem and some of its
              connections with optimal transport},
   JOURNAL = {Discrete Contin. Dyn. Syst.},
  FJOURNAL = {Discrete and Continuous Dynamical Systems. Series A},
    VOLUME = {34},
      YEAR = {2014},
    NUMBER = {4},
     PAGES = {1533--1574},
}

@article{gigli2011holder,
  title={On {H}{\"o}lder continuity-in-time of the optimal transport map towards measures along a curve},
  author={Gigli, Nicola},
  journal={Proceedings of the Edinburgh Mathematical Society},
  volume={54},
  number={2},
  pages={401--409},
  year={2011},
  publisher={Cambridge University Press}
}

@article{altschuler_shifted_2025,
	title = {Shifted composition {I}: {H}arnack and reverse transport inequalities},
	volume = {71},
	shorttitle = {Shifted {Composition} {I}},
	number = {1},
	urldate = {2026-05-11},
	journal = {IEEE Transactions on Information Theory},
	author = {Altschuler, Jason M. and Chewi, Sinho},
	month = jan,
	year = {2025},
	pages = {90--113},
}

@article{chaintron_propagation_2025,
	title = {Propagation of weak log-concavity along generalised heat flows via {H}amilton--{J}acobi equations},
	journal = {arXiv preprint 2508.07931},
	author = {Chaintron, Louis-Pierre and Conforti, Giovanni and Eichinger, Katharina},
	month = aug,
	year = {2025},
}

@article {wang1997logarithmic,
    AUTHOR = {Wang, Feng-Yu},
     TITLE = {Logarithmic {S}obolev inequalities on noncompact {R}iemannian manifolds},
   JOURNAL = {Probab. Theory Related Fields},
  FJOURNAL = {Probability Theory and Related Fields},
    VOLUME = {109},
      YEAR = {1997},
    NUMBER = {3},
     PAGES = {417--424},
}

@article {FatGozPro20Caff,
    AUTHOR = {Fathi, Max and Gozlan, Nathael and Prod'homme, Maxime},
     TITLE = {A proof of the {C}affarelli contraction theorem via entropic regularization},
   JOURNAL = {Calc. Var. Partial Differential Equations},
  FJOURNAL = {Calculus of Variations and Partial Differential Equations},
    VOLUME = {59},
      YEAR = {2020},
    NUMBER = {3},
     PAGES = {Paper No. 96, 18},
}

@article {BobGenLed01Hyper,
    AUTHOR = {Bobkov, Sergey G. and Gentil, Ivan and Ledoux, Michel},
     TITLE = {Hypercontractivity of {H}amilton--{J}acobi equations},
   JOURNAL = {J. Math. Pures Appl. (9)},
  FJOURNAL = {Journal de Math\'ematiques Pures et Appliqu\'ees. Neuvi\`eme
              S\'erie},
    VOLUME = {80},
      YEAR = {2001},
    NUMBER = {7},
     PAGES = {669--696},
}

@article {PolWu16WassCont,
    AUTHOR = {Polyanskiy, Yury and Wu, Yihong},
     TITLE = {Wasserstein continuity of entropy and outer bounds for interference channels},
   JOURNAL = {IEEE Trans. Inform. Theory},
  FJOURNAL = {Institute of Electrical and Electronics Engineers.
              Transactions on Information Theory},
    VOLUME = {62},
      YEAR = {2016},
    NUMBER = {7},
     PAGES = {3992--4002},
}

@InProceedings{MerDelCha20Stability,
  title = 	 {Quantitative stability of optimal transport maps and linearization of the 2-Wasserstein space},
  author =       {M\'erigot, Quentin and Delalande, Alex and Chazal, Frederic},
  booktitle = 	 {Proceedings of the Twenty Third International Conference on Artificial Intelligence and Statistics},
  pages = 	 {3186--3196},
  year = 	 {2020},
  editor = 	 {Chiappa, Silvia and Calandra, Roberto},
  volume = 	 {108},
  series = 	 {Proceedings of Machine Learning Research},
  month = 	 {aug},
  publisher =    {PMLR},
}

@article{letrouit_unstable_2026,
	title = {Unstable optimal transport maps},
	volume = {364},
	language = {en},
	number = {G2},
	journal = {Comptes Rendus. Mathématique},
	author = {Letrouit, Cyril},
	month = may,
	year = {2026},
	pages = {333--344},
}

@inproceedings{silveri2025beyond,
  title={Beyond log-concavity and score regularity: Improved convergence bounds for score-based generative models in W2-distance},
  author={Silveri, Marta Gentiloni and Ocello, Antonio},
  booktitle={Forty-second International Conference on Machine Learning},
  year={2025}
}

\newpage
\appendix

\section{Existence of Kim--Milman flow maps}\label{app:existence}

\begin{definition}\label{def:reg_approx}
Given $\nu \in \cP_2(\R^d)$, we say that $\{\nu_n\}_{n\in\N}$ is a regular approximating sequence for $\nu$ if each $\nu_n$ satisfies~\eqref{theta} and $W_2(\nu_n,\nu) \to 0$.
\end{definition}

For example, one can take
\begin{align*}
    \nu_n = \frac{\nu|_{B(0,n)}}{\nu(B(0,n))} * \cN(0,n^{-1} I)\,.
\end{align*}

{We need a few preliminary lemmas.}

\begin{lemma}\label{lem:technical_estimates}
    Let $\nu$ have finite second moment: $\mom(\nu) \le M$. For all $t\ge \delta > 0$ and all $y$ with $\norm y \le R$,
        \begin{align*}
            \bigl\lVert \nabla \log \frac{q_t^\nu}{\gamma}(y)\bigr\rVert \le C(\delta,R,M)\,e^{-t} \qquad\text{and}\qquad
            \bigl\lVert \nabla^2 \log \frac{q_t^\nu}{\gamma}(y)\bigr\rVert_{\rm op} \le C'(\delta,R,M)\,e^{-2t}\,.
        \end{align*}
\end{lemma}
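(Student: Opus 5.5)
The plan is to use the Tweedie identities \eqref{eq:tweedie_one} and \eqref{eq:tweedie_two}, which reduce both quantities to the conditional mean and covariance of $X_0$ given $X_t = y$. The task is then to show that these posterior moments are bounded uniformly for $\|y\|\le R$ and $t\ge\delta$.

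Write $\sigma_t^2 \deq 1-e^{-2t}$, so $\sigma_t^2 \ge 1-e^{-2\delta}$ for $t \ge \delta$. By \eqref{eq:tweedie_one},
\begin{align*}
\bigl\|\nabla\log \tfrac{q_t^\nu}{\gamma}(y)\bigr\| \le \frac{e^{-t}}{\sigma_t^2}\,\bigl(e^{-t}R + \|\E[X_0\mid X_t=y]\|\bigr)\,.
\end{align*}
By \eqref{eq:tweedie_two},
\begin{align*}
\bigl\|\nabla^2\log \tfrac{q_t^\nu}{\gamma}(y)\bigr\|_{\rm op} \le \frac{e^{-2t}}{\sigma_t^2} + \frac{e^{-2t}}{\sigma_t^4}\,\E[\|X_0\|^2\mid X_t=y]\,.
\end{align*}
The factors $e^{-t}$ and $e^{-2t}$ already appear explicitly, and $\sigma_t^{-2}\le (1-e^{-2\delta})^{-1}$. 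It therefore suffices to bound $\E[\|X_0\|^2 \mid X_t = y]$ by a constant $K(\delta,R,M)$ for $t\ge\delta$ and $\|y\|\le R$. By Jensen's inequality, the same constant then controls the conditional mean.

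The posterior has density proportional to $\exp\bigl(-\|y-e^{-t}x\|^2/(2\sigma_t^2)\bigr)\,\nu(\dd x)$. For the numerator, I bound the Gaussian factor by $1$, which gives at most $\int\|x\|^2\dd\nu \le M^2$.

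The main obstacle is the lower bound on the normalizing constant $Z \deq \int \exp\bigl(-\|y-e^{-t}x\|^2/(2\sigma_t^2)\bigr)\,\nu(\dd x)$, uniformly in $t \ge \delta$. I would restrict to the ball $\{\|x\|\le 2M\}$, which has $\nu$-mass at least $3/4$ by Markov's inequality. On this ball, $\|y-e^{-t}x\|\le R+2M$, so
\begin{align*}
Z \ge \tfrac34\exp\Bigl(-\frac{(R+2M)^2}{2(1-e^{-2\delta})}\Bigr)\,.
\end{align*}
Hence $\E[\|X_0\|^2\mid X_t=y]\le \tfrac43 M^2\exp\bigl((R+2M)^2/(2(1-e^{-2\delta}))\bigr) =: K$.

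Substituting $K$ into the two displays yields the constants $C(\delta,R,M)$ and $C'(\delta,R,M)$. Should $M = 0$, then $\nu = \delta_0$ and the bounds are trivial.
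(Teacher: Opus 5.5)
Your proof is correct and follows the paper's argument: Tweedie's identities, bounding the posterior numerator by the moment of $\nu$, and lower-bounding the normalizing constant on the ball $B(0,2M)$, which has $\nu$-mass at least $3/4$ by Markov's inequality. The only differences are cosmetic. You bound $\E[\|X_0\|^2\mid X_t=y]$ once, which covers both the gradient (via Jensen, with constant $\sqrt{K}$) and the Hessian, and you make $c_*$ explicit; the paper instead bounds the first conditional moment and leaves the Hessian case as ``similar''.
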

\begin{proof}
    For the first statement, by~\eqref{eq:tweedie_one},
    \begin{align*}
        \nabla \log \frac{q_t^\nu}{\gamma}(y)
        &= -\frac{e^{-t}}{1-e^{-2t}}\, \E[e^{-t} X_t - X_0 \mid X_t = y]\,.
    \end{align*}
    So,
    \begin{align*}
        \Bigl\lVert \nabla \log \frac{q_t^\nu}{\gamma}(y)\Bigr\rVert
        &\le \frac{e^{-t}}{1-e^{-2t}}\,\bigl(e^{-t}\,\|y\| + \E[\|X_0\| \mid X_t=y]\bigr)\,.
    \end{align*}
    Also,
    \begin{align*}
        \E[\|X_0\| \mid X_t=y]
        &\le \int \|x\|\exp\Bigl(-\frac{\|y-e^{-t}x\|^2}{2(1-e^{-{2}t})}\Bigr)\,\nu(\!\dd x) \mathop{\Big/} \int \exp\Bigl(-\frac{\|y-e^{-t} x\|^2}{2(1-e^{-{2}t})}\Bigr)\,\nu(\!\dd x)\,.
    \end{align*}
    The numerator is bounded by $M$, whereas the denominator is lower bounded by
    \begin{align*}
        \int \exp\Bigl(-\frac{\|y-e^{-t} x\|^2}{2(1-e^{-{2}t})}\Bigr)\,\nu(\!\dd x)
        &\ge c_* \int_{B(0,2M)} \nu(\!\dd x)
        \ge \frac{3c_*}{4}
    \end{align*}
    by Markov's inequality, where
    \begin{align*}
        c_* \deq \inf\Bigl\{\exp\Bigl(-\frac{\|y-e^{-t} x\|^2}{2(1-e^{-{2}t})}\Bigr) \Bigm\vert t \ge \delta,\, \|x\| \le 2M,\, \|y\| \le R \Bigr\} > 0\,.
    \end{align*}

    The second statement is similar, using~\eqref{eq:tweedie_two} instead.
\end{proof}

\begin{lemma}\label{lem:km_regular_case}
    Let $\nu$ satisfy~\eqref{theta}.
    Then, the convergence $\KM^{\nu,T}_{T-t}(x) \to \KM^\nu_{t\gets}(x)$ holds uniformly over $(t,x)$ belonging to compact subsets of $(0,\infty)\times \R^d$.
    Moreover, for all $t > 0$,
    \begin{align}\label{eq:deriv_km_map}
        \partial_t \KM^\nu_{t\gets} = -\nabla \log \frac{q_t^\nu}{\gamma}\circ \KM^\nu_{t\gets}\,,
    \end{align}
    and $(\KM^\nu_{t\gets})_\sharp \gamma = q_t^\nu$.
\end{lemma}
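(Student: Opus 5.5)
Fix $T>0$ and write $b_t \deq \nabla \log (q_t^\nu/\gamma)$ for the velocity field. By the construction in Section~\ref{sec:construction}, $s\mapsto \KM^{\nu,T}_{T-s}(x)$ solves $\partial_s Y_s = -b_s(Y_s)$ for $s\in[0,T]$ with $Y_T = x$. In other words, $\KM^{\nu,T}_{T-t}$ is the flow of the ODE $\dot y = -b_s(y)$ that starts at time $T$ from the identity and runs backward to time $t$. The plan is to compare these finite-horizon maps for two horizons $T<T'$. Flowing $x$ from $T'$ down to $T$ gives the point $\KM^{\nu,T'}_{T'-T}(x)$. Flowing that point from $T$ down to $t$ then gives $\KM^{\nu,T'}_{T'-t}(x)$. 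So $\KM^{\nu,T'}_{T'-t} = \Phi_{T\to t}\circ \KM^{\nu,T'}_{T'-T}$, where $\Phi_{T\to t} \deq \KM^{\nu,T}_{T-t}$ denotes the flow map from time $T$ to time $t$.

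\emph{Step 1: the map is close to the identity on long horizons.} By Lemma~\ref{lem:technical_estimates}, on $\{\|y\|\le R\}$ and for times $s\ge\delta$, we have $\|b_s(y)\|\le C(\delta,R,M)e^{-s}$. Starting from $\|x\|\le R/2$, a continuity (bootstrap) argument keeps the trajectory inside $B(0,R)$ as long as $C e^{-T}\le R/2$. It then gives $\|\KM^{\nu,T'}_{T'-T}(x)-x\|\le C e^{-T}$ uniformly in $T'>T$.

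\emph{Step 2: the flow from $T$ to $t$ is Lipschitz.} For $t\ge\delta$, the spatial Lipschitz constant of $\Phi_{T\to t}$ is controlled by $\exp(\int_t^T \theta_s\dd s)\le L_t(\nu)$, exactly as in Proposition~\ref{prop:lip_km}. This uses the one-sided bound $\nabla b_s\preceq \theta_s I$ from \eqref{theta} in the derivative of $\|y_s-y'_s\|^2$; one can also just use the local Hessian bound $C'e^{-2s}$ of Lemma~\ref{lem:technical_estimates} on balls. Combining with Step~1,
\begin{align*}
\|\KM^{\nu,T'}_{T'-t}(x)-\KM^{\nu,T}_{T-t}(x)\| \le L_t(\nu)\,C e^{-T}\,.
\end{align*}
This bound is uniform over $t\in[\delta,\infty)$ with $t\le T$ and over $\|x\|\le R/2$. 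Hence the family is uniformly Cauchy on compacts of $(0,\infty)\times\R^d$, and the limit $\KM^\nu_{t\gets}$ exists uniformly there.

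\emph{Step 3: the ODE and the pushforward.} The finite-horizon maps satisfy the integral identity
\begin{align*}
\KM^{\nu,T}_{T-t}(x) = \KM^{\nu,T}_{T-t'}(x) + \int_t^{t'} b_s\circ \KM^{\nu,T}_{T-s}(x)\dd s\,.
\end{align*}
Pass to the limit $T\to\infty$ in this identity, using the uniform convergence on compacts and the local uniform continuity of $b_s$ on $[t,t']\times B(0,R)$. This yields the integral form of \eqref{eq:deriv_km_map}, and differentiating in $t$ gives \eqref{eq:deriv_km_map} itself.

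For the pushforward, note that $(\KM^{\nu,T}_{T-t})_\sharp q_T^\nu = q_t^\nu$. Write $(\KM^{\nu,T}_{T-t})_\sharp\gamma$ and compare the two pushforwards in $W_2$. Since the map is $L_t(\nu)$-Lipschitz, this $W_2$ distance is at most $L_t(\nu)\,W_2(q_T^\nu,\gamma)\le L_t(\nu)e^{-T}W_2(\nu,\gamma)\to0$. On the other hand, $(\KM^{\nu,T}_{T-t})_\sharp\gamma\to(\KM^\nu_{t\gets})_\sharp\gamma$ weakly, by pointwise convergence and dominated convergence for bounded continuous test functions. Together these give $(\KM^\nu_{t\gets})_\sharp\gamma=q_t^\nu$.

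The main obstacle I expect is the bootstrap in Step~1, namely ensuring that trajectories stay in a fixed ball so that the local constants of Lemma~\ref{lem:technical_estimates} apply. I would handle it by defining the exit time from $B(0,R)$ and showing that the accumulated displacement $\int_T^\infty Ce^{-s}\dd s = Ce^{-T}$ is smaller than $R/2$, so the trajectory never exits.
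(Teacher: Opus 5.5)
Your proof is correct, but you organize the Cauchy step differently from the paper.

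\textbf{Cauchy step.} The paper writes $F^T(t,x) = x + \int_t^T \nabla\log\frac{q_s^\nu}{\gamma}\circ F^T(s,x)\dd s$, subtracts the equations for $T<T'$, and closes with Gr\"onwall. That route needs two things: a two-sided Lipschitz constant $\ell_s$ for the drift, and a separate a priori Gr\"onwall bound keeping $F^{T'}(s,x)$ in a fixed ball for all $s\ge t$. You instead factor $\KM^{\nu,T'}_{T'-t} = \Phi_{T\to t}\circ\KM^{\nu,T'}_{T'-T}$. Confinement is then only needed on the window $[T,T']$, where the drift is $O(e^{-s})$ and your bootstrap suffices. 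The expansion of $\Phi_{T\to t}$ is controlled by the one-sided bound in \eqref{theta} alone, which is arguably cleaner.

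\textbf{Pushforward.} The paper uses the almost sure coupling $X_T\to Z$ together with locally uniform convergence, and needs no Lipschitz bound at all. You use the Lipschitz bound and the OU contraction $W_2(q_T^\nu,\gamma)\le e^{-T}W_2(\nu,\gamma)$. Both work.

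\textbf{What the paper's route gives directly.} By passing to the limit in the integral from $t$ to $T$, the paper obtains the full representation \eqref{eq:ode_representation}, which the main text relies on. Your finite-window identity gives \eqref{eq:deriv_km_map} as stated. The full representation follows once you add the $T'\to\infty$ limit of your Step~1, namely $\|\KM^\nu_{T\gets}(x)-x\|\le Ce^{-T}$, and let $t'\to\infty$ in Step~3.

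\textbf{A small correction.} In Step~2 and in the pushforward step you use $\exp(\int_t^T\theta_s\dd s)\le L_t(\nu)$. This is false in general. If $\theta_s<0$ on $[T,\infty)$ (e.g.\ $\nu$ strongly log-concave with $\alpha>1$), the finite-horizon flow has a strictly larger Lipschitz constant than $L_t(\nu)$. All you need is a bound uniform in $T$, and that holds. Indeed, \eqref{eq:tweedie_two} forces $\theta_s\ge -e^{-2s}/(1-e^{-2s})$, so $\exp(\int_t^T\theta_s\dd s)\le L_t(\nu)\,(1-e^{-2T})^{-1/2}$. Replace $L_t(\nu)$ by this constant in both places and the argument goes through unchanged.
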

\begin{proof}
    Consider the mapping $F^T : (t,x) \mapsto \KM^{\nu,T}_{T-t}(x)$.
    By definition,
    \begin{align*}
        \partial_t F^T(t,x)
        &= -\partial_s\big|_{s=T-t} \KM^{\nu,T}_s(x)
        = -\nabla \log \frac{q^\nu_{T-s}}{\gamma}(\KM^{\nu,T}_s(x))\Big|_{s=T-t}
        = -\nabla \log \frac{q^\nu_t}{\gamma}(F^T(t,x))\,.
    \end{align*}
    Hence, since $F^T(T,x) = x$,
    \begin{align}\label{eq:F_int_eq}
        F^T(t,x) = x + \int_t^T \nabla \log \frac{q_s^\nu}{\gamma}\circ F^T(s,x)\dd s\,.
    \end{align}
    Thus, for $T' > T$,
    \begin{align*}
        F^{T'}(t,x) - F^T(t,x)
        &= \int_t^T \bigl[\nabla \log \frac{q_s^\nu}{\gamma}(F^{T'}(s,x))-\nabla \log \frac{q_s^\nu}{\gamma}(F^{T}(s,x))\bigr]\dd s \\
        &\qquad{} + \int_T^{T'} \nabla \log\frac{q_s^\nu}{\gamma}(F^{T'}(s,x))\dd s\,.
    \end{align*}
    Under~\eqref{theta}, we know by Lemma~\ref{lem:technical_estimates} that $\nabla \log (q_s^\nu/\gamma)$ is $\ell_s$-Lipschitz, where $\int_t^\infty \ell_s\dd s < \infty$ for all $t > 0$.
    Moreover, again by Lemma~\ref{lem:technical_estimates}, with $M \deq \mom(\nu)$ and for all $\|x\|\le R$,
    \begin{align*}
        \partial_t \|F^{T'}(t,x)\|
        &\le \bigl\lVert \nabla \log \frac{q_t^\nu}{\gamma}(F^{T'}(t,x))\bigr\rVert
        \le \ell_t\,\|F^{T'}(t,x)\| + \bigl\lVert \nabla \log \frac{q_t^\nu}{\gamma}(0)\bigr\rVert \\
        &\le \ell_t\,\|F^{T'}(t,x)\| + C(t,M,1)\,.
    \end{align*}
    By Gr\"onwall's inequality, $\|F^{T'}(s,x)\| \le R' \deq R'(t,M,R)$ for all $s\ge t$.
    Hence,
    \begin{align*}
        \|F^{T'}(t,x) - F^T(t,x)\|
        &\le \int_t^T \ell_s\,\|F^{T'}(s,x) - F^T(s,x)\|\dd s + \int_T^{T'} C(t,M,R')\,e^{-s}\dd s\,.
    \end{align*}
    A standard application of Gr\"onwall's inequality now shows that $\{F^T\}_{T> 0}$ is Cauchy with respect to uniform convergence over compact subsets of $(0,\infty) \times \R^d$.
    Hence, the limiting map $\KM^\nu_{t\gets}$ is well defined for all $t > 0$.

    Passing to the limit in~\eqref{eq:F_int_eq}, we deduce that
    \begin{align*}
        \KM^\nu_{t\gets}(x) = x + \int_t^\infty \nabla \log \frac{q_s^\nu}{\gamma} \circ \KM^\nu_{s\gets}(x)\dd s\,.
    \end{align*}
    By the fundamental theorem of calculus, we conclude that~\eqref{eq:deriv_km_map} holds.

    For the last statement, let $X_0 \sim \nu$ and $Z \sim \gamma$ be independent, and for $t \ge 0$ set $X_t \deq e^{-t}\,X_0 + \sqrt{1-e^{-2t}}\,Z$.
    Then, $\KM^{\nu,T}_{T-t}(X_T) \sim q_t^\nu$ for all $T > 0$.
    By the uniform convergence on compacts, and since $X_T \to Z$ almost surely, we have $\KM^\nu_{t \gets}(Z) \sim q_t^\nu$.
\end{proof}

\begin{proof}[Proof of Lemma~\ref{lem:existence_km}]
    \textbf{We first show that $\{\KM^{\nu_n}_{\delta\gets}\}_{n\in\N}$ is Cauchy in $L^2(\gamma)$ for each fixed $\delta > 0$.}
    By Lemma~\ref{lem:km_regular_case}, we have
    \begin{align*}
        \|\KM^{\nu_m}_{\delta\gets} - \mathsf T^{\nu_{n}}_{\delta\gets}\|
        &\le \int_\delta^\infty\Bigl\lVert \nabla \log \frac{q_t^{\nu_m}}{\gamma} \circ \KM^{\nu_m}_{\gets t} - \nabla \log \frac{q_t^{\nu_n}}{\gamma} \circ \KM^{\nu_n}_{\gets t}\Bigr\rVert\dd t\,.
    \end{align*}
    Since $W_2(\nu_n,\nu) \to 0$ and $\nu \in \cP_2(\R^d)$, we have that $\sup_{n\in\N} \mom(\nu_n) \le M < \infty$.

    Let $E_R$ be the set on which $\|\mathsf T_{t\gets}^{\nu_m}\| \vee \|\mathsf T_{t\gets}^{\nu_n}\| \le R$ for all $t \ge \delta$.
    Then, on $E_R$, by Lemma~\ref{lem:technical_estimates},
    \begin{align*}
        &\Bigl\lVert \nabla \log \frac{q_t^{\nu_m}}{\gamma} \circ \mathsf T^{\nu_m}_{t\gets} - \nabla \log \frac{q_t^{\nu_n}}{\gamma} \circ \mathsf T^{\nu_n}_{t\gets}\Bigr\rVert \\
        &\qquad \le C'(\delta,R,M)\,e^{-2t}\,\|\mathsf T_{t\gets}^{\nu_m} - \mathsf T_{t\gets}^{\nu_n}\| + \sup_{B(0,R)}{\Bigl\lVert\nabla \log \frac{q_t^{\nu_m}}{\gamma} - \nabla \log \frac{q_t^{\nu_n}}{\gamma} \Bigr\rVert}\,.
    \end{align*}
    By Gr\"{o}nwall's inequality, it follows that
    \begin{align*}
        \|\mathsf T^{\nu_{m}}_{\delta\gets} - \mathsf T^{\nu_{n}}_{\delta\gets}\|
        &\lesssim_{\delta,R} \int_\delta^\infty\sup_{B(0,R)}{\Bigl\lVert\nabla \log \frac{q_t^{\nu_m}}{\gamma} - \nabla \log \frac{q_t^{\nu_n}}{\gamma} \Bigr\rVert}\dd t
        = o_{\delta,R}(1)
    \end{align*}
    where $o_{\delta,R}(1)$ denotes a term that vanishes as $m,n\to\infty$ for fixed $\delta$ and $R$.
    This can again be seen from the identity~\eqref{eq:tweedie_one}; we have
    \begin{align*}
        &\nabla \log \frac{q_t^{\nu_m}}{\gamma}(y) - \nabla \log \frac{q_t^{\nu_n}}{\gamma}(y) \\
        &\qquad = \frac{e^{-t}}{1-e^{-2t}}\,\biggl[\frac{\int x \exp(-\frac{\|y-e^{-t} x\|^2}{2(1-e^{-2t})})\,\nu_m(\!\dd x)}{\int \exp(-\frac{\|y-e^{-t} x\|^2}{2(1-e^{-2t})})\,\nu_m(\!\dd x)} - \frac{\int x \exp(-\frac{\|y-e^{-t} x\|^2}{2(1-e^{-2t})})\,\nu_n(\!\dd x)}{\int \exp(-\frac{\|y-e^{-t} x\|^2}{2(1-e^{-2t})})\,\nu_n(\!\dd x)}\biggr]\,.
    \end{align*}
    For $\delta \le t \le T$ and $\norm y \le R$, all of the integrands are uniformly bounded and Lipschitz, so for each $t$ we have $\sup_{B(0,R)}{\bigl\lVert\nabla \log \frac{q_t^{\nu_m}}{\gamma} - \nabla \log \frac{q_t^{\nu_n}}{\gamma} \bigr\rVert} \to 0$.
    By Lemma~\ref{lem:technical_estimates}, we can apply the dominated convergence theorem to deduce that the integral over $[\delta,T]$ converges to zero as $m,n\to\infty$.
    On the other hand, Lemma~\ref{lem:technical_estimates} shows that the integral over $[T,\infty)$ vanishes as $T\to\infty$, uniformly over $m,n\in\N$.

    Next, by~\eqref{eq:deriv_km_map}, we can estimate:
    \begin{align*}
        \sup_{n\in\N} \gamma(E_R^\comp)
        &\le 2\sup_{n\in\N} \frac{\|\sup_{\delta\le t <\infty}{\|\KM_{t\gets}^{\nu_n}}\|\,\|_{L^2(\gamma)}^2}{R^2}\\[0.25em]
        &\lesssim \frac{1}{R^2} \sup_{n\in\N}{\Bigl\{d + \Bigl(\int_\delta^\infty \sqrt{\FI(q_t^{\nu_n}\mmid \gamma)}\dd t \Bigr)^2 \Bigr\}}
        = O\Bigl(\frac{1}{R^2}\Bigr)\,.
    \end{align*}
    By letting $m,n\to\infty$ first and then $R \to \infty$, we deduce that
    \begin{align*}
        \|\mathsf T^{\nu_{m}}_{\delta\gets} - \mathsf T^{\nu_{n}}_{\delta \gets}\|\to 0 \qquad\text{in probability}\,.
    \end{align*}
    On the other hand, $W_2(\nu_n,\nu) \to 0$ implies that $\{\mathsf \|\KM^{\nu_n}_{\delta\gets}\|^2\}_{n\in\N}$ is uniformly integrable, since $(\KM^{\nu_n}_{\delta\gets})_\sharp \gamma = q_\delta^{\nu_n}$ and $W_2(q_\delta^{\nu_n}, q_\delta^\nu) \to 0$. Thus,
    \begin{align*}
        \|\mathsf T^{\nu_m}_{\delta\gets} - \mathsf T^{\nu_{n}}_{\delta\gets}\|_{L^2(\gamma)}\to 0\,.
    \end{align*}
    Hence, the $L^2(\gamma)$ limit $\KM^\nu_{\delta\gets} \deq \lim_{n\to\infty} \KM^{\nu_n}_{\delta\gets}$ exists.
    Moreover, by inspecting the above proof, one can readily see that the limit is independent of the regular approximating sequence, and for any bounded continuous test function $\phi$,
    \begin{align*}
        \int \phi\circ \KM^\nu_{\delta\gets}\dd \gamma = \lim_{n\to\infty} \int \phi\circ\KM^{\nu_n}_{\delta\gets}\dd\gamma
        = \lim_{n\to\infty} \int \phi\dd q_\delta^{\nu_n} = \int \phi\dd q_\delta^\nu\,,
    \end{align*}
    so that $(\KM^\nu_{\delta\gets})_\sharp \gamma = q_\delta^\nu$.

    \textbf{Next, we show that $\{\KM^\nu_{\delta\gets}\}_{\delta > 0}$ is Cauchy in $L^2(\gamma)$ as $\delta \searrow 0$.}
    By~\eqref{eq:deriv_km_map}, for $0 < s \le t$,
    \begin{align*}
        \|\mathsf T^{\nu_n}_{s\gets} - \mathsf T^{\nu_n}_{t\gets}\|_{L^2(\gamma)}
        &\le \int_s^t \Bigl\lVert\nabla \log \frac{q_u^{\nu_n}}{\gamma} \circ \KM_{u\gets}^{\nu_n}\Bigr\rVert_{L^2(\gamma)}\dd u
        = \int_s^t \sqrt{\FI(q_u^{\nu_n} \mmid \gamma)}\dd u\,.
    \end{align*}
    By Lemma~\ref{lem:FI_abs_bound} and by passing $n\to\infty$, it follows that
    \begin{align*}
        \|\mathsf T^{\nu}_{s\gets} - \mathsf T^{\nu}_{t\gets}\|_{L^2(\gamma)}
        \le O(\sqrt t)\,.
    \end{align*}
    This proves the claim and shows that we can define the $L^2(\gamma)$ limit $\KM^\nu \deq \lim_{\delta\searrow 0} \KM^\nu_{\delta\gets}$.
    By the same argument as before, $(\KM^\nu)_\sharp \gamma = \nu$ as desired.
\end{proof}

\section{Remaining proofs}
\subsection{A technical lemma}
\begin{lemma}\label{lem:FI_abs_bound}
    For any probability measure $\mu$ with $\mom(\mu) \defeq (\int \|x\|^2\,\mu(\!\dd x))^{1/2} < \infty$ and any $t > 0$,
    \begin{align*}
        \FI(q_t^\mu\mmid \gamma)
        &\le e^{-2t}\,\mom^2(\mu) + \frac{e^{-4t}}{1-e^{-2t}}\,d\,.
    \end{align*}
    In particular, if $\mom(\mu) \le \sqrt d$,
    \begin{align*}
        \FI(q_t^\mu\mmid \gamma)
        &\le \frac{de^{-2t}}{1-e^{-2t}}\,.
    \end{align*}
\end{lemma}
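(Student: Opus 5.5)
The plan is to use the Tweedie identity \eqref{eq:tweedie_one} together with the conditional bias--variance decomposition, with no appeal to \eqref{theta}. Let $X_0\sim\mu$ and $Z\sim\gamma$ be independent, and set $X_t = e^{-t}X_0 + \sqrt{1-e^{-2t}}\,Z$. Since $e^{-t}X_t - X_0 = -(1-e^{-2t})X_0 + e^{-t}\sqrt{1-e^{-2t}}\,Z$, the identity \eqref{eq:tweedie_one} rewrites the relative score as
\begin{align*}
    \nabla \log \frac{q_t^\mu}{\gamma}(X_t) = \E\Bigl[e^{-t} X_0 - \frac{e^{-2t}}{\sqrt{1-e^{-2t}}}\, Z \Bigm| X_t\Bigr]\,.
\end{align*}
Then $\FI(q_t^\mu\mmid\gamma) = \E\|\E[U\mid X_t]\|^2$, where $U \deq e^{-t}X_0 - \frac{e^{-2t}}{\sqrt{1-e^{-2t}}}Z$.

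A crude Jensen bound, $\E\|\E[U\mid X_t]\|^2\le \E\|U\|^2 = e^{-2t}\mom^2(\mu) + \frac{e^{-4t}}{1-e^{-2t}}d$, already gives exactly the claimed inequality. Here the cross term vanishes because $X_0$ and $Z$ are independent and $Z$ is centered. So the first statement follows directly from the Tweedie identity, algebra, and Jensen/Pythagoras for conditional expectation.

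For the second statement, assume $\mom^2(\mu)\le d$. Then the bound is at most $d\bigl(e^{-2t} + \frac{e^{-4t}}{1-e^{-2t}}\bigr) = d\,\frac{e^{-2t}(1-e^{-2t}) + e^{-4t}}{1-e^{-2t}} = \frac{de^{-2t}}{1-e^{-2t}}$. Another route to the same bound is the convex combination identity $\E[U\mid X_t] = e^{-t}\bigl(\E[X_0\mid X_t]\cdot\tfrac{1}{1-e^{-2t}} - \tfrac{e^{-t}X_t}{1-e^{-2t}}\bigr)$, but the computation above is simpler.

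The only delicate step is the algebra: splitting $e^{-t}X_t - X_0$ correctly and pulling out the prefactor $\frac{e^{-t}}{1-e^{-2t}}$. A quick sanity check helps here: for $\mu=\gamma$ the left-hand side is $0$, which is consistent with an upper bound. Integrability is automatic because $\mom(\mu)<\infty$, and smoothness of $q_t^\mu$ for $t>0$ justifies the Tweedie identity.
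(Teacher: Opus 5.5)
Your proof is correct and follows the same route as the paper: Tweedie's identity \eqref{eq:tweedie_one}, Jensen's inequality for the conditional expectation, and a second-moment computation in which the cross term vanishes because $Z$ is centered and independent of $X_0$. The only difference is cosmetic: you absorb the prefactor $\frac{e^{-t}}{1-e^{-2t}}$ into $U$ before applying Jensen, while the paper keeps it outside and expands $\E\|e^{-t}X_t - X_0\|^2$ directly.
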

\begin{proof}
    Let $X_0 \sim \mu$ and $Z \sim \gamma$ be independent, and $X_t = e^{-t}\,X_0 + \sqrt{1-e^{-2t}}\,Z$.
    By Tweedie's identity~\eqref{eq:tweedie_one}
    and Jensen's inequality,
    \begin{align*}
        \FI(q_t^\mu\mmid\gamma)
        &\le \frac{e^{-2t}}{(1-e^{-2t})^2}\, \E[\|e^{-t}\,X_t - X_0 \|^2] \\
        &= \frac{e^{-2t}}{(1-e^{-2t})^2}\, \E[\|e^{-t}\sqrt{1-e^{-2t}}\,Z - (1-e^{-2t})\, X_0 \|^2] \\
        &= \frac{e^{-2t}}{(1-e^{-2t})^2}\,\{(1-e^{-2t})^2\,\mom^2(\mu) + e^{-2t}\,(1-e^{-2t})\,d\} \\
        &= e^{-2t}\,\mom^2(\mu) + \frac{de^{-4t}}{1-e^{-2t}}\,. \qedhere
    \end{align*}
\end{proof}

\subsection{Remaining proofs from Proposition~\ref{prop:fi_lb}}\label{app:fi_lb_proof}

\subsubsection{Proof of \texorpdfstring{\eqref{eq:counterex_0}}{the first estimate}}

For $x \in A_R = [R/(1+t), R/(1+t) + R^{-1}]$, we compute
\begin{align*}
    R - x = R - \frac{R}{(1+t)} + O(R^{-1}) = \frac{tR}{(1+t)} + O(R^{-1}) = \frac{tR}{(1+t)}\,(1 + O(R{^{-2}}))\,,
\end{align*}
where we absorb factors of $t$ into the asymptotic notation. From here it is clear that
\begin{align*}
    W_2^2(\mu,\nu) \leq \int_{A_R}|x-R|^2 \,\mu(\!\dd x) = \frac{t^2R^2}{(1+t)^2}\,(1+o(1))\,\mu(A_R) = \frac{p_R t^2 R^2}{(1+t)^2}\,(1+o(1))\,.
\end{align*}

\subsubsection{Proof of \texorpdfstring{\eqref{eq:counterex_1}}{the second estimate}}

By linearity of the heat semigroup, we have that
\begin{align*}
    \nu_t = \mu_t - \alpha + p_R \beta\,,
\end{align*}
where $\beta \deq \cN(R,t)$ and
\begin{align*}
    \alpha(x) = \int_{A_R} \frac{\exp(-\frac{1}{2t}\,|y-x|^2)}{\sqrt{2\pi t}}\, \mu(\!\dd y)\,.
\end{align*}
Our goal is to obtain a lower bound on 
\begin{align*}
    \Bigl|\log\Bigl(\frac{\nu_t}{\mu_t}\Bigr)'(x)\Bigr|
\end{align*}
which is uniform in $x \in B_R$. Note that
\begin{align*}
    \log\Bigl(\frac{\nu_t}{\mu_t}\Bigr)'(x) = \frac{\nu_t'}{\nu_t}(x) - \frac{\mu_t'}{\mu_t}(x) = \frac{\nu_t'}{\nu_t}(x) + \frac{x}{1+t}\,,
\end{align*}
where the last equality follows directly since $\mu_t = \cN(0,1+t)$. Following the expression for $\nu_t$, we can write
\begin{align*}
    \nu_t'(x) &= \mu_t'(x) - \alpha'(x) + p_R \beta'(x) \\
    &=-\frac{x}{(1+t)}\,\mu_t(x) - \alpha'(x) - \frac{(x-R)}{t}\,p_R\beta(x)\,,
\end{align*}
where 
\begin{align*}
    \alpha'(x) = -\frac{x}{t}\,\alpha(x) + \frac{1}{t}\int_{A_R}y\, (2\pi t)^{-1/2}\exp(-\tfrac{1}{2t}\,(x-y)^2)\, \mu({\rm d}y)\,.
\end{align*}
The bound then follows from the following estimates over $x \in B_R$
\begin{align}\label{eq:claim_app}
    \frac{\mu_t(x)}{p_R\beta (x)} = o(1)\,, \quad \frac{\alpha(x)}{p_R\beta (x)} = o(1)\,, \quad \frac{\nu_t(x)}{p_R\beta (x)} = 1+o(1)\,,
\end{align}
{as well as the estimates for the derivatives}
\begin{align}\label{eq:claim_app_2}
    \frac{\mu_t'(x)}{p_R \beta(x)} = o(1)\,, \qquad \frac{\alpha'(x)}{p_R\beta(x)} = o(1)\,.
\end{align}
Indeed, from \eqref{eq:claim_app} and~\eqref{eq:claim_app_2}, we can conclude that
\begin{align*}
    \frac{\nu_t'(x)}{\nu_t(x)}
    &= \frac{\nu_t'(x)}{p_R \beta(x)}\,(1+o(1))
    = \frac{\beta'(x)}{\beta(x)}\,(1+o(1))
    = \frac{\beta'(x)}{\beta(x)} + o(R^{1/3})
    = -\frac{x-R}{t} + o(R^{1/3})\,,
\end{align*}
whence
\begin{align*}
    \bigl\lvert \frac{\nu_t'(x)}{\nu_t(x)} - \frac{\mu_t'(x)}{\mu_t(x)}\bigr\rvert
    &= \bigl\lvert \frac{x}{1+t} - \frac{x-R}{t} + o(R^{1/3})\bigr\rvert
    = \frac{R}{1+t} + O(R^{1/3})
    = \frac{R}{1+t}\,(1+o(1))\,.
\end{align*}
as desired.

We now prove the estimates in \eqref{eq:claim_app} and~\eqref{eq:claim_app_2}.
First, we calculate
\begin{align}\label{eq:mut_beta}
    \log\frac{\mu_t(x)}{p_R \beta(x)}
    &= -\frac{x^2}{2(1+t)} -\frac{1}{2} \log(2\pi(1+t)) -\log p_R + \frac{|x-R|^2}{2t} + \frac{1}{2} \log(2\pi t)\,,
\end{align}
where it suffices to show that this quantity diverges to $-\infty$ as $R\to\infty$. We first compute $\log p_R$, which is
\begin{align*}
    \log p_R = \log \int_{R/(1+t)}^{R/(1+t) + R^{-1}} (2\pi)^{-1/2}\exp(-x^2/2)\dd x
    = -\frac{R^2}{2(1+t)^2} + O(\log R)\,.
\end{align*}
Substituting this back into \eqref{eq:mut_beta}, and also writing $x \in B_R$ as $x = R + s$ for $|s|\leq R^{1/3}$, we have
\begin{align*}
    \log\frac{\mu_t(x)}{p_R \beta(x)} &= -\frac{x^2}{2(1+t)} + \frac{(x-R)^2}{2t} + \frac{R^2}{2(1+t)^2} + O(\log R) \\
    &= -\frac{(R+s)^2}{2(1+t)} +\frac{R^2}{2(1+t)^2} + \frac{s^2}{2t} + O(\log R) \\
    &= {-\frac{tR^2}{2(1+t)^2} + \frac{s^2}{2t(1+t)} - \frac{Rs}{(1+t)} + O(\log R) \rightarrow -\infty}\,,
\end{align*}
where the limit holds as the dominant term scales like $-R^2$. This proves that for $x \in B_R$
\begin{align*}
\frac{\mu_t(x)}{p_R \beta(x)} = o(1)\,.
\end{align*}
{Moreover, using $\mu_t'(x) = -x\mu_t(x)/(1+t)$, it also readily implies
\begin{align*}
    \frac{\mu_t'(x)}{p_R\beta(x)} = o(1)\,.
\end{align*}
}
We now turn to the asymptotics of
\begin{align*}
    \frac{\alpha(x)}{p_R \beta(x)}
    &= \frac{1}{p_R} \int_{A_R} \exp\Bigl(-\frac{1}{2t}\,(|y-x|^2 - |x-R|^2)\Bigr) \,\mu(\!\dd y)\,.
\end{align*}
Writing $x = R+s$ for $-R^{1/3} \leq s \leq R^{1/3}$ and $y = R/(1+t) + u$ for $u \in [0,R^{-1}]$, we have the bound
\begin{align*}
    |y-x|^2 - |x-R|^2 &= \Bigl(\frac{tR}{1+t} - u\Bigr)\Bigl(\frac{tR}{1+t} - u + 2s\Bigr) \\
    &\geq \Bigl(\frac{tR}{1+t} - R^{-1}\Bigr)\Bigl(\frac{tR}{1+t} - R^{-1} - 2R^{1/3}\Bigr)\,,
\end{align*}
which is strictly positive for $R$ large enough. As a result, 
\begin{align*}
    \frac{\alpha(x)}{p_R \beta(x)} = o(1)\,,
\end{align*}
with exponentially fast decay. These two terms imply that 
\begin{align*}
    \frac{\nu_t(x)}{p_R\beta(x)} = 1 + o(1)\,.
\end{align*}
Note that this last argument can also be used to show that
\begin{align*}
    \Bigl|\frac{\alpha'(x)}{p_R\beta(x)}\Bigr| = o(1)\,.
\end{align*}
{Indeed, since
\begin{align*}
    \alpha'(x) = -\int_{A_R} \frac{x-y}{t}\,\frac{\exp(-\frac{1}{2t}\,|y-x|^2)}{\sqrt{2\pi t}}\,\mu(\!\dd y)\,,
\end{align*}
it holds that $|\alpha'(x)| \lesssim_t R\alpha(x)$ for $x\in B_R$, and the estimate follows since the ratio $\frac{\alpha(x)}{p_R\beta(x)}$ is exponentially small.
}

\subsection{An alternative proof of Theorem~\ref{thm:stab_ent}}\label{sec:proof_old}

Our first step is the following lemma.
\begin{lemma}\label{lem:ov_new}
Suppose $\mu$ satisfies \eqref{theta} with $L_t \defeq L_t(\mu)$ as in \eqref{eq:def-L_t}. For any $\nu \in \cP_2(\R^d)$,\looseness-1
\begin{align*}
\frac{\dd}{\dd t} \bigl(2 L_t^2\, {\mathsf H}(q_t^\nu\mmid q_t^\mu) \bigr)^{1/2} \leq
-\FI(q^\nu_{t}\mmid q^\mu_{t})^{1/2} - \theta_{t}\,\bigl(2 L_{t}^2\, {\mathsf H}(q_{t}^\nu\mmid q_{t}^\mu) \bigr)^{1/2} \,.
\end{align*}
\end{lemma}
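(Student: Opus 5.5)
We differentiate the product $2L_t^2\,\mathsf H(q_t^\nu\mmid q_t^\mu)$ directly, using the product rule. The first ingredient is the time derivative of $L_t^2$. Since $L_t = \exp(\int_t^\infty \theta_s\dd s)$, we have $\frac{\dd}{\dd t} L_t = -\theta_t L_t$, hence $\frac{\dd}{\dd t} L_t^2 = -2\theta_t L_t^2$. The second ingredient is de Bruijn's identity, $\frac{\dd}{\dd t}\mathsf H(q_t^\nu\mmid q_t^\mu) = -\FI(q_t^\nu\mmid q_t^\mu)$. Write $H_t \deq \mathsf H(q_t^\nu\mmid q_t^\mu)$ and $F_t \deq \FI(q_t^\nu\mmid q_t^\mu)$. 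Combining the two gives
\begin{align*}
\frac{\dd}{\dd t}\bigl(2L_t^2 H_t\bigr) = -4\theta_t L_t^2 H_t - 2L_t^2 F_t\,.
\end{align*}

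Next we pass to the square root. On the set where $H_t>0$, with $G_t \deq (2L_t^2H_t)^{1/2}$, the chain rule gives
\begin{align*}
\frac{\dd}{\dd t} G_t = \frac{-4\theta_t L_t^2 H_t - 2L_t^2F_t}{2G_t} = -\theta_t\, G_t - \frac{L_t^2 F_t}{G_t}\,.
\end{align*}
The $\theta_t$-term is already exactly the second term of the claimed bound. It therefore remains to show $\frac{L_t^2F_t}{G_t} \ge F_t^{1/2}$, which is equivalent to $L_t^2 F_t \ge F_t^{1/2}(2L_t^2H_t)^{1/2}$. Squaring, this reads $L_t^4F_t^2 \ge 2L_t^2 H_t F_t$, i.e.\ $H_t \le \frac{L_t^2}{2}F_t$. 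This is precisely the log-Sobolev inequality \eqref{eq:lsi} for $q_t^\mu$. That inequality holds because $q_t^\mu = (\KM^\mu_{t\gets})_\sharp\gamma$ is a Lipschitz pushforward of $\gamma$ with constant $L_t$, by Proposition~\ref{prop:lip_km}. Note that no sign assumption on $\theta_t$ is needed anywhere.

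The main obstacle is regularity, not the algebra. First, the case $H_t = 0$ must be handled. Then $q_t^\nu=q_t^\mu$, and by data processing this persists for all later times, so both sides vanish; the inequality should be read in the integrated/Dini sense there. Second, de Bruijn's identity must be justified for general $\nu\in\cP_2(\R^d)$ at $t>0$. Here the OU smoothing makes all densities $C^\infty$, and the identity is already stated in the paper as a lemma, so we simply invoke it. If needed, one can first prove the bound for a regular approximating sequence $\nu_n$ and pass to the limit in integrated form, using the $W_2$-continuity of relative entropy along the OU flow already used in the proof of Theorem~\ref{thm:stab_ent}.
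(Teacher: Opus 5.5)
Your proposal is correct and follows the paper's proof essentially step for step. Both arguments use the product rule with $\frac{\dd}{\dd t}L_t = -\theta_t L_t$ and de Bruijn's identity, then the chain rule for the square root, and finally the log-Sobolev inequality for $q_t^\mu$ (obtained from the $L_t$-Lipschitz pushforward of $\gamma$) to show $L_t F_t/(2H_t)^{1/2} \ge F_t^{1/2}$. Your extra remarks on the case $H_t = 0$ and on approximation by regular sequences go beyond what the paper writes, but they do not change the argument.
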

\begin{proof}
Writing $\mathsf h_t \defeq 2 L_{t}^2\, {\mathsf H}(q_{t}^\nu\mmid q_{t}^\mu)$ for shorthand, we compute
\begin{align*}
\frac{\dd}{\dd t}\, \mathsf h_t &= 4 L_{t}\, {\mathsf H}(q_{t}^\nu\mmid q_t^\mu)\, \frac{\dd}{\dd t}\, L_{t} + 2L_{t}^2\, \frac{\dd}{\dd t}\, {\mathsf H}(q_{t}^\nu\mmid q_{t}^\mu) \\[0.25em]
&= -2\theta_{t}\, \mathsf h_t - 2 L_{t}^2 \FI(q_{t}^\nu\mmid q_{t}^\mu)\,,
\end{align*}
which follows from the definition of $L_t$ and de Bruijn's identity. By the chain rule, we arrive at\looseness-1
\begin{align*}
\frac{\dd}{\dd t}\, (\mathsf h_t)^{1/2} &= \frac{\frac{\dd}{\dd t}\, \mathsf h_t}{2  \mathsf h_t^{1/2}} = -\theta_{t}\,\mathsf h_t^{1/2} - \frac{L_{t} \FI(q_{t}^\nu\mmid q_{t}^\mu)}{\bigl(2{\mathsf H}(q_{t}^\nu\mmid q_{t}^\mu)\bigr)^{1/2}}\,.
\end{align*}
Using that $q_{t}^\mu = (\mathsf T^{\mu}_{t\gets})_{\sharp}\gamma$ with $\mathsf T^{\mu}_{t\gets}$ being Lipschitz continuous with constant $L_{t}(\mu)$, we deduce that
$q_{t}^\mu$ satisfies the following log-Sobolev inequality:
\begin{align*}
{\mathsf H}(q_{t}^\nu\mmid q_{t}^\mu) \leq \frac{L_{t}^2}{2}\FI(q_{t}^\nu\mmid q_{t}^\mu)\,.
\end{align*}
Altogether, we obtain
\begin{align*}
\frac{\dd}{\dd t}\, (\mathsf h_t)^{1/2} \leq -\theta_{t}\,\mathsf h_t^{1/2} - \FI(q_{t}^\nu\mmid q_{t}^\mu)^{1/2}\,,
\end{align*}
which concludes the proof.
\end{proof}

With this at hand we can now adapt the argument of \cite{otto2000generalization}.

\begin{proof}[Proof of Theorem~\ref{thm:stab_ent}]
For simplicity, we omit the approximation argument, which is the same as in the original proof of Theorem~\ref{thm:stab_ent}, and focus on the main calculation.

Starting from \eqref{eq:stab_start} and subtracting off the inequality from Lemma~\ref{lem:ov_new}, we obtain
\begin{align*}
&\frac{\dd}{\dd t} \bigl[ \|\KM^\mu_{t\gets} - \KM^\nu_{t\gets}\|_{L^2(\gamma)} - (2L^2_{t}\,{\mathsf H}(q^\nu_{t}\mmid q^\mu_{t}))^{1/2} \bigr] \\
&\qquad \qquad \geq -\theta_{t}\,\bigl[\|\KM^\mu_{t\gets} - \KM^\nu_{t\gets}\|_{L^2(\gamma)} - (2L^2_{t}\,{\mathsf H}(q^\nu_{t}\mmid q^\mu_{t}))^{1/2}\bigr]\,.
\end{align*}
A single application of Gr\"onwall's inequality then yields
\begin{align*}
    &\|\KM^\mu_{T\gets}-\KM^\nu_{T\gets}\|_{L^2(\gamma)} - \sqrt{2}L_{T}\,{\mathsf H}(q^\nu_{T} \mmid q^\mu_{T})^{1/2} \\
    &\qquad \qquad \geq \exp\Bigl(-\int_0^T \theta_t\dd t\Bigr)\,\bigl( \|\KM^\mu -\KM^\nu\|_{L^2(\gamma)} - \sqrt{2}L_{0}\,{\mathsf H}(\nu\mmid \mu)^{1/2} \bigr)\,.
\end{align*}
To conclude, we take $T \rightarrow \infty$ on both sides. By assumption, $\lim_{T\to\infty }L_T < +\infty$ while $q_T^\nu,q_T^\mu \to \gamma$, and thus the left-hand side vanishes; the resulting bound is
\begin{align*}
\|\mathsf T^\mu - \mathsf T^\nu\|_{L^2(\gamma)} - \sqrt{2}L_0(\mu)\, {\mathsf H}(\nu\mmid\mu)^{1/2} \leq 0\,.
\end{align*}
Rearranging concludes the proof.
\end{proof}

\end{document}